\numberwithin{equation}{section}
\newtheorem{theorem}{Theorem}[section]
\newtheorem{proposition}[theorem]{Proposition}
\newtheorem{lemma}[theorem]{Lemma}
\newtheorem{corollary}[theorem]{Corollary}
\newtheorem{problem}[theorem]{Problem}
\newtheorem{question}[theorem]{Question}
\newtheorem*{theorem*}{Theorem}
\theoremstyle{definition}
\newtheorem{definition}[theorem]{Definition}
\newtheorem{example}[theorem]{Example}
\newtheorem{remark}[theorem]{Remark}
\newcommand{\Ab}{ \ensuremath{\mathbb{A}}}
\newcommand{\FF}{ \ensuremath{\mathbb{F}}}
\newcommand{\NN}{ \ensuremath{\mathbb{N}}}
\newcommand{\ZZ}{ \ensuremath{\mathbb{Z}}}
\newcommand{\CC}{ \ensuremath{\mathbb{C}}}
\newcommand{\QQ}{ \ensuremath{\mathbb{Q}}}
\newcommand{\RR}{ \ensuremath{\mathbb{R}}}
\newcommand{\PP}{ \ensuremath{\mathbb{P}}}
\newcommand{\KK}{ \ensuremath{\mathbb{K}}}
\newcommand{\GL}{{GL}_r (K)}
\newcommand{\Gin}{\ensuremath{\mathrm{Gin}}}
\newcommand{\gin}{\ensuremath{\mathrm{gin}}}
\newcommand{\GIN}{\ensuremath{\mathrm{GIN}}}
\newcommand{\Gins}{\ensuremath{\mathrm{Gins}}}
\newcommand{\gins}{\ensuremath{\mathrm{gins}}}
\newcommand{\Lex}{\ensuremath{\mathrm{Lex}}}
\newcommand{\dele}[1]{\ensuremath{\Delta^e({#1})}}
\newcommand{\dels}[1]{\ensuremath{\Delta^s({#1})}}
\newcommand{\delc}[1]{\ensuremath{\Delta^c({#1})}}
\newcommand{\Shift}{\ensuremath{\mathrm{Shift}}}
\newcommand{\cE}{\mathcal{E}}
\newcommand{\mA}{\mathcal{A}}
\newcommand{\mB}{\mathcal{B}}
\newcommand{\mC}{\mathcal{C}}
\newcommand{\mD}{\mathcal{D}}
\newcommand{\mH}{\mathcal{H}}
\newcommand{\mI}{\mathcal{I}}
\newcommand{\mJ}{\mathcal{J}}
\newcommand{\mO}{\mathcal{O}}
\newcommand{\mP}{\mathcal{P}}
\newcommand{\mS}{\mathcal{S}}
\newcommand{\mV}{\mathcal{V}}
\DeclareMathOperator{\Jac}{Jac}
\DeclareMathOperator{\coker}{coker}
\newcommand{\HH}{H_{\mathfrak m}}
\newcommand{\init}{\ensuremath{\mathrm{in}}\hspace{1pt}}
\newcommand{\lex}{{\mathrm{lex}}}
\newcommand{\rlex}{{\mathrm{{rlex}}}}
\newcommand{\rev}{{\mathrm{{rev}}}}
\newcommand{\rank}{\ensuremath{\mathrm{rank}}\hspace{1pt}}
\newcommand{\reg}{\ensuremath{\mathrm{reg}}\hspace{1pt}}
\newcommand{\depth}{\ensuremath{\mathrm{depth}}\hspace{1pt}}
\newcommand{\Tor}{\ensuremath{\mathrm{Tor}}\hspace{1pt}}
\newcommand{\Hom}{\ensuremath{\mathrm{Hom}}\hspace{1pt}}
\newcommand{\Ext}{\ensuremath{\mathrm{Ext}}\hspace{1pt}}
\newcommand{\Ker}{\ensuremath{\mathrm{Ker}}\hspace{1pt}}
\newcommand{\Image}{\ensuremath{\mathrm{Im}}\hspace{1pt}}
\newcommand{\aaa}{\mathbf{a}}
\newcommand{\ee}{\mathbf{e}}
\newcommand{\pdim}{ \ensuremath{\mathrm{proj\ dim}}\hspace{1pt}}
\newcommand{\Hilb}{\mathrm{Hilb}}
\newcommand{\lk}{{\mathrm{lk}}}
\newcommand{\st}{\mathrm{st}}
\renewcommand{\labelenumi}{\arabic{enumi}.  }
\renewcommand{\labelenumii}{(\alph{enumii})}
\newcommand{\mideal}{\ensuremath{\mathfrak{m}}}
\newcommand\meta[1]{{\ \newline\llap{\Huge$\rightarrow$\hskip.75ex}\small\upshape\bfseries$\langle\!\langle\,$#1$\,\rangle\!\rangle$\ \newline}}
\definecolor{MyDarkGreen}{cmyk}{0.7,0,1,0}
\def\cocoa{{\hbox{\rm C\kern-.13em o\kern-.07em C\kern-.13em o\kern-.15em A}}}
\newcommand{\MM}{\mathcal{M}}
\begin{document}

\title[Unexpected hypersurfaces and where to find them]{Unexpected hypersurfaces and where to find them}

\author{B.\ Harbourne}
\address{Department of Mathematics\\
University of Nebraska\\
Lincoln, NE 68588-0130 USA}
\email{bharbourne1@unl.edu}

\author{J.\ Migliore} 
\address{Department of Mathematics \\
University of Notre Dame \\
Notre Dame, IN 46556 USA}
 \email{migliore.1@nd.edu}

\author{U.\ Nagel}
\address{Department of Mathematics\\
University of Kentucky\\
715 Patterson Office Tower\\
Lexington, KY 40506-0027 USA}
\email{uwe.nagel@uky.edu}

\author{Z.\ Teitler}
\address{Department of Mathematics\\
Boise State University\\
1910 University Drive\\
Boise, ID 83725-1555, USA}
\email{zteitler@boisestate.edu}

\begin{abstract} 
In the paper \cite{CHMN} by Cook, et al., which introduced the concept of unexpected plane curves,
the focus was on understanding the geometry of the curves themselves.
Here we expand the definition to hypersurfaces of any dimension
and, using constructions which appeal to algebra, geometry, representation 
theory and computation, we obtain a coarse but complete classification of 
unexpected hypersurfaces. In particular, 
we determine each $(n,d,m)$ for which there is some finite set of points $Z\subset\PP^n$
with an unexpected hypersurface of degree $d$ in $\PP^n$ having a 
general point $P$ of multiplicity $m$. 
Our constructions also give new insight into the interesting question of where to look for such $Z$. Recent 
work of Di Marca, Malara and Oneto \cite{DMO} and of Bauer, Malara, Szemberg and Szpond \cite{BMSS} give new results and 
examples in $\PP^2$ and  $\PP^3$. We obtain our main results using a new construction of unexpected hypersurfaces
involving cones.  
This method applies in $\PP^n$ for $n \geq 3$ and gives a broad range of examples, which we link to certain failures of the 
Weak Lefschetz Property. We also give constructions using root systems, both in $\PP^2$ and $\PP^n$ 
for $n \geq 3$. Finally, we explain an observation of \cite{BMSS}, showing that the unexpected curves of \cite{CHMN} are in 
some sense dual to their tangent cones at their singular point.
\end{abstract}

\date{December 19, 2018}

\thanks{
{\bf Acknowledgements}: Harbourne was partially supported by Simons Foundation grant \#524858.
Migliore was partially supported by Simons Foundation grant \#309556.
Nagel was partially supported by Simons Foundation grant \#317096.
Teitler was partially supported by Simons Foundation grant \#354574.
We thank M.\ Dyer for suggesting that we look at root systems and for 
bringing to our attention the $H_3$ and $H_4$ root systems. We thank Tomasz Szemberg and Justyna Szpond for 
helpful comments. 
And we thank Boise State University, where some of the work
on this paper was done.}

\keywords{fat points, line arrangements, hyperplane arrangements, linear systems, root systems, reflection groups, supersolvable,
stable vector bundle, splitting type}

\subjclass[2010]{14N20 (primary); 13D02, 14C20, 14N05, 05E40,  14F05 (secondary)}

\maketitle



\section{Introduction}

The paper \cite{CHMN} defined the concept of unexpected plane curves and characterized their geometry.
Given a finite set of points $Z\subset\PP^n$ with $n=2$, we say that plane curves of degree $d=m+1$ 
containing $Z$ and having a general point of multiplicity $m$ are {\it unexpected} when such curves exist
but the conditions imposed by vanishing to order $m$ at $P$ on the vector space of forms of degree $d$ 
vanishing on $Z$ are not independent.

Here we broaden the definition in a natural way by allowing $n\geq2$ and $d\geq m$ and we ask
for a coarse classification of unexpected {\it hypersurfaces}. I.e., we ask for which
$(n,d,m)$ there is a finite set of points $Z\subset \PP^n$ for which there is an unexpected hypersurface
of degree $d$ containing $Z$ with a general point of multiplicity $m$. We give a complete
answer to this question in Theorem \ref{mainThm}.

To describe the problem and our approach to it in a more precise way, let $\KK$ be an algebraically closed field of characteristic 0. 
Let $R=\KK[\PP^n]=\KK[x_0,\ldots,x_n]$ be the homogeneous coordinate ring of 
$n$-dimensional projective space.
Consider a general point $P\in\PP^n$.
The fat point scheme $X=mP$ is the scheme defined by the homogeneous ideal
$I_X=(I_{P})^{m} \subset R$, where $I_{P}$ is the ideal generated by all forms that vanish at $P$.
Given a homogeneous ideal $I\subseteq R$, we denote by $[I]_d$ the $\KK$-vector space spanned by homogeneous forms in 
$I$ of degree $d$. It is well known and easy to show that $\dim_\KK[I_X]_d=\max(0,\binom{n+d}{n}-\binom{m-1+n}{n})$.
Given distinct points $Q_i\in\PP^n$, we define $Z=Q_1+\cdots +Q_s$ to be the subscheme defined
by the ideal $I_Z=\cap_jI_{Q_j}\subset R$. 

We say that $Z\subset\PP^n$ {\em admits an unexpected hypersurface} with respect to $X$ of degree $d$ if
$$\dim [I_Z\cap I_X]_d > \max \left \{ 0, \dim [I_Z]_d-\binom{m-1+n}{n} \right \}.$$
That is, $Z$ admits an unexpected hypersurface with respect to $X$ of degree $d$ if the conditions imposed by $X$
on forms of degree $d$ vanishing on $Z$ are not independent. We will also sometimes say that $Z$ admits an unexpected 
hypersurface with a general point $P$ of multiplicity~$m$.

In \cite{CHMN} a version of the following problem was posed for $n=2$:

\begin{problem}\label{prob:intro}
Characterize and then classify all  quadruples $(n,d,m,Z)$
where $Z\subset\PP^n$ admits an unexpected 
hypersurface of degree $d$ with a general point $P$ of multiplicity $m$.
\end{problem}

As a means for approaching this problem, and motivated by an example in \cite{DIV}, the paper \cite{CHMN} gave a 
careful analysis for $n=2$ of the structure of unexpected hypersurfaces (hence curves) when $d=m+1$.
Results of \cite{DIV} and \cite{FV} show that it is useful in this situation to regard a point set $Z$ as the 
points $Z_\mA$ dual to a line arrangement $\mA$.

Given an arrangement $\mA$ of lines in $\PP^2$, the results of \cite{CHMN} provide a means for determining whether
the reduced scheme $Z_\mA$ of points dual to the lines admits an unexpected curve using invariants of $\mA$, but it is still very unclear
which line arrangements $\mA$ to look at.
One of the best results in this special case is that of \cite{DMO}, completely characterizing the supersolvable (see definition in \S\ref{root system section}) line arrangements $\mA$
such that the points $Z_\mA$ admit an unexpected curve. 
Both supersolvable and non-supersolvable line arrangements were  studied in \cite{CHMN}, and the latter can 
also give rise to unexpected curves, but it is not clear which ones do. 

Moreover, the only examples heretofore in the literature
of unexpected hypersurfaces with an imposed singularity at a single general point in $\PP^n$ for $n>2$ 
that we are aware of are examples given in \cite{BMSS}.
But given that they do occur, it is a natural and interesting next step to work to understand the range of examples of unexpected hypersurfaces that can occur
with an imposed singularity at a single general point in $\PP^n$,
both in dimension 2 and in higher dimensions,
and to find structural connections between the geometry of a reduced finite set of points $Z$ in $\PP^n$ for $n\geq 2$
and the existence of such an unexpected hypersurface. 

In \S\ref{cone section} we show that a large class of examples is related to $Z$ lying on projective cones 
over codimension 2 subvarieties. It ends with an application to the question of when ideals generated by 
powers of linear forms fail the Weak Lefschetz Property.
In \S\ref{root system section} we show that root systems can give rise to real point sets $Z$
admitting unexpected hypersurfaces. There were already indications
 in \cite{CHMN} (see also \cite{I}) that hyperplane arrangements related to reflection groups sometimes 
give rise to unexpected hypersurfaces (see what was called the Fermat, Klein and Wiman arrangements in \cite{CHMN};
these all come from complex reflection groups). We reinforce these indications here by finding additional
examples coming from root systems of real reflection groups. These have the advantage
of providing obvious candidates in higher dimension too.
However, not all root systems seem to give rise to unexpected hypersurfaces;
it would be an interesting project to understand what is special about those that do. 
In \S\ref{BMSSduality} we present initial results regarding a still mysterious duality between 
unexpected hypersurfaces having an imposed general singular point $P$,
and their tangent cone at $P$, first observed in \cite{BMSS}. Finally, in \S\ref{open problems} we present some open questions arising from our work.

By applying our results from \S\ref{cone section},
we obtain the following theorem, one of the main consequences of our work in this paper.
See Theorem \ref{m<d} for the proof.

\begin{theorem}\label{mainThm}
Given positive integers $(n,d,m)$ with $n>1$, there exists an unexpected hypersurface for some finite set of points 
$Z \subset  \PP^n$ of degree $d$ with a general point of multiplicity $m$ if and only if one of the following conditions holds:
\begin{enumerate}[label={(\roman*)}]
\item[(i)] $n=2$ and $(d,m)$ satisfies $d > m > 2$; or
\item[(ii)] $n \ge 3$ and $(d,m)$ satisfies $d \geq m \geq 2$.
\end{enumerate}
\end{theorem}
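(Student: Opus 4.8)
The plan is to prove the two implications separately. For necessity I would isolate obstructions that hold in every dimension and then dispose of the two boundary cases peculiar to $n=2$. If $d<m$, any nonzero form of degree $d$ has multiplicity at most $d<m$ at every point, so $[I_X]_d=0$ and hence $[I_Z\cap I_X]_d=0$, which can never exceed the nonnegative expected value; thus $d\ge m$ is forced. If $m=1$, vanishing at a general point $P$ is a single linear functional on $[I_Z]_d$ that is nonzero whenever $[I_Z]_d\ne 0$ (a nonzero form does not vanish at a general point), so the one condition is always independent and nothing is unexpected; thus $m\ge 2$. Together these give $d\ge m\ge 2$, which is precisely condition (ii) for $n\ge 3$.

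The conceptual core I would use, which simultaneously settles the $n=2$ boundary and motivates the construction, is the identification in the case $d=m$ of $[I_Z\cap I_X]_d$ with the space of degree-$m$ forms through the projection $\pi_P(Z)\subset\PP^{n-1}$ of $Z$ from $P$: a degree-$m$ form vanishing to order $m$ at $P$ is exactly a cone with vertex $P$, i.e.\ a form in the variables complementary to $P$, and such a cone contains $Z$ if and only if its base contains $\pi_P(Z)$. Hence an unexpected hypersurface with $d=m$ exists if and only if $\pi_P(Z)$ fails to impose independent conditions on degree-$m$ forms for general $P$. When $n=2$ the base is $\PP^1$, where distinct points always impose independent conditions on binary forms, and for general $P$ the projected points are distinct; a short comparison then shows $\dim[I_Z\cap I_X]_m$ never beats the expected value, so no unexpected curve with $d=m$ exists. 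Combining this with the result of \cite{CHMN} that $m\ge 3$ is necessary in the plane rules out $m=2$ as well, leaving exactly $d>m>2$, which is condition (i).

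For sufficiency with $n\ge 3$ the same projection principle becomes a construction: since the base $\PP^{n-1}$ now has dimension at least $2$, its point sets \emph{can} be special, and placing $Z$ on a cone over a codimension-$2$ subvariety forces $\pi_P(Z)$ to lie on extra degree-$m$ hypersurfaces for every general $P$, producing the strict inequality. This is the content of \S\ref{cone section} and yields the case $d=m$. To reach $d>m$ while preserving multiplicity $m$ at $P$, I would multiply such a cone by a general factor of degree $d-m$, which does not vanish at $P$, and check that the count remains strict; \S\ref{cone section} carries this out across the whole range $d\ge m\ge 2$. For $n=2$ and $d>m>2$ the required $Z$ come from the real reflection arrangements of \S\ref{root system section} together with the supersolvable examples of \cite{DMO} and \cite{CHMN}, extending the classical $d=m+1$ configurations to all $d>m$.

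The main obstacle is the sufficiency direction, since one must realize \emph{every} admissible triple rather than isolated examples. The delicate point is to control $\dim[I_Z]_d$ and the rank of the conditions imposed by $mP$ at the same time, so that
$$\dim[I_Z\cap I_X]_d>\max\left\{0,\ \dim[I_Z]_d-\binom{m-1+n}{n}\right\}$$
stays strict after the degree-raising maneuvers, namely the extra degree-$(d-m)$ factor for $n\ge 3$ and the passage beyond $d=m+1$ for $n=2$. Guaranteeing that this gap survives uniformly over the parameter region is exactly where the constructions of \S\ref{cone section} and \S\ref{root system section} do the real work.
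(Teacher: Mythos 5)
Your outline follows the paper's own route in almost every respect: the trivial necessity of $d\ge m\ge 2$, the exclusion of $d=m$ in the plane via the observation that a degree-$m$ curve with an $m$-fold point at a general $P$ is a cone (your projection to $\PP^1$ is a reformulation of the B\'ezout argument in the proof of Theorem~\ref{m<d}, and is in fact stated a bit more completely there than here), the appeal to known results to exclude $m\le 2$ in the plane, the cone construction of \S\ref{cone section} over codimension-two varieties for $n\ge 3$, and the planar $d=m+1$ examples coming from $B_3$, supersolvable and Fermat-type arrangements (\cite{DIV}, \cite{DMO}, \cite{CHMN}). Two smaller points: the characteristic-zero exclusion of $m\le 2$ is due to Akesseh \cite{A} and \cite{FGST}, not \cite{CHMN}, and (exactly as in the paper) it is only invoked for $d=m+1$; also, your ``if and only if'' identifying unexpectedness at $d=m$ with failure of $\pi_P(Z)$ to impose independent conditions is only valid when $Z$ itself imposes independent conditions in $\PP^n$ --- the implication you actually use for $n=2$ is the correct one.

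The genuine gap is in the degree-raising step, which is precisely where the theorem needs input beyond the cases $d=m$ and $d=m+1$. You propose to keep $Z$ fixed and ``multiply such a cone by a general factor of degree $d-m$,'' then ``check that the count remains strict.'' It does not remain strict: the expected dimension is $\max\bigl\{0,\dim[I_Z]_d-\binom{m-1+n}{n}\bigr\}$, and for $d>m$ the space $[I_Z]_d$ is far larger than the multiples of the cone. Concretely, take $Z_0$ the nine $B_3$-dual points in $\PP^2$ with their unexpected quartic ($m=3$): at $d=5$ the expected dimension is $\dim[I_{Z_0}]_5-6\ge 6$, while multiplying the quartic by linear forms produces only a $3$-dimensional subspace, so multiplication alone establishes nothing. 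What the paper actually does (Corollary~\ref{add S} for $n\ge 3$, and its planar analogue inside the proof of Theorem~\ref{m<d}) is to \emph{enlarge} $Z$: one adds enough points of an auxiliary hypersurface $S$ of degree $d-m$ (in the plane, a curve $A$ of degree $d-m-1$) so that $S$ becomes a fixed component of the linear system, forcing $[I_Z]_d=F_S\cdot[I_V]_m$ (resp.\ $F_A\cdot[I_{Z_0}]_{m+1}$); only then does the expected count collapse to the already-settled lower-degree count, and $S_P\cup S$ is the unexpected hypersurface. The same issue afflicts your claim that for $n=2$ the root-system and supersolvable examples ``extend'' to all $d>m$: those examples all have $d=m+1$, and passing to $d>m+1$ requires exactly this enlargement of $Z$, not a change of examples. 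Since realizing \emph{every} admissible pair $(d,m)$ is the main burden of the sufficiency direction, the enlargement of $Z$ is an essential missing idea rather than a routine verification.
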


It is still very unclear what kinds of unexpected hypersurfaces can occur for each $d$ and $m$.
One goal of this paper was to suggest new venues for where to find them. 
The title of this paper should not, however, be taken to mean that we have found all unexpected hypersurfaces (a title for a possible
future paper could be ``Unexpected hypersurfaces and where {\it else} to find them"). 
In addition, in contrast to what \cite{CHMN} was able to do in $\PP^2$,
there are not yet good tools in higher dimension for rigorously verifying unexpectedness.
In particular, we are able to give rigorous verifications of unexpectedness for the new examples coming from
root systems only in some of the cases where we suspect that they occur.

\subsection*{Notation}

For any subvariety (or subscheme) $V \subseteq \PP^n$ we write $I_V \subseteq R$ for the saturated ideal of $V$
and $\mathcal{I}_V$ for the sheaf on $\PP^n$ corresponding to $I_V$.
For any integer function $h : \ZZ_{\geq 0} \to \ZZ$ the first difference $\Delta h$ is
the backward difference $\Delta h(t) = h(t)-h(t-1)$, where we make the convention $h(-1)=0$ (so $\Delta h(0) = h(0)$).


\section{Cones} \label{cone section}

In this section we give a method for constructing examples of varieties $Z$ (not necessarily points) with unexpected hypersurfaces.
Although by far the more interesting question is the problem of understanding the unexpected hypersurfaces arising from a finite set 
of points, one can also begin by asking whether a reduced, non-degenerate {\em curve} in $\PP^3$ admits unexpected surfaces. We 
obtain the somewhat surprising fact that they always do! Using B\'ezout's theorem we then translate this back to finite sets of points. 
We also extend this idea to $\PP^n$. Finally, we find a connection to the well-studied question of when an ideal generated by powers 
of linear forms has the Weak Lefschetz Property, extending results of \cite{DIV} who first noticed a connection between cones and WLP.

Our method involves cones.
By a cone with vertex $P$ we mean a scheme $X$ such that for every point $Q$ in $X$ the line joining $P$ and $Q$ is in $X$.
In particular, by B\'ezout, every hypersurface of degree $d$ with a point of multiplicity $d$ at a point $P$ is a cone with vertex $P$.

It is not hard to show that a plane curve of degree $d$ in $\PP^3$ does not admit an unexpected hypersurface with a point of 
multiplicity $d$ -- instead, a point of multiplicity $d$ imposes the expected number of conditions on hypersurfaces of degree $d$ 
containing the plane curve (use the fact that a plane curve in $\PP^3$ is a complete intersection,
and the known Hilbert function for complete intersections). For non-degenerate curves the situation is very different, as we now show.

\begin{proposition} \label{cone in P3}
Let $C$ be a reduced, equidimensional, non-degenerate curve of degree $d$ in $\PP^3$ ($C$ may be reducible, disconnected, and/or singular 
but note that $d \geq 2$ since $C$ is non-degenerate, with $C$ being two skew lines if $d=2$). Let $P \in \PP^3$ be a general point. 
Then the cone $S_P=S_P(C)$ over $C$ with vertex $P$ is an unexpected surface of degree $d$ for $C$ with multiplicity $d$ at $P$. It is the unique unexpected surface of this degree and multiplicity.
\end{proposition}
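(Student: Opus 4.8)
The plan is to fix coordinates with $P = [0:0:0:1]$, take $x_0,x_1,x_2$ to be a basis of the linear forms vanishing at $P$, and take $x_3$ to be a general linear form with $x_3(P) \neq 0$. First I would record the B\'ezout observation already noted above: a form $F$ of degree $d$ has multiplicity $d$ at $P$ if and only if it is a cone with vertex $P$, i.e.\ $F \in \KK[x_0,x_1,x_2]$. Indeed, writing $F = \sum_{i=0}^d x_3^i F_{d-i}(x_0,x_1,x_2)$ and dehomogenizing at $P$, the multiplicity condition forces $F_0 = \cdots = F_{d-1} = 0$. Let $\pi = \pi_P \colon \PP^3 \dashrightarrow \PP^2$ be projection from $P$; for general $P$ this restricts to a birational morphism $C \to C' := \pi(C)$ onto a reduced plane curve of degree $d$, defined by a squarefree form $f' \in \KK[x_0,x_1,x_2]$ of degree $d$. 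Then $f := \pi^* f'$ cuts out the cone $S_P$, has degree $d$, lies in $\KK[x_0,x_1,x_2]$ (so has multiplicity $d$ at $P$), and vanishes on $C$ because $f'$ vanishes on $C'$. This exhibits a nonzero element of $[I_C \cap I_X]_d$ with $X = dP$, proving existence.

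For uniqueness, suppose $g \in [I_C \cap I_X]_d$. By the observation above $g$ is a cone, $g = \pi^* g'$ with $g' \in \KK[x_0,x_1,x_2]$ of degree $d$; since $g$ vanishes on $C$, $g'$ vanishes on $C' = \pi(C)$, so $g' \in [I_{C'}]_d$. As $C'$ is a reduced plane curve, $I_{C'} = (f')$ and hence $[I_{C'}]_d = \KK\, f'$ is one-dimensional. Therefore $g$ is a scalar multiple of $f$, so $\dim_\KK [I_C \cap I_X]_d = 1$; this is exactly the asserted uniqueness of $S_P$.

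It remains to verify unexpectedness, namely $1 = \dim[I_C \cap I_X]_d > \max\{0, \dim[I_C]_d - \binom{d+2}{3}\}$. Since $\dim[I_C\cap I_X]_d = 1$, this is equivalent to $\dim[I_C]_d \leq \binom{d+2}{3}$, i.e.\ to the lower bound $h_C(d) \geq \binom{d+2}{2}$ on the Hilbert function $h_C(t) = \dim_\KK(R/I_C)_t$. This inequality is the crux of the argument. Because $C$ is reduced and equidimensional, a general linear form---which I may take to be $x_3$---is a nonzerodivisor on $R/I_C$, so $\Delta h_C(t) = h_{R/(I_C + (x_3))}(t) \geq h_\Gamma(t)$, where $\Gamma = C \cap V(x_3)$ is a set of $d$ reduced points in $\PP^2$ and the inequality comes from the surjection $R/(I_C+(x_3)) \twoheadrightarrow R/I_\Gamma$. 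For any $d$ points one has $h_\Gamma(t) \geq \min(t+1, d)$, while non-degeneracy of $C$ gives the strict improvement $\Delta h_C(1) = h_C(1) - 1 = 3$ in degree $1$. Summing the telescoping series using $\Delta h_C(0) = 1$, $\Delta h_C(1) = 3$, and $\Delta h_C(t) \geq \min(t+1,d)$ for $2 \leq t \leq d$ yields exactly $h_C(d) \geq \binom{d+2}{2}$.

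The main obstacle is this last lower bound: the naive estimate $\Delta h_C(t) \geq h_\Gamma(t)$ alone falls one short of $\binom{d+2}{2}$, and it is precisely the non-degeneracy of $C$ (entering as $h_C(1) = 4$, hence $\Delta h_C(1) = 3$ rather than the $h_\Gamma(1) = 2$ that two reduced points would give) that supplies the missing $+1$. One should also be careful to justify the standard inputs: that a general linear form is a nonzerodivisor on the reduced curve $C$, that $h_\Gamma(t) \geq \min(t+1,d)$ for $d$ points, and that projection from a general $P$ is birational onto a reduced degree-$d$ plane curve. With $h_C(d) \geq \binom{d+2}{2}$ in hand, the expected dimension is $0$, so $\dim[I_C\cap I_X]_d = 1 > 0$ establishes that $C$ admits the unexpected surface $S_P$.
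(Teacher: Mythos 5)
Your proof is correct, and its core---the unexpectedness bound---takes a genuinely different, more elementary route than the paper's. On existence and uniqueness the two arguments share the same B\'ezout idea (a degree-$d$ form with a point of multiplicity $d$ at $P$ is a cone with vertex $P$): the paper forces every line joining $P$ to a point of $C$ into the surface, whereas you project from $P$ and use that the reduced image $C'=\pi(C)\subset\PP^2$ has principal radical ideal, so $[I_{C'}]_d=\KK f'$; these are equivalent, though your version leans on the (standard, and correctly flagged) facts that $\pi|_C$ is birational for general $P$ and that distinct components of $C$ have distinct images, so that $C'$ really is reduced of degree exactly $d$. The genuine divergence is in proving unexpectedness. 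The paper establishes the genus inequality $g_C<g_D=\binom{d-1}{2}$ (its Claim 1) and must then convert it into the bound $\dim[I_C]_d\le\binom{d+2}{3}$, which forces it to invoke Gruson--Lazarsfeld--Peskine for $H^1(\mathcal I_C(d))=H^2(\mathcal I_C(d))=0$---a substantial external input. You stay entirely inside Hilbert functions in degrees $\le d$: $\Delta h_{R/I_C}(t)\ge h_\Gamma(t)\ge\min(t+1,d)$ for a general hyperplane section $\Gamma$, with the crucial extra $+1$ supplied by $\Delta h_{R/I_C}(1)=3$ from non-degeneracy, summing to $h_{R/I_C}(d)\ge\binom{d+2}{2}$, i.e.\ $\dim[I_C]_d\le\binom{d+2}{3}$; no cohomology or regularity theorem is needed, and your telescoping sum does close correctly to $\binom{d+2}{2}$. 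Your use of non-degeneracy is also cleaner: the paper's Claim 1 derives strictness from $h_{\bar R/I_{\Gamma|H}}(1)>h_{\bar R/I_A}(1)$, which is actually an equality when $d=2$ (both sections are just two points), whereas your $\Delta h_{R/I_C}(1)=3>2$ works uniformly for all $d\ge 2$. What the paper's heavier route buys is the genus inequality itself and the exact count $\dim[I_C]_k=\binom{k+3}{3}-[dk-g_C+1]$, both of which are reused later (in the proof of Corollary \ref{higher deg}); your inequality suffices for the present proposition but would not by itself support that later computation.
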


\begin{proof}

We first check uniqueness.  Let $F$ be a form defining a surface containing $C$, of degree $d$ with 
multiplicity $d$ at $P$. Let $\lambda$ be a line through $P$ and any point $Q$, of $C$. Then  by 
B\'ezout, $F$ must vanish on all of $\lambda$. Thus the surface defined by $F$ is precisely $S_P$.

We now check unexpectedness. Let $D$ be a smooth plane curve of degree $d$. 

\medskip

\noindent \underline{Claim 1}: {\em The arithmetic genus, $g_C$, of $C$ is strictly less than that of $D$, which is $g_D = \binom{d-1}{2}$. }

\medskip

This argument is classical. Much of it is given in \cite{harris} (when C is irreducible) and in \cite{migbook} Proposition 1.4.2, so no claim is made to originality; we include it here just for the reader's convenience.
Let $\Gamma$ be a general hyperplane section of $C$ by a hyperplane $H$ defined by a general linear form $L$. Let $I_{\Gamma | H}$ be the saturated ideal of $\Gamma$ in $H$. Let $\ell \gg 0$. Then
\[
\begin{array}{rcl}
d\ell - g_C + 1 & = & h^0(\mathcal O_C(\ell)) \\
& = & h_{R/I_C}(\ell)
\end{array}
\]
where $h_{R/I_C}(t)$ is the Hilbert function of $C$. On the other hand, for any integer $t$ we have the exact sequence
\[
\begin{array}{ccccccccccccccccccc}
0 & \rightarrow & [I_C]_{t-1} & \rightarrow & [I_C]_t & \rightarrow & [I_{\Gamma|H} ]_t & \longrightarrow & H^1(\mathcal I_C(t-1)) & \rightarrow & \dots \\
&&&&&&\hbox{\ \ \ \ \ \ \ } \searrow & & \nearrow \hbox{\ \ \ \ \ \ \ \ \ \ \ \ } \\
&&&&&&& [K]_t \\
&&&&&&\hbox{\ \ \ \ \ \ \ } \nearrow & & \searrow \hbox{\ \ \ \ \ \ \ \ \ \ \ \ } \\
&&&&&&\hbox{\  } 0 & & 0 \hbox{\ \ \ \ \ \  } \\
\end{array}
\]
(where $K$ is just the cokernel). Then after adding and subtracting some binomial coefficients and setting $\bar R = R/L$, we obtain
\[
\begin{array}{rcl}
\Delta h_{R/I_C} (t) & = & h_{\bar R/I_{\Gamma|H}}(t) + \dim [K]_t \\ 
& \geq & h_{\bar R/I_{\Gamma | H}} (t).
\end{array}
\]
So since $\ell \gg 0$ we obtain
\[
\begin{array}{rcl}
g_C & = & 1 + d\ell - h_{R/I_C}(\ell) \\
& = & d\ell - \sum_{t=1}^\ell \Delta h_{R/I_C}(t) \\
& \leq & d\ell - \sum_{t=1}^\ell  h_{\bar R/I_{\Gamma|H}} (t) \\
& = & \sum_{t=1}^\ell \left [ d - h_{\bar R/I_{\Gamma | H}} (t) \right ].
\end{array}
\]
Now replace $C$ by $D$, and replace $\Gamma$ by the hyperplane section of $D$, which is a set of $d$ collinear points, say $A$.
We have, similarly,
\[
  g_D = \sum_{t=1}^\ell \left [ d - h_{\bar R/I_A}(t) \right ] = \binom{d-1}{2} .
\]
It is clear that for any $t \geq 1$ we have 
\[
h_{\bar R/I_{\Gamma|H}} (t) \geq h_{\bar R/I_A}(t)
\]
with strict inequality for $t=1$, 
so we obtain $g_C < g_D$. This completes the proof of Claim 1.

Now, by \cite{GLP} Remark (1) (p. 497), we have $H^1(\mathcal I_C(d)) = H^2(\mathcal I_C(d)) = 0$, and we also have $H^1(\mathcal I_D(d)) = H^2(\mathcal I_D(d)) = 0$.
Consider the exact sequence
\[
0 \rightarrow [I_C]_d \rightarrow H^0(\mathcal O_{\PP^3}(d)) \rightarrow H^0(\mathcal O_C(d)) \rightarrow H^1(\mathcal I_C(d)) \rightarrow 0.
\]

\medskip

\noindent \underline{Claim 2}:
{\em Let $D$ be a plane curve of degree $d$. Then $h^0(\mathcal O_C(d)) > h^0(\mathcal O_D(d))$.}

\medskip

 Indeed,
\[
\begin{array}{rcl}
d^2 - g_C + 1 & = & h^0(\mathcal O_C(d)) - h^1(\mathcal O_C(d)) \\
& = & h^0(\mathcal O_C(d)) - h^2(\mathcal I_C(d)) \\
& = & h^0(\mathcal O_C(d))
\end{array}
\]
and similarly for $D$, so thanks to Claim 1 we have
\[
h^0(\mathcal O_C(d)) > h^0(\mathcal O_D(d))
\]
and we have Claim 2.

Then
\[
\begin{array}{rcll}
 \dim [I_C]_d & = & \displaystyle \binom{d+3}{3} - h^0(\mathcal O_C(d)) & \hbox{(since } h^1(\mathcal I_C(d)) = 0 ) \\ \\
& < & \displaystyle \binom{d+3}{3} - h^0(\mathcal O_D(d))
\end{array}
\]
thanks to Claim 2. To check that $S_P$ is an unexpected surface it is enough to note that
\[
\begin{array}{rcl}
\displaystyle \dim [I_C]_d - \binom{d-1+3}{3} & < & \displaystyle \binom{d+3}{3} - h^0(\mathcal O_D(d)) - \binom{d-1+3}{3} \\ \\
& = & \displaystyle \binom{d+2}{2} - [d^2 - g_D + 1] \\ \\
& = & 1
\end{array}
\]
using the value of $g_D$ mentioned above. 
\end{proof}

\begin{remark} \label{low deg rmk}
The same argument that was used to show uniqueness in the last result also shows that if $C$ is a curve of degree $d$ in $\PP^3$ 
then there does not exist a surface of degree $e \leq d-1$ containing $C$ with a singularity of multiplicty $e$ at a general point, since 
B\'ezout would force $S_P$ to be a component of such a surface. This means that $\dim [I_C]_e \leq \binom{(e-1)+3}{3}$ for all 
$1 \leq e \leq d$ (where the statement for degree $d$ is given in the proof).

In fact, the same argument works for a subvariety $V$ of $\PP^n$ of codimension two and degree $d$, to show that 
$\dim [I_V]_e \leq \binom{(e-1)+n}{n}$ for $1 \leq e \leq d-1$. This statement also extends to the case $e=d$, and the argument is contained in the proof of Proposition \ref{Pn}.

\end{remark}

We will give several  corollaries to Proposition \ref{cone in P3}. The first is to extend it to subvarieties of codimension two in $\PP^n$.

\begin{lemma} \label{nondeg}
Let $V$ be a reduced, equidimensional, non-degenerate subvariety of $\PP^n$ ($n \geq 4$) of codimension 2 (not necessarily irreducible). 
Let $H$ be a general hyperplane and let $W = V \cap H$. Then $W$ is non-degenerate in $H = \PP^{n-1}$.
\end{lemma}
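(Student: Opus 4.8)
The plan is to translate non-degeneracy into a statement about linear forms and then reduce to the case where $V$ is irreducible, gluing the pieces together by a linear-algebra argument that is where the hypotheses $\operatorname{codim} V = 2$ and $n \geq 4$ do their work. Writing $\langle \cdot \rangle$ for projective linear span, $W$ is non-degenerate in $H$ exactly when $\langle W \rangle = H$, equivalently when the only linear form on $H \cong \PP^{n-1}$ vanishing on $W$ is $0$. So the goal is to show $\langle V \cap H \rangle = H$ for general $H$.

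First I would reduce to irreducible pieces. Write $V = V_1 \cup \cdots \cup V_r$ with each $V_i$ irreducible of dimension $n-2 \geq 2$, and set $U_i = \langle V_i \rangle$. For a general $H$ one has $H \not\supseteq U_i$, so $U_i \cap H$ is a hyperplane of $U_i$; the content is the claim $\langle V_i \cap H \rangle = U_i \cap H$. Granting this, $\langle W \rangle = \langle \bigcup_i (V_i \cap H) \rangle$ is the join of the subspaces $U_i \cap H$, and it remains only to show this join fills $H$.

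The claim $\langle V_i \cap H \rangle = U_i \cap H$ is the assertion that a general hyperplane section of an irreducible non-degenerate variety of dimension $\geq 2$ is again non-degenerate in the hyperplane of its ambient span, and this is the step I expect to be the main obstacle. I would either cite it as classical or prove it as follows: by Bertini, $V_i \cap H$ is irreducible of dimension $\geq 1$; then the restriction sequence $0 \to \mathcal{I}_{V_i} \xrightarrow{\cdot L} \mathcal{I}_{V_i}(1) \to \mathcal{I}_{V_i\cap H}(1) \to 0$, combined with $[I_{V_i}]_1 = 0$, identifies the linear forms on $H$ vanishing on $V_i\cap H$ with a subspace of $H^1(\mathcal{I}_{V_i})$ that one checks vanishes for general $L$ precisely because $\dim V_i \geq 2$. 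This is the only genuinely nontrivial input.

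Finally comes the gluing step, where $\operatorname{codim} V = 2$ and $n \geq 4$ enter. Passing to affine cones $\hat U_i \subseteq \KK^{n+1}$, non-degeneracy of $V$ says $\sum_i \hat U_i = \KK^{n+1}$, and I must prove $\sum_i (\hat U_i \cap \hat H) = \hat H$ for general $H$, where $\hat H = \ker \ell_H$. Suppose some functional $\xi$, not proportional to $\ell_H$, vanishes on each $\hat U_i \cap \hat H = \ker(\ell_H|_{\hat U_i})$; then $\xi|_{\hat U_i} = c_i\,\ell_H|_{\hat U_i}$ for scalars $c_i$. The key point is that $\dim V_i = n-2$ forces $\dim_\KK \hat U_i \geq n-1$, so for any two components $\dim_\KK(\hat U_i \cap \hat U_j) \geq (n-1)+(n-1)-(n+1) = n-3 \geq 1$ since $n \geq 4$; as $\ell_H$ is nonzero on each of these finitely many nonzero intersections for general $H$, comparing $\xi$ on $\hat U_i \cap \hat U_j$ gives $c_i = c_j$ for all $i,j$. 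Hence all $c_i$ equal a common $c$, so $\xi - c\,\ell_H$ vanishes on $\sum_i \hat U_i = \KK^{n+1}$, forcing $\xi = c\,\ell_H$, a contradiction. Therefore the join is all of $H$ and $W$ is non-degenerate.
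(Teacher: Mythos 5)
Your proof is correct, but it takes a genuinely different route from the paper's, and a longer one. The paper applies the restriction sequence
$0 \to [I_V]_0 \to [I_V]_1 \to [I_{W|H}]_1 \to H^1(\mathcal I_V(0))$
directly to the whole, possibly reducible, $V$: non-degeneracy gives $[I_V]_1 = 0$, and $H^1(\mathcal I_V(0)) = 0$ because $V$ is connected and reduced, so $h^0(\mathcal O_V) = 1$. The hypotheses enter exactly once, at the connectedness claim: any two irreducible components have dimension $n-2$, and $(n-2)+(n-2) \geq n$ precisely when $n \geq 4$, so every pair of components meets. Your argument replaces this single global step by (a) the same cohomological mechanism applied to each component $V_i$ inside its own span $U_i$, where connectedness is automatic, and (b) a linear-algebra gluing of the spans, with the hypotheses entering through the parallel count $\dim_\KK(\hat U_i \cap \hat U_j) \geq 2(n-1)-(n+1) = n-3 \geq 1$. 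Both dimension counts are valid uses of $n\geq 4$; the paper's is on the components themselves, yours on their spans. What the paper's route buys is brevity: once one observes that $V$ itself is connected, no decomposition or gluing is needed. What yours buys is that the only geometric input is the classical per-component hyperplane-section statement, with reducibility handled by elementary linear algebra; your gluing step is sound as written. Two corrections to your step (a), neither fatal: the vanishing of $H^1(\mathcal I_{V_i/U_i})$ has nothing to do with $\dim V_i \geq 2$ or with the generality of $L$ --- it holds because $V_i$ is irreducible (hence connected) and reduced, so $h^0(\mathcal O_{V_i}) = 1$, and in particular $\dim V_i \geq 1$ suffices, making the Bertini irreducibility of $V_i \cap H$ unnecessary; and the restriction sequence must be taken inside $U_i$, where $[I_{V_i/U_i}]_1 = 0$ holds by definition of the span, rather than inside $\PP^n$, where $[I_{V_i}]_1$ is generally nonzero for a degenerate component.
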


\begin{proof}
We have the exact sequence
\[
0 \rightarrow [I_V]_0 \rightarrow [I_V]_1 \rightarrow [I_{W|H}]_1 \rightarrow H^1(\mathcal I_V (0)) \rightarrow \dots
\]
We want to show that the third vector space in this exact sequence is zero. But $[I_V]_1 = 0$ since $V$ is non-degenerate. On the other 
hand, we claim that $H^1(\mathcal I_V(0)) = 0$. This will complete the proof. But we also have the exact sequence
\[
0 \rightarrow [I_V]_0 \rightarrow [R]_0 \rightarrow H^0(\mathcal O_V(0)) \rightarrow H^1(\mathcal I_V(0)) \rightarrow 0.
\]
The first term is clearly zero. The second has dimension 1. The third has dimension 1 since $V$ is connected (being of codimension 2 and 
equidimensional) and reduced (by hypothesis). Thus the claim follows.
\end{proof}

\begin{proposition} \label{Pn}
Let $V$ be a reduced, equidimensional, non-degenerate subvariety of $\PP^n$ ($n \geq 3$) of codimension 2 and degree $d$ ($V$ may be 
reducible and/or singular but note that $d \geq 2$ since $V$ is non-degenerate, with $V$ being two codimension 2 linear spaces if $d=2$). 
Let $P \in \PP^n$ be a general point.   
Then the cone $S_P$ over $V$ with vertex $P$ is an unexpected hypersurface for $V$ of degree $d$ and multiplicity $d$ at $P$. It is 
the unique unexpected hypersurface of this degree and multiplicity.
\end{proposition}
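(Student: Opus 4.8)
The plan is to prove the three assertions of Proposition \ref{Pn} separately: that $S_P$ has degree $d$ and multiplicity $d$ at $P$, that it is the unique hypersurface of this degree and multiplicity containing $V$, and that it is unexpected. The first two follow from the Bézout argument of Proposition \ref{cone in P3} verbatim, and the third reduces to a single numerical inequality that I would establish by induction on $n$, with Proposition \ref{cone in P3} as the base case.

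First I would dispose of the geometric claims and uniqueness. Since $V$ has codimension $2$, the cone $S_P$ has dimension $n-1$ and so is a hypersurface; projecting from the general vertex $P$ shows $\deg S_P = \deg V = d$, and a cone of degree $d$ has multiplicity $d$ at its vertex. For uniqueness, suppose $F$ is a form of degree $d$ vanishing on $V$ with multiplicity $d$ at $P$. For any $Q \in V$ the line $\lambda = \overline{PQ}$ meets $\{F=0\}$ at $P$ and $Q$ with total multiplicity at least $d+1 > \deg (F|_\lambda)$, so $\lambda \subseteq \{F=0\}$ by Bézout; letting $Q$ vary over $V$ forces $S_P \subseteq \{F=0\}$, and since $\deg F = d = \deg S_P$, the form $F$ is a scalar multiple of the defining equation of $S_P$. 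Hence $\dim [I_V \cap I_X]_d = 1$, where $X = dP$.

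It remains to prove unexpectedness. Because the fat point $X = dP$ imposes at most $\binom{d-1+n}{n}$ conditions, once I show
\[
\dim [I_V]_d \le \binom{d-1+n}{n},
\]
the quantity $\max\{0,\ \dim[I_V]_d - \binom{d-1+n}{n}\}$ equals $0$, which is strictly less than $\dim[I_V \cap I_X]_d = 1$; this is precisely unexpectedness. I would prove this bound by induction on $n$. The base case $n=3$ is contained in the proof of Proposition \ref{cone in P3}, where the estimate $\dim[I_C]_d - \binom{d+2}{3} < 1$ yields $\dim[I_C]_d \le \binom{d-1+3}{3}$. For the inductive step with $n \ge 4$, take a general hyperplane $H$ and set $W = V \cap H \subseteq H = \PP^{n-1}$; then $W$ is reduced, equidimensional, of codimension $2$ and degree $d$, and non-degenerate by Lemma \ref{nondeg}, so the inductive hypothesis applied to $W$ gives $\dim[I_{W|H}]_d \le \binom{d-2+n}{n-1}$. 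The restriction exact sequence
\[
0 \to [I_V]_{d-1} \xrightarrow{\ \cdot L\ } [I_V]_d \to [I_{W|H}]_d \to \cdots
\]
yields $\dim[I_V]_d \le \dim[I_V]_{d-1} + \dim[I_{W|H}]_d$, and bounding $\dim[I_V]_{d-1} \le \binom{d-2+n}{n}$ by the $e=d-1$ case of Remark \ref{low deg rmk} gives
\[
\dim[I_V]_d \le \binom{d-2+n}{n} + \binom{d-2+n}{n-1} = \binom{d-1+n}{n}
\]
by Pascal's rule, as required.

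The main obstacle, I expect, is bookkeeping at the inductive step rather than any new idea: one must verify that the general hyperplane section $W$ genuinely inherits every hypothesis needed to invoke both Lemma \ref{nondeg} and the inductive form of the inequality (that it stays reduced, equidimensional of codimension $2$, and of the same degree $d$), and that the map $[I_V]_d \to [I_{W|H}]_d$ in the displayed sequence has kernel exactly $L \cdot [I_V]_{d-1}$, so that the dimension count is sharp. Everything else is a clean combination of the Bézout argument, Lemma \ref{nondeg}, Remark \ref{low deg rmk}, and Pascal's identity.
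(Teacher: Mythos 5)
Your proposal is correct and takes essentially the same route as the paper's proof: induction on $n$ with Proposition \ref{cone in P3} as the base case, a general hyperplane section $W = V \cap H$ made non-degenerate via Lemma \ref{nondeg}, the restriction exact sequence, the bound $\dim [I_V]_{d-1} \le \binom{d+n-2}{n}$ from Remark \ref{low deg rmk}, Pascal's identity, and uniqueness by the same B\'ezout argument. The only cosmetic difference is that you phrase the inductive hypothesis as the numerical bound $\dim [I_{W|H}]_d \le \binom{d+n-2}{n-1}$ and discard the cokernel term, whereas the paper carries the cokernel $[K]_d$ explicitly; the estimates are identical.
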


\begin{proof}
The proof is by induction on $n$. The initial case is Proposition \ref{cone in P3}, so we can assume $n \geq 4$.
Let $H$ be a general hyperplane through $P$ and let $W = V \cap H$. Since $P$ is general, we can assume that $H$ is general as well.
By Lemma \ref{nondeg}, $W$ is non-degenerate in $H = \PP^{n-1}$, and it is also reduced and equidimensional. Let $T_P$ be the cone in 
$H$ over $W$ with vertex $P$. Thus by induction, $T_P$ is the unique hypersurface of degree $d$ containing $W$ with multiplicity $d$ at $P$, and it is unexpected.

Consider the exact sequence
\[
\begin{array}{ccccccccccccccccccc}
0 & \rightarrow & [I_V]_{d-1} & \rightarrow & [I_V]_d & \rightarrow & [I_{W|H} ]_d & \longrightarrow & H^1(\mathcal I_V(d-1)) & \rightarrow & \dots \\
&&&&&&\hbox{\ \ \ \ \ \ \ } \searrow & & \nearrow \hbox{\ \ \ \ \ \ \ \ \ \ \ \ } \\
&&&&&&& [K]_d \\
&&&&&&\hbox{\ \ \ \ \ \ \ } \nearrow & & \searrow \hbox{\ \ \ \ \ \ \ \ \ \ \ \ } \\
&&&&&&\hbox{\  } 0 & & 0 \hbox{\ \ \ \ \ \  } \\
\end{array}
\]
We have
\[
\dim [I_V]_d = \dim [I_V]_{d-1} + \dim [I_{W|H}]_d - \dim [K]_d.
\]
We claim that  
\[
\dim [I_V]_d - \binom{d-1+n}{n} \leq 0
\]
so that $S_P$ is unexpected. We have
\[
\begin{array}{rcl}
\displaystyle \dim [I_V]_d - \binom{d-1+n}{n} & = & \displaystyle \dim [I_V]_{d-1}  + \left ( \dim [I_{W|H}]_d - \binom{d-1+(n-1)}{n-1} \right ) \\ \\
&& \displaystyle + \binom{d-1+(n-1)}{n-1} - \dim [K]_d - \binom{d-1+n}{n} \\ \\
& \leq & \displaystyle  \dim [I_V]_{d-1} + \binom{d+n-2}{n-1} - \dim [K]_d - \binom{d+n-1}{n} \\ \\
& = & \displaystyle \dim [I_V]_{d-1} - \binom{d+n-2}{n} - \dim [K]_d  .
\end{array}
\]
But Remark \ref{low deg rmk} gives
\[
\dim [I_V]_{d-1} \leq \binom{d+n-2}{n},
\]
which completes the claim.  Uniqueness follows in the same way as it did for Proposition~\ref{cone in P3}.
\end{proof}

The next few corollaries have analogs in higher projective space, but the statements are a bit cleaner for curves in $\PP^3$.

\begin{corollary} \label{curves 1}
Let $C$ be a reduced, equidimensional, non-degenerate curve of degree $d$ in $\PP^3$ ($C$ may be reducible, singular, and/or disconnected). 
Let $P \in \PP^3$ be a general point. Let $Z \subset C$ be any set of  points on $C$ such that $[I_C]_d = [I_Z]_d$. 
Then the cone $S_P$ over $C$ with vertex $P$ is an unexpected surface of degree $d$ for $Z$ with multiplicity $d$ at $P$. It is the 
unique unexpected surface of this degree and multiplicity. In particular, we may choose $Z$ to impose independent conditions on forms of degree $d$.
\end{corollary}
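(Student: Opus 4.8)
The plan is to derive the entire statement from Proposition~\ref{cone in P3} by exploiting the single hypothesis $[I_C]_d = [I_Z]_d$, which is exactly the bridge needed to transfer unexpectedness from the curve $C$ to the finite point set $Z$. Writing $X = dP$ for the fat point scheme (so $n = 3$ and $m = d$), the first step I would record is the elementary identity
\[
[I_Z \cap I_X]_d = [I_Z]_d \cap [I_X]_d = [I_C]_d \cap [I_X]_d = [I_C \cap I_X]_d,
\]
where the outer equalities hold because intersecting ideals commutes with taking a graded piece, and the middle equality is the hypothesis. Concretely, a degree-$d$ form vanishes on $Z$ with multiplicity $d$ at $P$ if and only if it vanishes on all of $C$ with multiplicity $d$ at $P$.

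With this identity in hand, both the uniqueness assertion and the computation of the left-hand side of the unexpectedness inequality are immediate. By Proposition~\ref{cone in P3}, the space $[I_C \cap I_X]_d$ is one-dimensional, spanned by the form defining the cone $S_P$; hence $[I_Z \cap I_X]_d$ is also one-dimensional and spanned by $S_P$, giving uniqueness and $\dim [I_Z \cap I_X]_d = 1$ at once. To check unexpectedness I would then evaluate the right-hand side of the definition: with $n = 3$ and $m = d$ it equals $\max\{0, \dim [I_Z]_d - \binom{d+2}{3}\}$, and since $\dim [I_Z]_d = \dim [I_C]_d \leq \binom{d+2}{3}$ by Remark~\ref{low deg rmk}, this maximum is $0$. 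As $1 > 0$, the cone $S_P$ is an unexpected surface for $Z$.

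It remains to produce a $Z$ imposing independent conditions on forms of degree $d$, and here I would argue by a dimension count. The evaluation functionals $\{\mathrm{ev}_Q : Q \in C\}$ on the finite-dimensional space $[R]_d$ have common kernel exactly $[I_C]_d$, so they span the annihilator of $[I_C]_d$, a space of dimension $N := \binom{d+3}{3} - \dim [I_C]_d$. Selecting $N$ of these functionals that are linearly independent and letting $Z$ consist of the corresponding points of $C$, one obtains a set imposing independent conditions on $[R]_d$ whose common kernel is again $[I_C]_d$, so that $[I_Z]_d = [I_C]_d$ and the previous paragraphs apply. The real content of the corollary is the reduction identity of the first paragraph; the only step warranting care is this last construction, where one must use the finite-dimensionality of $[R]_d$ to guarantee that finitely many points of $C$ already cut out $[I_C]_d$ in degree $d$ while simultaneously imposing independent conditions. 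I do not expect a serious obstacle, since once Proposition~\ref{cone in P3} is available the corollary is essentially a matter of linear algebra.
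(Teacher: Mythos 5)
Your proposal is correct and takes essentially the same route as the paper, whose entire proof is the observation that the hypothesis $[I_C]_d = [I_Z]_d$ makes everything immediate from Proposition \ref{cone in P3}. You have merely made that reduction explicit (the identity $[I_Z \cap I_X]_d = [I_C \cap I_X]_d$, the bound $\dim [I_C]_d \leq \binom{d+2}{3}$ already contained in Remark \ref{low deg rmk}, and the linear-algebra selection of finitely many points of $C$ whose evaluation functionals span the annihilator of $[I_C]_d$), all of which the paper leaves implicit.
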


\begin{proof}
It is immediate from the hypothesis that $[I_C]_d = [I_Z]_d$.
\end{proof}

\begin{corollary} \label{curves 2}
Let $C$ be a smooth, irreducible, non-degenerate curve of degree $d \geq 3$ in $\PP^3$. Let $P \in \PP^3$ be a general point. 
Let $Z \subset C$ be any set of at least $d^2+1$ points on $C$ (general or not). Then the cone $S_P$ over $C$ with vertex $P$ is 
an unexpected surface of degree $d$ for $Z$ with multiplicity $d$ at $P$. It is the unique unexpected surface of this degree and multiplicity.
\end{corollary}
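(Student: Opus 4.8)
The plan is to deduce this directly from Corollary~\ref{curves 1}, whose only hypothesis on the point set is the equality $[I_C]_d = [I_Z]_d$. Thus the whole task reduces to showing that for any set $Z \subset C$ of at least $d^2+1$ points, every form of degree $d$ vanishing on $Z$ already vanishes on all of $C$; once this is established, Corollary~\ref{curves 1} supplies both the unexpectedness of the cone $S_P$ and its uniqueness with no further work.

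To prove the equality, note first that $[I_C]_d \subseteq [I_Z]_d$ is automatic from $Z \subset C$, so only the reverse inclusion needs argument. Let $F \in [I_Z]_d$ and let $S$ be the surface it defines, a surface of degree $d$. Since $F$ vanishes on $Z$ and $Z \subset C$, all of the at least $d^2+1$ points of $Z$ lie in the intersection $C \cap S$. The key step is B\'ezout's theorem: $C$ and $S$ both have degree $d$, so if $C \not\subseteq S$ then $C \cap S$ is zero-dimensional of total intersection degree $d \cdot d = d^2$, and in particular consists of at most $d^2$ distinct points. Having $d^2+1$ distinct common points therefore forces $C \subseteq S$, i.e.\ $F \in [I_C]_d$. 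This gives $[I_Z]_d \subseteq [I_C]_d$, hence $[I_C]_d = [I_Z]_d$ as desired.

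I expect the only real subtlety to be the bookkeeping in the B\'ezout count for a reduced finite set: because $Z$ is a set of distinct (reduced) points on the smooth curve $C$, each point of $Z$ lying on $S$ contributes at least $1$ to the intersection multiplicity, so $d^2+1$ distinct points genuinely exceed the B\'ezout bound $d^2$, which is exactly what triggers the containment $C \subseteq S$. Notice that this argument is entirely insensitive to the position of the points, which is why the conclusion holds whether or not $Z$ is general. Finally, the hypothesis $d \geq 3$ is not an extra restriction but an automatic feature: a smooth, irreducible, non-degenerate curve in $\PP^3$ has degree at least $3$, since an irreducible curve of degree at most $2$ is planar and hence degenerate.
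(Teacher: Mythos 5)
Your proof is correct and is exactly the argument the paper intends: the corollary is stated without proof precisely because, once B\'ezout's theorem shows that $|Z| \geq d^2+1$ points on the irreducible curve $C$ force $[I_Z]_d = [I_C]_d$, it reduces immediately to Corollary \ref{curves 1}. Indeed, the paper uses this very B\'ezout reduction explicitly later, in the proof of Corollary \ref{WLP for curves in P3}.
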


\begin{example} \label{twc1}
Let $C$ be a twisted cubic curve in $\PP^3$. Then $d=3$ and $g_C = 0$. Let $Z$ be a set of $10$ points on $C$, so $[I_C]_3 = [I_Z]_3$ 
has dimension $10$. In this case $\dim [I_Z]_3 - \binom{2+3}{3} = 10 - 10 = 0$ so we do not expect a hypersurface of degree 3 with 
multiplicity 3 at a general point containing the 10 points of $Z$. But in fact there is such an unexpected hypersurface, given by the cone over $C$ with vertex at a general point.
\end{example}

\begin{remark} \label{need fewer pts}
Corollary \ref{curves 1}, Corollary \ref{curves 2}, Corollary \ref{add S} and Corollary \ref{WLP Pn cons} all deal with the situation 
that we begin with a set of points lying on a variety $C$ of codimension two in $\PP^n$, and have enough points so that 
$[I_C]_d = [I_Z]_d$. In fact this assumption can be relaxed, although the statement becomes a little bit less transparent so we 
retained this assumption. But notice that the fact that $C$ already admits an unexpected hypersurface of degree $d$ means that 
we only need a set of $\binom{d+n}{n} - \binom{d-1+n}{n}$ points on $C$ that impose independent conditions on forms of degree 
$d$, and this number can be much smaller than the number forced by the condition $[I_C]_d = [I_Z]_d$.

For example, say $C$ is a general smooth rational curve in $\PP^3$ of degree 6. The Hilbert function of $C$ is given by the 
sequence $1,4,10, 19, 25, 31, 37, \dots$ so the assumption that $[I_C]_6 = [I_Z]_6$ means we need $Z$ to have at least 37 
points of $C$. Instead, suppose that $Z$ is a sufficiently general set of $\binom{6+3}{3} - \binom{5+3}{3} = 28$ points on $C$. 
Then the Hilbert function of $Z$ is given by the sequence $1,4,10,19, 25, 28, 28, \dots$ and we still do not expect a hypersurface of 
degree 6 with a point of multiplicity 6 to contain $Z$, but we know that the cone over $C$ is such a hypersurface. Notice that in this 
case we not only have $[I_C]_6 \neq [I_Z]_6$ but even $[I_C]_5 \neq [I_Z]_5$.
\end{remark}

\begin{corollary}
Let $C$ be a non-degenerate union of $d$ lines  in $\PP^3$. Let $P \in \PP^3$ be a general point. Let $Z \subset C$ be a  
set of $d(d+1)$ points on $C$ chosen by taking $d+1$ general points on each line. Then the cone $S_P$ over $C$ with vertex 
$P$ is an unexpected surface of degree $d$ for $Z$ with multiplicity $d$ at $P$. It is the unique unexpected surface of this degree and multiplicity.
\end{corollary}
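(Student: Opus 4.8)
The plan is to reduce this statement directly to Corollary \ref{curves 1}, whose hypotheses require only that $C$ be a reduced, equidimensional, non-degenerate curve of degree $d$ in $\PP^3$ together with the equality $[I_C]_d = [I_Z]_d$. The first task is therefore to check that a non-degenerate union $C = \ell_1 \cup \cdots \cup \ell_d$ of $d$ distinct lines satisfies the standing hypotheses: it is reduced because the lines are distinct, equidimensional because every component is a curve, non-degenerate by assumption, and of degree $d$ because each line contributes degree $1$. Choosing the points generally on each line also guarantees that the $d(d+1)$ points are genuinely distinct, since general points avoid the finitely many pairwise intersections of the lines.

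The heart of the argument is the equality $[I_C]_d = [I_Z]_d$. The inclusion $[I_C]_d \subseteq [I_Z]_d$ is immediate from $Z \subset C$. For the reverse inclusion, I would take any form $F$ of degree $d$ vanishing on $Z$ and restrict it to a single line $\ell_i \cong \PP^1$. The restriction $F|_{\ell_i}$ is a binary form of degree $d$, and it vanishes at the $d+1$ chosen points of $Z$ lying on $\ell_i$; since $d+1 > d$, this restriction must vanish identically on $\ell_i$. As this holds for every $i$, the form $F$ vanishes on all of $C$, giving $[I_Z]_d \subseteq [I_C]_d$.

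With the equality $[I_C]_d = [I_Z]_d$ in hand, Corollary \ref{curves 1} applies verbatim and yields both the unexpectedness of the cone $S_P$ and its uniqueness. The only step requiring any care is the restriction above, and even there the obstacle is merely a vanishing count: one must know that a nonzero binary form of degree $d$ has at most $d$ roots, so that the $d+1$ imposed zeros force $F|_{\ell_i} \equiv 0$. There is thus no serious difficulty; the content of the corollary is entirely inherited from Corollary \ref{curves 1}, and the role of the $d+1$ points per line is precisely to force vanishing on each component by this B\'ezout-type count.
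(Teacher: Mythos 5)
Your proof is correct and is essentially the paper's own (implicit) argument: the corollary is stated without proof precisely because it reduces to Corollary \ref{curves 1} exactly as you describe, the only content being the B\'ezout-type count that a degree-$d$ form vanishing at $d+1$ points of a line vanishes identically on that line, which forces $[I_Z]_d = [I_C]_d$. Nothing further is needed.
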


\begin{remark} \label{no unexp for genl pts}
On the other hand, it is not the case that all sets of points in $\PP^3$ (or any other projective space) admit an 
unexpected surface (resp. hypersurface) of some sort. Indeed, suppose $Z$ is a general set of points in $\PP^3$ and let us 
ask if there is any degree and multiplicity at a general point, in which $Z$ admits an unexpected surface. By considering the 
conditions imposed first by the general multiple point and then by the general points $Z$, we see that we must always get the expected number of conditions.

What is interesting is that in \cite{CHMN} Corollary 6.8 it was shown that a set of points in linear general position in $\PP^2$ does not 
admit an unexpected curve of degree $d$ and multiplicity $d-1$ at a general point. Example \ref{twc1} already shows that this does not 
extend to a set of points in linear general position in $\PP^3$, if we weaken the condition on the multiplicity to allow multiplicity $d$. We do 
not know if the precise result from \cite{CHMN} continues to hold in higher dimensional projective spaces.
\end{remark}

\begin{question} \label{GLPQ}
Let $Z$ be a non-degenerate set of points in linear general position in $\PP^n$, $n \geq 3$. Is it true that there does not 
exist an unexpected hypersurface of any degree $d$ and multiplicity $d-1$ at a general point?
\end{question}

We next extend the cone construction  in two different ways. First, we point out that Proposition \ref{cone in P3} extends to surfaces in $\PP^3$ of higher degree and higher multiplicity. 
At the end of this section we will apply this result to show the failure of the Weak Lefschetz Property for certain ideals of powers of linear forms in four variables.

\begin{corollary} \label{higher deg}
Let $C$ be a reduced, equidimensional, non-degenerate curve of degree $d \geq 2$ in $\PP^3$ ($C$ may be reducible, singular, and/or disconnected). 
Let $P \in \PP^3$ be a general point. 
Let $k \geq d$ be a positive integer. Then $C$ admits an unexpected surface of degree $k$ with multiplicity $k$ at $P$.
\end{corollary}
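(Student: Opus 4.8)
The plan is to reduce the degree-$k$, multiplicity-$k$ statement to the degree-$d$, multiplicity-$d$ statement already established in Proposition \ref{cone in P3}, by exploiting the cone $S_P = S_P(C)$ together with a general surface of degree $k-d$. First I would recall that unexpectedness for $C$ in degree $k$ with multiplicity $k$ at $P$ amounts to showing
\[
\dim [I_C \cap I_{kP}]_k > \max\left\{ 0,\ \dim [I_C]_k - \binom{k-1+3}{3} \right\}.
\]
The right-hand side is easy to pin down: by \cite{GLP}, $H^1(\mathcal I_C(k)) = 0$ for all $k \geq d$ (the needed regularity), so $\dim [I_C]_k = \binom{k+3}{3} - h^0(\mathcal O_C(k))$, and since $k \geq d \geq 1$ the curve $C$ has no higher cohomology in this range, giving $h^0(\mathcal O_C(k)) = dk - g_C + 1$. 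Thus the expected dimension $\dim [I_C]_k - \binom{k+2}{3}$ is an explicit expression in $d$, $k$, $g_C$.

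For the left-hand side, the key geometric input is uniqueness from Proposition \ref{cone in P3}: by the same B\'ezout argument used there, any surface of degree $d$ containing $C$ with multiplicity $d$ at $P$ must equal $S_P$. I would then argue that every surface of degree $k$ containing $C$ with multiplicity $k$ at $P$ must contain $S_P$ as a component. Indeed, for a general line $\lambda$ through $P$ meeting $C$, a degree-$k$ form $F$ with multiplicity $k$ at $P$ vanishes to order $k$ at $P$ along $\lambda$ and also at the point $\lambda \cap C$, forcing $F$ to vanish on all of $\lambda$ by B\'ezout; as $\lambda$ varies this shows $S_P \subseteq \{F = 0\}$. Hence $F = G \cdot (\text{form defining } S_P)$ for some form $G$ of degree $k-d$, and conversely any such product, where $G$ has multiplicity exactly $k-d$ at $P$, gives a surface of degree $k$ containing $C$ with multiplicity $k$ at $P$. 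This identifies
\[
[I_C \cap I_{kP}]_k \cong [I_{(k-d)P}]_{k-d},
\]
whose dimension is $\binom{k-d+3}{3} - \binom{k-d-1+3}{3} = \binom{k-d+2}{2}$.

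It then remains to verify the strict inequality between this computed left-hand side and the expected value from the first paragraph. This is a finite binomial-coefficient inequality; the case $k=d$ recovers Proposition \ref{cone in P3} (where the gap was exactly $1$), and I expect the gap to grow as $k$ increases, so the inequality should hold throughout the range $k \geq d$. The main obstacle is the careful bookkeeping in the product decomposition: I must confirm that requiring multiplicity $k$ at $P$ for $F = G \cdot (\text{defining form of } S_P)$ forces $G$ to have multiplicity $k-d$ at $P$ (using that $S_P$ has multiplicity exactly $d$ there), and that no extraneous components arise, so that the isomorphism $[I_C \cap I_{kP}]_k \cong [I_{(k-d)P}]_{k-d}$ is exact rather than merely an inclusion. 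Once that identification is clean, the remaining comparison of binomial coefficients is routine.
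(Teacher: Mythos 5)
Your construction is essentially the paper's own proof: the paper likewise takes the cone $S_P$ furnished by Proposition \ref{cone in P3}, multiplies its defining form by elements of $[I_P^{k-d}]_{k-d}$ to get $\dim [I_C\cap I_{kP}]_k \geq \binom{k-d+2}{2}$, and compares this with the expected dimension $\binom{k+2}{2}-dk+g_C-1$ computed from the regularity statement of \cite{GLP}. The one place you go beyond the paper is the exact identification $[I_C\cap I_{kP}]_k = G\cdot[I_P^{k-d}]_{k-d}$, where $G$ defines $S_P$; your divisibility argument for this is sound (any such $F$ vanishes on $S_P$ by B\'ezout, $S_P$ is a reduced hypersurface so $G\mid F$, and additivity of multiplicities forces the cofactor to lie in $[I_P^{k-d}]_{k-d}$), but it is not needed: the inclusion giving the lower bound is all the comparison requires, and it is all the paper uses.

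The caveat concerns your final paragraph, where the real content is hiding. The remaining inequality
\[
\binom{k+2}{2}-dk+g_C-1 \;<\; \binom{k-d+2}{2}
\]
is not a pure binomial-coefficient fact, and your heuristic about it is wrong: the difference of the two sides equals $\binom{d-1}{2}-g_C$, a constant independent of $k$, so the gap does not grow as $k$ increases — it is the same for every $k\geq d$. Consequently the inequality holds (for any, equivalently all, $k\geq d$) if and only if $g_C < \binom{d-1}{2}$, which is precisely Claim 1 in the proof of Proposition \ref{cone in P3}, and is exactly where the non-degeneracy hypothesis enters: for a plane curve one has $g_C=\binom{d-1}{2}$, the inequality becomes an equality, and indeed no unexpected surface exists in that case. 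So to complete your argument you must explicitly invoke that genus bound rather than treat the last step as routine bookkeeping; once you do, your proof closes and coincides with the paper's.
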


\begin{proof}
Let $Y = C \cup kP$. We want to show that 
\begin{equation} \label{to show}
  \max \left\{ 0 , \dim[I_C]_k - \binom{k+2}{3} \right\} < \dim [I_Y]_k. 
\end{equation}
Modifying the calculation above, we know that
\[
\dim [I_C]_k = \binom{k+3}{3} - [dk - g_C + 1]
\]
so
\[
\dim [I_C]_k - \binom{k+2}{3} = \binom{k+2}{2} - dk + g_C -1.
\]
On the other hand, we have a unique surface $S_P$ of degree $d$ with a singularity of multiplicity $d$ at the general point $P$, so 
by multiplying $S_P$ by an element of $[I_P^{k-d}]_{k-d}$ we always obtain a surface of degree $k$ with  multiplicity $k$ at $P$. Thus
\[
\dim [I_Y]_k \geq \binom{k-d+2}{2},
\]
in particular $\dim [I_Y]_k > 0$. 
Thus combining, it is enough to show
\[
\binom{k+2}{2} - dk + g_C -1 < \binom{k-d+2}{2}.
\]
A calculation shows that this is equivalent to 
\[
g_C < \frac{(d-1)(d-2)}{2},
\]
which we showed in Claim 1 of Proposition \ref{cone in P3}.
\end{proof}

\begin{remark}
Although we do not state them explicitly, we get the analogous corollaries for ``sufficiently many" points on $C$ that we got for 
Proposition \ref{cone in P3}, but now in higher degree. The key is to assume (directly or by a condition on the number of points) that $[I_C]_k = [I_Z]_k$.
\end{remark}

We now give a different extension of the cone construction, allowing us to find unexpected hypersurfaces where the multiplicity is strictly 
less than the degree. It misses by 1 to be an answer to Question \ref{GLPQ}.  

\begin{corollary} \label{add S}
Let $V$ be a reduced, equidimensional, non-degenerate subvariety of codimension two and degree $d$ in $\PP^n$, $n \geq 3$. Let $S$ be a hypersurface of degree $e \geq 1$
not containing any irreducible component of $V$. Let $Y = V \cup S$. Let $Z \subset Y$ be a finite set of 
points such that $[I_Z]_{d+e} = [I_Y]_{d+e}$. Let $P$ be a general point in $\PP^n$. Then $Z$ admits a unique unexpected 
hypersurface of degree $d+e$ with multiplicity $d$ at $P$. In particular, if $V$ is irreducible and $e \geq 2$ then we can take $Z$ 
to be points in linear general position which impose independent conditions on forms of degree $d+e$.
\end{corollary}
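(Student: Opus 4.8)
The plan is to produce an explicit unexpected hypersurface for $Y = V \cup S$ and then check that the dimension count is in our favor, finally transferring everything to a finite point set $Z$ via the hypothesis $[I_Z]_{d+e} = [I_Y]_{d+e}$. First I would recall from Proposition \ref{Pn} that $V$ admits the cone $S_P$ as its unique unexpected hypersurface of degree $d$ with multiplicity $d$ at the general point $P$. The natural candidate for an unexpected hypersurface of $Y$ of degree $d+e$ with multiplicity $d$ at $P$ is the product $S_P \cdot S$: it has degree $d+e$, it vanishes on $V$ (because $S_P \supseteq V$) and on $S$, hence it contains $Y$, and since $S_P$ has multiplicity $d$ at $P$ while $S$ is a general hypersurface through no special point, the product has multiplicity exactly $d$ at $P$. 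So $[I_Y \cap I_{dP}]_{d+e}$ is nonzero, which shows a hypersurface of the desired type exists.

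Next I would set up the two dimension counts that define unexpectedness. On the one hand the \emph{expected} dimension is $\max\{0, \dim[I_Y]_{d+e} - \binom{d-1+n}{n}\}$, where $\binom{d-1+n}{n}$ is the number of conditions a point of multiplicity $d$ imposes in $\PP^n$. On the other hand the \emph{actual} dimension $\dim[I_Y \cap I_{dP}]_{d+e}$ must be shown to exceed this. The key input is that the factor $S_P$ already accounts for the singularity: multiplying the single surface $S_P$ by the full space $[I_S \cdot R]$ of forms of degree $e$ that contain $S$—more precisely by $F_S \cdot [R]_e$ where $F_S$ is the form defining $S$—produces a family of dimension $\binom{e+n}{n}$ of hypersurfaces of degree $d+e$ all containing $Y$ with multiplicity $d$ at $P$. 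Thus $\dim[I_Y \cap I_{dP}]_{d+e} \geq \binom{e+n}{n}$, while I would bound $\dim[I_Y]_{d+e}$ from above using $\dim[I_V]_{d+e}$ together with the exact sequence relating $I_Y$, $I_V$, and $I_S$ (i.e.\ $I_Y = I_V \cap I_S$, with $I_V \cdot F_S \subseteq I_Y$), and then invoke the bound $\dim[I_V]_{d+e-?}\leq\binom{\cdots}{n}$ from Remark \ref{low deg rmk} to control the relevant terms.

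The main obstacle, and the step I expect to require the most care, is the comparison that turns the two counts into a strict inequality: I need $\binom{e+n}{n} > \dim[I_Y]_{d+e} - \binom{d-1+n}{n}$, or equivalently $\dim[I_Y]_{d+e} < \binom{e+n}{n} + \binom{d-1+n}{n}$. This is a genuinely arithmetic binomial inequality once the geometry pins down $\dim[I_Y]_{d+e}$, and it is exactly the place where the fact that $V$ is \emph{unexpected} (rather than expected) for degree $d$ buys us the extra dimension; the slack is what lets the multiplicity drop from $d+e$ all the way to $d$, and it is also why the construction ``misses by $1$'' to resolve Question \ref{GLPQ}. Finally, for the last sentence I would argue that when $V$ is irreducible and $e \geq 2$ the union $Y = V \cup S$ is reduced and the general finite subset $Z \subset Y$ with $\binom{d+e+n}{n} - \binom{d+e-1+n}{n}$ points in linear general position satisfies $[I_Z]_{d+e} = [I_Y]_{d+e}$, so unexpectedness descends from $Y$ to $Z$; uniqueness follows exactly as in Proposition \ref{cone in P3}, since any hypersurface of degree $d+e$ with multiplicity $d$ at $P$ containing $V$ must, by the B\'ezout argument already used there, contain the cone $S_P$ as a factor and hence be forced into the form $S_P \cdot (\text{degree } e)$.
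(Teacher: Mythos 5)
Your candidate hypersurface $S_P\cup S$ is the paper's as well, but your central dimension count is wrong, and the error is fatal to the argument as structured. You claim that $\dim [I_Y\cap I_P^d]_{d+e}\geq \binom{e+n}{n}$ by multiplying $S_P$ by a $\binom{e+n}{n}$-dimensional space of degree-$e$ forms. No such family exists inside $[I_Y]_{d+e}$: a product $S_P\cdot G$ with $G\in [R]_e$ contains $V$ but does \emph{not} contain $S$ unless $G$ is a scalar multiple of $F_S$ (and the space ``$F_S\cdot [R]_e$'' you write down consists of forms of degree $2e$, so it is not even of the right degree). Indeed, every form vanishing on $Y\supseteq S$ is divisible by $F_S$, and since $S$ contains no irreducible component of the reduced variety $V$, the cofactor must vanish on $V$; hence $[I_Y\cap I_P^d]_{d+e}=F_S\cdot [I_V\cap I_P^d]_d$, which is exactly $1$-dimensional by the uniqueness in Proposition \ref{Pn}. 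You have imported the counting scheme of Corollary \ref{higher deg}, where the extra factor only needs to vanish to high order at $P$; here it must vanish on the fixed hypersurface $S$, which pins it down to the multiples of $F_S$.

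Because the actual dimension is $1$ and not $\binom{e+n}{n}$, the inequality you set as your goal, $\dim [I_Y]_{d+e}<\binom{e+n}{n}+\binom{d-1+n}{n}$, is too weak: even if proved, it would not yield unexpectedness. What is needed is that the \emph{expected} dimension be zero, i.e.\ $\dim [I_Z]_{d+e}\leq \binom{d-1+n}{n}$, and this is exactly the step you leave unresolved (the ``?'' in your appeal to Remark \ref{low deg rmk}). It follows immediately from the same divisibility observation: $[I_Z]_{d+e}=[I_Y]_{d+e}=F_S\cdot [I_V]_d$, so $\dim [I_Z]_{d+e}=\dim [I_V]_d\leq \binom{d-1+n}{n}$, the last inequality being the claim established in the proof of Proposition \ref{Pn}; then the single hypersurface $S_P\cup S$ already exceeds the expected count of $0$. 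This two-line computation is the paper's entire proof. Two further slips: your uniqueness argument via B\'ezout fails when $e\geq 1$, since a line through $P$ and a point of $V$ meets a degree-$(d+e)$ hypersurface with multiplicity only $\geq d+1\leq d+e$, so it need not be contained in it --- uniqueness instead follows from the factorization, as any unexpected form is $F_S$ times an element of the $1$-dimensional space $[I_V\cap I_P^d]_d$. And your count of $\binom{d+e+n}{n}-\binom{d+e-1+n}{n}$ points cannot give $[I_Z]_{d+e}=[I_Y]_{d+e}$; since $\dim [I_Y]_{d+e}=\dim[I_V]_d$ is small, one needs $\binom{d+e+n}{n}-\dim [I_V]_d$ points for that equality.
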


\begin{proof}
Let $F$ be the form defining $S$. Then
\[
 [I_Z]_{d+e} =  [I_Y]_{d+e} = F \cdot [I_V]_d
\]
so
\[
\dim [I_Z]_{d+e} - \binom{d+n-1}{n} = \dim [I_V]_d - \binom{d+n-1}{n} < 1
\]
as we saw in the proof of Proposition \ref{Pn}. Thus $S_P \cup S$ is an unexpected hypersurface of degree $d+e$ with a singular point of multiplicity $d$ at $P$.
\end{proof}

With the above results we can now give a complete answer to the following natural question. For which $d$ and $m$ does there exist a 
set of points $Z$ in $\PP^2$ (resp. $\PP^n$) such that $Z$ admits an unexpected curve (resp. hypersurface) of degree $d$ and multiplicity $m$ at a general point?

\begin{theorem} \label{m<d}\ 

\begin{enumerate}[label={(\roman*)}]
\item There exists a finite set of points $Z \subset \PP^2$ admitting an unexpected curve of degree $d$ and multiplicity $m$ at a general point if and only if $d > m > 2$.

\item For $n \geq 3$, there exists a finite set of points $Z \subset \PP^n$ admitting an unexpected hypersurface of degree $d$ and multiplicity $m$ at a general point if and only if $d \geq m \geq 2$.

\end{enumerate}
\end{theorem}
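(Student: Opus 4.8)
The plan is to treat the two statements separately, in each case proving both the necessity and the sufficiency of the stated numerical conditions. For the sufficiency direction, which is where the machinery of this section pays off, I would produce explicit constructions of the required point sets $Z$ using the cone results already established. For part (ii), given $n \geq 3$ and any pair with $d \geq m \geq 2$, I would split into the case $m = d$ and the case $m < d$. When $m = d$, Proposition \ref{Pn} directly supplies an unexpected hypersurface: take any reduced, equidimensional, non-degenerate codimension two subvariety $V$ of degree $m$ (for instance a rational normal curve in $\PP^3$, or an appropriate non-degenerate variety in higher $\PP^n$), and let $Z$ be enough points on $V$ so that $[I_Z]_m = [I_V]_m$, as in Corollary \ref{curves 1}. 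When $m < d$, I would invoke Corollary \ref{add S} with degree $e = d - m \geq 1$: choose $V$ of codimension two and degree $m$, a hypersurface $S$ of degree $e$ meeting $V$ properly, and $Z \subset V \cup S$ with $[I_Z]_{d} = [I_{V \cup S}]_{d}$; this yields an unexpected hypersurface of degree $m + e = d$ with multiplicity $m$ at $P$.

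For part (i), the $n = 2$ case, the cone construction is unavailable (there is no codimension two non-degenerate subvariety of $\PP^2$ to cone over), so the sufficiency direction must draw on the results of \cite{CHMN} or on the root-system and dual constructions developed later in the paper. Here the strategy is to exhibit, for each $(d,m)$ with $d > m > 2$, a concrete line arrangement $\mA$ whose dual points $Z_\mA$ admit the desired unexpected curve; the supersolvable arrangements classified in \cite{DMO} and the examples of \cite{CHMN} should cover the required range, possibly after taking suitable sub-configurations to adjust $d$ and $m$. I would organize this by first producing one family realizing the extremal case and then explaining how to push $d$ up relative to $m$.

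The necessity direction is where the real constraints live, and I expect it to be the main obstacle, particularly the boundary conditions in the plane case. For part (i) I must rule out $m \leq 2$ and $d \leq m$ for $n = 2$: the condition $d \leq m$ follows because for $d \le m$ a general fat point of multiplicity $m$ already forces the zero curve on a plane curve through finitely many points (the conditions are trivially independent for dimension reasons), while the exclusion of $m = 1$ and $m = 2$ requires showing that a single or double general point never fails to impose independent conditions on plane curves through a finite set $Z$; the case $m = 2$ is the delicate one and presumably relies on a dimension-count argument specific to $\PP^2$ analogous to Remark \ref{no unexp for genl pts}. For part (ii), I must establish that $m = 1$ and $d < m$ are genuinely impossible in every $\PP^n$: the constraint $d \geq m$ should follow from the same observation that when $d < m$ no hypersurface of degree $d$ can have multiplicity $m$ at a point, and $m \geq 2$ from the fact that imposing a single general simple point always cuts the dimension by exactly one. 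Assembling these lower-bound arguments cleanly, and making sure the plane-case exclusions $m = 2$ and $d = m$ are argued rigorously rather than by appeal to examples, is the part that will demand the most care.
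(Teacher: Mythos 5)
Your part (ii) is correct and is exactly the paper's route: Proposition \ref{Pn} (transferred to finite point sets as in Corollary \ref{curves 1}) covers $d=m$, Corollary \ref{add S} with $e=d-m$ covers $d>m$, and the necessity of $d\geq m\geq 2$ is the easy observation you give. The genuine problems are all in part (i), and the first is an outright error rather than a missing detail: you claim that for $d\leq m$ a general fat point of multiplicity $m$ ``forces the zero curve,'' with the conditions ``trivially independent for dimension reasons.'' That is true for $d<m$ but false for $d=m$: there are plenty of nonzero curves of degree $d$ with a point of multiplicity $d$ at $P$, namely the cones (unions of $d$ lines through $P$), and some of them do contain $Z$. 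The paper's actual argument is that by B\'ezout any such curve must contain the line from $P$ to each point of $Z$, hence \emph{is} the cone over $Z$ with vertex $P$; writing $d=\deg Z$, the $d$ points of $Z$ impose independent conditions on forms of degree $d$, so
$$\dim[I_Z]_d-\binom{d+1}{2}=\binom{d+2}{2}-d-\binom{d+1}{2}=1,$$
which equals the actual dimension of the system (the cone alone). So the cone exists but is \emph{expected}; the conclusion is reached by a Bezout-plus-count argument, not by the system being zero, and your proposal has no substitute for it.

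There are two further gaps in part (i). For the exclusion of $m=2$, your suggestion of ``a dimension-count argument analogous to Remark \ref{no unexp for genl pts}'' cannot work: that remark concerns \emph{general} $Z$, whereas here $Z$ is an arbitrary, possibly very special, configuration, and the assertion that a general double point imposes independent conditions on $[I_Z]_d$ for every finite $Z$ in characteristic $0$ is a substantive theorem; the paper disposes of it by citing Akesseh \cite{A} and \cite{FGST}. For sufficiency when $d>m+1$, your plan has no working mechanism: the base examples — the $B_3$ dual points \cite{DIV}, the supersolvable arrangements classified in \cite{DMO}, and the examples from \cite[Proposition 6.15]{CHMN} — all have $d=m+1$, and ``taking suitable sub-configurations'' cannot raise $d$ relative to $m$. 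What the paper does (and what you need) is to rerun the argument of Corollary \ref{add S} in the plane, where it works even though the cone construction does not: start from $Z_0$ admitting an unexpected curve of degree $m+1$ and multiplicity $m$, adjoin a plane curve $A$ of degree $d-m-1$ avoiding $Z_0$ (and automatically avoiding the general point $P$), and add enough points of $A$ to $Z_0$ that $A$ becomes a fixed component in degree $d$; the union of $A$ with the unexpected curve for $Z_0$ is then the desired unexpected curve of degree $d$ and multiplicity $m$. Finally, note that the base cases themselves must be assembled for every $m\geq 3$ from several distinct sources ($B_3$ for $m=3$, a specific supersolvable arrangement for $m=4$, \cite{CHMN} for $m\geq 5$), a point your sketch leaves open.
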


\begin{proof}
For (i), we first note that there is an unexpected curve of degree $m+1$ and multiplicity $m$ at a general point for each $m \geq 3$. Indeed, 

\begin{itemize}

\item $m \leq 2$. It was shown by Akesseh \cite{A} that this occurs only 
in characteristic 2. (See also \cite{FGST} for a different proof that shows that it does not occur in characteristic zero.)

\item $m=3$. This comes from the dual of the $B_3$ arrangement \cite{DIV}. 

\item $m=4$. Consider the line arrangement defined by the linear factors of
$$xyz(x^3-y^3)(x^3-z^3)(y^3-z^3).$$ This is a supersolvable arrangement (see definition in \S\ref{root system section}), so Theorem 3.7 of \cite{DMO} applies. In particular, the 
dual points give a reduced set of 12 points which admit an unexpected curve with $d=5$ and $m=4$.

\item $m=5$. This follows from \cite{CHMN} Proposition 6.15, taking $k=2$. 

\item $m \geq 6$. This comes from the points dual to the Fermat arrangement, by \cite{CHMN} Proposition 6.15, taking $t \geq 5$.

\end{itemize}

\noindent It is  clear that a set of points $Z$ in $\PP^2$ cannot admit an unexpected curve  whose degree and multiplicity at a 
general point $P$ are equal (unlike what we have seen in $\PP^3$). Indeed, in this situation the line joining $P$ to any point of $Z$ 
must be a component of such an unexpected curve, so the unexpected curve is a cone with vertex $P$ over the points of $Z$. 
Let $d = \deg Z$ be the degree of this curve. Then $Z$ imposes independent conditions on curves of degree $d$, so $\dim [I_Z]_d = \binom{d+2}{2} - d$. Then 
\[
\dim [I_Z]_d - \binom{(d-1)+2}{2} = \binom{d+2}{2} - d - \binom{d+1}{2} = (d+1)-d = 1
\]
so in fact the cone is not unexpected. Thus  $m < d$.

With this preparation, we use the same argument as was used to prove Corollary \ref{add S}. Let $Z_0$ be a set of points admitting an 
unexpected curve of degree $m+1$ and multiplicity $m$ at a general point $P$ and let $A$ be a plane curve of degree $d-m-1$ not 
passing through any point of $Z_0$. Since $P$ is general, it does not lie on $A$. Then choosing sufficiently many points on $A$ gives 
a unique unexpected curve of degree $d$ and multiplicity $m$ at $P$. (A similar construction also was used in \cite[Proposition 6.7]{DIV},
but see \cite[Example 7.4]{CHMN}.)

Part (ii) follows from Proposition \ref{Pn} and Corollary \ref{add S}.
\end{proof}

We end this section with an application.  There is an interesting interpretation of these results in terms of the Strong and Weak Lefschetz Properties. 
We first recall the definitions. For these definitions we maintain the assumptions on the polynomial ring $R$, but in fact all we need is that $\KK$ be an infinite field.

\begin{definition}
Let $R/I$ be an artinian $\KK$-algebra and let $L$ be a general linear form. Then $R/I$ satisfies the {\em Weak Lefschetz Property (WLP)} in 
degree $i$ if $\times L : [R/I]_i \rightarrow [R/I]_{i+1}$ has maximal rank, and we say that $R/I$ satisfies the {\em Strong Lefschetz Property 
(SLP)} in degree $i$ with range $k$ if $\times L^k : [R/I]_i \rightarrow [R/I]_{i+k}$ has maximal rank. We say that $R/I$ satisfies WLP (resp.\ SLP) if it does so for all $i$ (resp. for all $i$ and $k$). 
\end{definition}

Thus SLP failing in degree $i$ with range $k$ means that $\times L^k : [R/I]_i \rightarrow [R/I]_{i+k}$ does not 
have maximal rank, and WLP failing in degree $i$ is the same as SLP failing in degree $i$ with range 1.
The result \cite[Theorem 5.1]{DIV} and the remarks that follow the theorem connect the failure of SLP in degree $i$ with range $k$ to the occurrence of a form
on $\PP^n$ of degree $d=i+k$ with a general point of multiplicity $i+1$. In the specific case of $n=k=2$,
\cite[Theorem 7.5]{CHMN} and \cite[Proposition 16]{DI} establish connections between the occurrence
of failures of SLP and existence of unexpected curves (or, in the case of \cite{DI}, something equivalent to the existence of an unexpected curve).
Generalizing the result for unexpected curves to unexpected hypersurfaces, we get the following result, which can be shown to be equivalent to a 
special case  of \cite[Theorem 13]{DI} (which in turn generalizes \cite[Theorem 5.1]{DIV}).
For the reader's convenience, we include a direct proof.

\begin{proposition} \label{SLPvUH}
Let $L_1,\ldots, L_r$ be distinct linear forms on $\PP^n$, and let $Z$ be the set
of points in $\PP^n$ dual to the hyperplanes defined by the $L_i$. Fix integers $d\geq m>1$.
Then the following are equivalent:
\begin{enumerate}
\item[(a)] $Z$ has an unexpected hypersurface of degree $d$ with a general point $P$ of multiplicity~$m$;
\item[(b)] $R/(L_1^{d},\ldots, L_r^{d})$ fails SLP in degree $i=m-1$ with range $k=d-m+1$.
\end{enumerate}
\end{proposition}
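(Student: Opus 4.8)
The plan is to pass through Macaulay duality (apolarity), which converts both the vanishing conditions defining $Z$ and the multiplicity condition at $P$ into statements about orthogonal complements in $[R]_d$, and then to read off the rank of the Lefschetz map directly from the resulting dimension count. Throughout, write $I=(L_1^{d},\dots,L_r^{d})$ and $A=R/I$, and note that $\langle L_1^d,\dots,L_r^d\rangle = [I]_d$ since $I$ is generated in degree $d$.

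First I would fix the apolarity pairing $\langle\,,\,\rangle\colon [R]_d\times [R]_d\to\KK$, which is perfect because $\operatorname{char}\KK=0$, and under which $\langle G,L^{d}\rangle$ is a nonzero scalar multiple of the evaluation $G(a)$ whenever $L=\sum a_i x_i$. Two identifications then follow from standard inverse-system facts. Since $Q_i$ is the point dual to $\{L_i=0\}$, a form $G\in[R]_d$ vanishes on $Z$ exactly when it is apolar to each $L_i^{d}$, so $[I_Z]_d=[I]_d^{\perp}$; in particular $\dim[I_Z]_d=\dim[A]_d$. Writing $\ell$ for the linear form whose dual point is $P$, the fat-point condition becomes $[I_{mP}]_d=(\ell^{d-m+1}[R]_{m-1})^{\perp}$, the Emsalem--Iarrobino description of the inverse system of $(I_P)^{m}$ in degree $d$; note that $P$ general means $\ell$ is a general linear form, matching the $L$ in the definition of SLP.

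Taking the intersection and using $U^{\perp}\cap W^{\perp}=(U+W)^{\perp}$, I obtain
\[
[I_Z\cap I_{mP}]_d=\bigl([I]_d+\ell^{d-m+1}[R]_{m-1}\bigr)^{\perp},
\]
so that $\dim[I_Z\cap I_{mP}]_d=\binom{n+d}{n}-\dim\bigl([I]_d+\ell^{d-m+1}[R]_{m-1}\bigr)$. On the Lefschetz side, since $m-1<d$ forces $[I]_{m-1}=0$ and hence $[A]_{m-1}=[R]_{m-1}$, the image of $\times\,\ell^{d-m+1}\colon [A]_{m-1}\to[A]_d$ is $([I]_d+\ell^{d-m+1}[R]_{m-1})/[I]_d$, whose dimension is $\dim([I]_d+\ell^{d-m+1}[R]_{m-1})-\dim[I]_d$. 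Combining the two computations with $\dim[I_Z]_d=\binom{n+d}{n}-\dim[I]_d$ gives the clean identity
\[
\dim[I_Z\cap I_{mP}]_d=\dim[I_Z]_d-\operatorname{rank}\bigl(\times\,\ell^{d-m+1}\bigr).
\]

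Finally I would feed this identity into both definitions. The map has maximal rank precisely when $\operatorname{rank}=\min\bigl(\dim[A]_{m-1},\dim[A]_d\bigr)=\min\bigl(\binom{n+m-1}{n},\dim[I_Z]_d\bigr)$, while unexpectedness of $Z$ reads $\dim[I_Z\cap I_{mP}]_d>\max\{0,\dim[I_Z]_d-\binom{n+m-1}{n}\}$. Substituting the identity and using the elementary identity $\dim[I_Z]_d-\max\{0,\dim[I_Z]_d-\binom{n+m-1}{n}\}=\min\bigl(\binom{n+m-1}{n},\dim[I_Z]_d\bigr)$ turns unexpectedness into $\operatorname{rank}<\min\bigl(\binom{n+m-1}{n},\dim[I_Z]_d\bigr)$, i.e.\ failure of maximal rank, which is exactly failure of SLP in degree $i=m-1$ with range $k=d-m+1$. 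The main thing to get right---and the only step that is not formal---is the pair of apolarity identifications, where characteristic zero is essential (it makes the pairing perfect and keeps the evaluation constants nonzero) and where one must correctly match ``multiplicity $m$ at the general point $P$'' with apolarity to $\ell^{d-m+1}[R]_{m-1}$; the remaining dimension bookkeeping is routine.
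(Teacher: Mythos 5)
Your proof is correct and follows essentially the same route as the paper: both hinge on Macaulay duality (the Emsalem--Iarrobino inverse system facts) to identify $\dim [I_Z\cap I_P^m]_d$ with the dimension of the cokernel of $\times L^{d-m+1}\colon [R/I]_{m-1}\to [R/I]_d$, and then conclude by the same dimension bookkeeping. The only difference is presentational: you spell out the duality via orthogonal complements and sums of subspaces, where the paper invokes the exact sequence and cites Emsalem--Iarrobino directly.
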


\begin{proof}
Let $L$ be a general linear form. Consider the exact sequence
\begin{equation} \label{LES}
\dots \rightarrow [R/(L_1^d, \dots, L_{r}^d)]_{m-1} \stackrel{\times L^{d-m+1}}{\longrightarrow} [R/(L_1^{d},\dots,L_{r}^{d})]_{d} \rightarrow
[R/(L_1^{d},\dots,L_{r}^{d},L^{d-m+1})]_{d} \rightarrow 0.
\end{equation}
Let $P$ be the general point of $\PP^n$ dual to $L$. By Macaulay duality \cite{EI} and exactness, 
\[
\begin{array}{rcl}
\dim [I_Z \cap I_P^{m}]_{d} & = &  \dim [R/(L_1^{d} , \dots, L_r^{d},L^{d-m+1})]_{d} \\ \\
& \geq & \dim [R/(L_1^d,\dots, L_r^d)]_d - \dim [R/(L_1^d,\dots,L_r^d)]_{m-1} \\ \\
& = & \displaystyle \dim [I_Z]_d - \binom{n+m-1}{n}.
\end{array}
\]
Since (a) holds if and only if $\dim [I_Z \cap I_P^{m}]_{d}>\max(0, \dim [I_Z]_d - \binom{n+m-1}{n})$
if and only if $\dim [R/(L_1^{d} , \dots, L_r^{d},L^{d-m+1})]_{d}>\max(0, \dim [R/(L_1^d,\dots, L_r^d)]_d - \dim [R/(L_1^d,\dots,L_r^d)]_{m-1})$
if and only if (b) holds, the result follows.
\end{proof}

Note that a set of points in $\PP^n$ dual to a set of general linear forms is, in particular, in linear general position. 
For a set of points in $\PP^2$ in linear general position (i.e., no three on a line), \cite{CHMN} shows that there is no 
unexpected curve of any degree, hence the corresponding ideals of powers of linear forms do not fail SLP in range 2.
This is in contrast to the case for $n>2$.
Indeed, we now give a result showing failure of WLP for arbitrarily many linear forms in four variables whose dual points are in linear general position 
(but not general), followed in Corollary \ref{WLP Pn cons} by a similar but weaker result for forms in any number of variables $\geq 4$.
(For $\PP^n$ with $n>2$, a few papers have studied the question of WLP for ideals generated by powers of {\em general} linear forms, 
but most such results have focused on a small number of linear forms (e.g. \cite{HSS}, \cite{MMN}, \cite{SS}).)

\begin{corollary} \label{WLP for curves in P3}

Let $C$ be a reduced, irreducible, non-degenerate curve of degree $d \geq 3$ in $\PP^3$. Let $k \geq d$ be a positive integer. 
Let $Z$ be any set of $m \geq dk+1$ points of $C$. Let $L_1,\dots,L_m$ be the linear forms dual to the points of $Z$. In particular, $L_1,\dots,L_m$ 
can be chosen so that no four vanish on a point (i.e. the points of $Z$ are in linear general position). Then $R/(L_1^k,\dots,L_m^k)$ fails the WLP in degree $k-1$.

\end{corollary}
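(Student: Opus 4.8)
The plan is to deduce the corollary by combining Corollary~\ref{higher deg} with Proposition~\ref{SLPvUH}, the only real work being a B\'ezout argument that lets me replace the curve $C$ by the finite point set $Z$. The guiding observation is a numerical one: in the dictionary of Proposition~\ref{SLPvUH}, an unexpected hypersurface of degree $D$ and multiplicity $M$ corresponds to a failure of SLP for the ideal of $D$-th powers in degree $i = M-1$ with range $\kappa = D-M+1$, so a \emph{WLP} failure (the case $\kappa = 1$) is exactly the shadow of an unexpected hypersurface whose degree equals its multiplicity. Since Corollary~\ref{higher deg} manufactures unexpected surfaces of degree $k$ and multiplicity $k$ on $C$, I expect to land on a WLP failure in degree $k-1$ once unexpectedness is transported to $Z$ and Proposition~\ref{SLPvUH} is applied with $D = M = k$ and $n = 3$.

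First I would carry out the transport, which is where the hypothesis $m \geq dk+1$ is used. As $C$ is irreducible of degree $d$, any surface of degree $k$ not containing $C$ meets $C$ in at most $dk$ points by B\'ezout; since $Z \subset C$ has at least $dk+1$ points, every degree-$k$ form vanishing on $Z$ vanishes on all of $C$, giving $[I_Z]_k \subseteq [I_C]_k$. The reverse inclusion is automatic from $Z \subset C$, so $[I_Z]_k = [I_C]_k$, and in particular $\dim[I_Z]_k = \dim[I_C]_k$. Moreover $I_C \subseteq I_Z$ forces $[I_C \cap I_P^k]_k \subseteq [I_Z \cap I_P^k]_k$ for a general point $P$, whence
\[
\dim[I_Z \cap I_P^k]_k \;\geq\; \dim[I_C \cap I_P^k]_k \;>\; \max\Big\{0,\ \dim[I_C]_k - \binom{k+2}{3}\Big\} \;=\; \max\Big\{0,\ \dim[I_Z]_k - \binom{k+2}{3}\Big\},
\]
the middle strict inequality being precisely the unexpectedness of the degree-$k$, multiplicity-$k$ surface for $C$ supplied by Corollary~\ref{higher deg}. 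Thus $Z$ itself admits an unexpected surface of degree $k$ with multiplicity $k$ at $P$ (this is essentially the content of the remark following Corollary~\ref{higher deg}).

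It then remains to invoke Proposition~\ref{SLPvUH}. The $L_i$ are distinct linear forms, being dual to the distinct points of $Z$, and the hypothesis $D \geq M > 1$ holds since $D = M = k \geq d \geq 3$. The proposition converts the unexpected surface of degree $k$ and multiplicity $k$ for $Z$ into the statement that $R/(L_1^k,\dots,L_m^k)$ fails SLP in degree $i = k-1$ with range $\kappa = 1$, i.e.\ fails WLP in degree $k-1$. For the parenthetical claim, four of the $L_i$ vanish at a common point exactly when the four dual points of $Z$ are coplanar, so ``no four $L_i$ vanish at a point'' means $Z$ is in linear general position; because $C$ is irreducible and non-degenerate of degree $d \geq 3$, a sufficiently general set of $m$ points on $C$ is in linear general position, so such a $Z$ can be chosen.

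The main obstacle is the B\'ezout step yielding $[I_Z]_k = [I_C]_k$, as everything else is bookkeeping through the two cited results; irreducibility of $C$ is essential, since it is what forces a degree-$k$ surface meeting $C$ in more than $dk$ points to contain $C$ entirely.
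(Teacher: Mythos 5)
Your proof is correct and follows essentially the same route as the paper's: Corollary \ref{higher deg} supplies the unexpected degree-$k$, multiplicity-$k$ surface for $C$, B\'ezout (via $|Z|\geq dk+1$ and irreducibility of $C$) gives $[I_Z]_k=[I_C]_k$ so unexpectedness transfers to $Z$, and Proposition \ref{SLPvUH} with degree equal to multiplicity converts this into the WLP failure in degree $k-1$. The only difference is that you spell out the B\'ezout step and the linear-general-position claim in more detail than the paper does.
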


\begin{proof}
The statement about linear general position is immediate since $C$ is reduced, irreducible and non-degenerate. 
From Corollary \ref{higher deg} we know that $C$ has an unexpected surface of degree $k$ with a general point $P$ of multiplicity $k$, so 
we have $\dim [I_C \cap I_P^k]_k>\max(0,\dim [I_C]_k - \binom{k+2}{3})$.
But by B\'ezout's theorem we have $[I_Z]_k = [I_C]_k$ so we obtain $\dim [I_Z \cap I_P^k]_k>\max(0,\dim [I_Z]_k - \binom{k+2}{3})$,
hence $Z$ has an unexpected surface of degree $k$ with multiplicity $k$ at $P$, so 
$R/(L_1^k,\dots,L_m^k)$ fails the WLP in degree $k-1$ by Proposition \ref{SLPvUH}.
\end{proof}

\begin{example}
Let $Z$ consist of a set of 31 points on a twisted cubic $C$. Let $L_1,\dots,L_{31}$ be the linear forms dual to these points. Then for each $3 \leq k \leq 10$,  the algebra
\[
R/(L_1^k,\dots,L_{31}^k)
\]
fails the WLP from degree $k-1$ to degree $k$.

We remark that this does not mean that these are the only powers that fail WLP or that the given degrees are the only places where it fails. 
Experimentally  with CoCoA \cite{cocoa} we have considered the case of 31 points on the twisted cubic as above, but allowed different $k$. 
When $k=2$ the algebra has WLP. When $3 \leq k \leq 10$ it fails from degree $k-1$ to degree $k$ as claimed, but also in certain other 
degrees (depending on $k$). For $k \geq 11$ it still fails in several degrees, but now it does {\em not} fail from degree $k-1$ to $k$.

\end{example}

The following is a slightly weaker  analog of Corollary \ref{WLP for curves in P3} for $\PP^n$.

\begin{corollary} \label{WLP Pn cons}
Let $n \geq 4$ and $k \geq 3$. Set 
\[
f(n, k) = \begin{cases}
\binom{n+k}{n} - \binom{n+k-2}{n} - \binom{n + \frac{k}{2}}{n} + \binom{n -2 + \frac{k}{2}}{n} & \text{ if $k$ is even} \\
\binom{n+k}{n} - \binom{n+k-2}{n} - 2 \binom{n + \frac{k-1}{2}}{n} +  2 \binom{n + \frac{k-3}{2}}{n-1} & \text{ if $k$ is odd.}
\end{cases}
\]
Choose any integer $N \geq f(n,k)$. Then there exist linear forms $L_1,\dots,L_N \in \KK[x_0,\dots,x_n] = R$ satisfying

\begin{itemize}

\item no $n+1$ of the linear forms have a common zero, and

\item $R/(L_1^k,\dots,L_N^k)$ fails the WLP from degree $k-1$ to degree $k$.

\end{itemize}
\end{corollary}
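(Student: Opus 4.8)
The plan is to reduce the statement, via Proposition \ref{SLPvUH}, to the construction of a suitable point set carrying an unexpected hypersurface, and then to realize that point set on a complete intersection using the cone construction of Proposition \ref{Pn}. First I would apply Proposition \ref{SLPvUH} with $d=m=k$; this is legitimate since $k\geq 3>1$, and the range is then $d-m+1=1$, so ``SLP failing in degree $k-1$ with range $1$'' is exactly WLP failing from degree $k-1$ to degree $k$. Hence condition (b) of that proposition is equivalent to condition (a): the dual point set $Z$ admits an unexpected hypersurface of degree $k$ with a general point of multiplicity $k$. Moreover the geometric hypothesis ``no $n+1$ of the $L_i$ have a common zero'' is precisely the statement that $Z$ is in linear general position. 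Thus it suffices to produce, for each $N\geq f(n,k)$, a set $Z$ of $N$ points in $\PP^n$ in linear general position carrying an unexpected hypersurface of degree $k$ and multiplicity $k$.

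For even $k$ I would take $V$ to be a general complete intersection of a quadric and a form of degree $k/2$. Since $k/2\geq 2$, this $V$ is a reduced, equidimensional, non-degenerate, codimension-two variety of degree $k$, so by Proposition \ref{Pn} its cone $S_P$ over a general point $P$ is an unexpected hypersurface of degree $k$ and multiplicity $k$. Reading off the Hilbert function from the Koszul resolution of the complete intersection shows that $f(n,k)=\dim_\KK[R/I_V]_k$, which is exactly the number of general points of $V$ needed to force $[I_Z]_k=[I_V]_k$ (complete intersections being arithmetically Cohen--Macaulay, and general points on them imposing independent conditions); for such $Z$ we get $\dim[I_Z]_k=\dim[I_V]_k\leq\binom{k-1+n}{n}$ by Remark \ref{low deg rmk}, so the expected dimension is $0$ while $S_P$ forces the true dimension to be positive, giving unexpectedness. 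For odd $k$ there is no codimension-two complete intersection of degree exactly $k$ cut out by a quadric (as $k$ is not twice an integer), so I would instead drop to a complete intersection of degree $k-1$ and raise its cone to degree $k$ and multiplicity $k$ by the higher-degree device of Corollary \ref{higher deg}, namely by multiplying $S_P$ by a general linear form through $P$ (producing the family $S_P\cdot[I_P]_1$ of unexpected hypersurfaces). This extra linear factor is what makes the construction less efficient and is the source both of the factor $2$ and of the $\binom{\cdot}{n-1}$ term in $f(n,k)$, as well as of the phrase ``slightly weaker''; matching the displayed formula is then a routine, if parity-sensitive, Hilbert-function computation. To handle every $N\geq f(n,k)$ uniformly, I would add further general points of $V$: once $[I_Z]_k=[I_V]_k$, enlarging $Z$ inside $V$ does not change $[I_Z]_k$, so unexpectedness persists.

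The step I expect to be the main obstacle is verifying linear general position. For $n\geq 4$ one has $\dim V=n-2\geq 2$, so a general hyperplane meets $V$ in a positive-dimensional set, and ``no $n+1$ of the points on a hyperplane'' is therefore \emph{not} automatic for points of $V$. I would instead choose the $N$ points one at a time: having selected points in linear general position, the hyperplanes spanned by their $n$-element subsets are finite in number and, since $V$ is non-degenerate, none of them contains $V$, so a general next point of $V$ avoids all of them and preserves linear general position; the same genericity simultaneously guarantees $[I_Z]_k=[I_V]_k$. Once $Z$ is in hand, Proposition \ref{SLPvUH} converts its unexpected hypersurface back into the asserted failure of WLP from degree $k-1$ to degree $k$, completing the argument.
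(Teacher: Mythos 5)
Your reduction via Proposition \ref{SLPvUH} (with $d=m=k$) and your treatment of even $k$ coincide with the paper's own proof: there too $V$ is a general complete intersection of a quadric and a form of degree $k/2$, the even-$k$ value of $f(n,k)$ is read off the Koszul resolution as $\dim_\KK [R/I_V]_k$, a general set of $N\geq f(n,k)$ points on $V$ satisfies $[I_Z]_k=[I_V]_k$, Proposition \ref{Pn} supplies the unexpected cone, and linear general position follows from irreducibility and non-degeneracy of $V$ (your point-by-point argument is a more careful version of what the paper simply asserts).

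For odd $k$, however, you diverge from the paper, and your route has a genuine gap. The paper does \emph{not} drop to degree $k-1$: it constructs an irreducible, non-degenerate, arithmetically Cohen--Macaulay codimension-two subvariety of degree exactly $k$ by linking a codimension-two linear space inside a complete intersection of type $(2,\frac{k+1}{2})$, so that Proposition \ref{Pn} applies uniformly in both parities, and the odd-$k$ value of $f(n,k)$ is computed from the resolution $0 \to R(-\frac{k+3}{2})^2 \to R(-\frac{k+1}{2})^2\oplus R(-2)\to R\to R/I_V\to 0$ of that linked variety. Your substitute---a degree-$(k-1)$ complete intersection whose cone over $P$ is multiplied by a linear form through $P$---fails outright at $k=3$, which the hypothesis $k\geq 3$ allows: an irreducible non-degenerate variety of degree $2$ is a quadric hypersurface in its linear span, so no irreducible non-degenerate codimension-two variety of degree $2$ exists in $\PP^n$; the type $(2,1)$ complete intersection is degenerate, and the only non-degenerate degree-$2$ alternative (two skew codimension-two linear spaces) destroys linear general position, since too many of your points would lie in a hyperplane. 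Handling $k=3$ is exactly what the paper's liaison construction is for.

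Furthermore, your claim that matching the displayed formula is ``a routine, if parity-sensitive, Hilbert-function computation'' is not correct: for odd $k$, $f(n,k)$ is the degree-$k$ Hilbert function value of the paper's degree-$k$ linked variety, not of your degree-$(k-1)$ complete intersection $V'$, and your argument needs $N\geq \dim_\KK[R/I_{V'}]_k$ points to force $[I_Z]_k=[I_{V'}]_k$. You never compare this threshold with $f(n,k)$, and taking the displayed formula at face value the comparison can fail: for $(n,k)=(4,13)$ one gets $f(4,13)=\binom{17}{4}-\binom{15}{4}-2\binom{10}{4}+2\binom{9}{3}=763$, while $\dim_\KK[R/I_{V'}]_{13}=\binom{17}{4}-\binom{15}{4}-\binom{11}{4}+\binom{9}{4}=811$ for the type $(2,6)$ complete intersection, so the values $763\leq N\leq 810$ required by the statement are not covered by your construction. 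Finally, a smaller point: Corollary \ref{higher deg}, which you invoke to raise the multiplicity from $k-1$ to $k$, is proved in the paper only for curves in $\PP^3$; using it in $\PP^n$ would require redoing its numerical verification, which you do not supply.
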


\begin{proof}
We recall that we can always find an irreducible, non-degenerate subvariety $V$  of codimension 2 and degree $k$ in $\PP^n$. 
In fact, if $k$ is even we take $V$ as an intersection of a general form of degree $\frac{k}{2}$ and a general quadric. If $k$ is odd we 
take $V$ as a general arithmetically Cohen-Macaulay subscheme with a minimal free resolution of the form 
\[
0 \rightarrow R(- {\textstyle \frac{k+3}{2} })^2 \rightarrow R(- {\textstyle \frac{k+1}{2} })^2 \oplus R(-2) \rightarrow R \rightarrow R/I_V \rightarrow 0
\]
(obtained, for example, by linking from a linear space of codimension 2 using the complete intersection of a quadric and a form of degree $\frac{k+1}{2}$). 
Using this sequence, or the Koszul resolution if $k$ is even, we obtain
\[
\dim [R/I_V]_k = f (n, k).
\]
Since $V$ is irreducible, it makes sense to speak of a general set of points on $V$. Let $Z$ be a general set of $N$ points on $V$. From the generality of $Z$ we have
\[
\dim [R/I_Z]_k = \min \{ \dim [R/I_V]_k , |Z| \},
\]
hence $\dim [I_V]_k = \dim [I_Z]_k$. By Proposition 2.4 we then have that the cone over $V$ with vertex at a general point $P$ is an unexpected hypersurface of degree $k$ and multiplicity $k$ at $P$.
Then the argument in the proof of Corollary 2.16 can be extended to our situation to show that the multiplication from degree $k-1$ to 
degree $k$ by a general linear form has an unexpectedly large cokernel, i.e. maximal rank does not hold. Since $Z$ is general on $V$ 
and $V$ is irreducible and non-degenerate, $Z$ is a set of points in linear general position, and this implies the  condition on the linear forms not to have a common zero.
\end{proof}

The above results all focus on failure of the WLP. We can also give a result about failure of the SLP with ranges
bigger than 1.

\begin{corollary} \label{fail SLP}
Let $R = \KK[x_0,\dots,x_n]$. Fix positive integers $d\geq m$. Then there exists an ideal $I = (L_1^d,\dots, L_e^d)$ (for suitable $e$) for which
\[
\times L^{d-m+1} : [R/I]_{m-1} \rightarrow [R/I]_d
\]
fails to have maximal rank if and only if one of the following holds.
\begin{itemize}

\item[(i)] We have $n = 2$ and $d > m > 2$.

\item[(ii)] We have $n \geq 3$ and $d \geq m \geq 2$.

\end{itemize}
In both cases this means that $R/I$ fails the SLP in degree $m-1$ and range $d-m+1$.
\end{corollary}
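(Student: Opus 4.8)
The plan is to obtain this corollary as a formal consequence of the dictionary in Proposition \ref{SLPvUH} and the classification in Theorem \ref{m<d}, after separately treating the degenerate value $m=1$, which falls outside the hypotheses of Proposition \ref{SLPvUH}.

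First I would dispose of the last sentence, which is immediate from the definitions: since $(m-1)+(d-m+1)=d$, the map $\times L^{d-m+1}:[R/I]_{m-1}\to[R/I]_d$ is exactly the multiplication appearing in the definition of SLP in degree $m-1$ with range $d-m+1$, so its failure of maximal rank is by definition the failure of SLP there. Thus only the ``if and only if'' needs proof. Next I would record that for $m=1$ the map never fails maximal rank. Here the source $[R/I]_0=\KK$ is one-dimensional and the map is $\times L^{d}$; it has maximal rank unless it is the zero map while $[R/I]_d\neq 0$, that is, unless $I_d\neq[R]_d$ yet $L^d\in I_d$. Since the $d$-th powers of linear forms span $[R]_d$, whenever $I_d\neq[R]_d$ the set of $L$ with $L^d\in I_d$ is a proper closed subset, so a general $L$ yields an injection and maximal rank holds. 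Hence no failure occurs for $m=1$, which is consistent with the fact that neither (i) nor (ii) permits $m=1$.

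For $m\geq 2$ I would apply Proposition \ref{SLPvUH}, using projective duality to pass between hyperplanes and points. Assigning to each hyperplane its dual point sets up a bijection between finite sets of distinct linear forms $L_1,\dots,L_e$ and finite sets of points $Z\subset\PP^n$, and under this bijection Proposition \ref{SLPvUH} asserts that $R/(L_1^d,\dots,L_e^d)$ fails SLP in degree $m-1$ with range $d-m+1$ if and only if $Z$ admits an unexpected hypersurface of degree $d$ with a general point of multiplicity $m$. Taking the existential quantifier on both sides, an ideal $I$ of the prescribed form exhibiting this failure exists if and only if some finite set of points in $\PP^n$ admits such an unexpected hypersurface. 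By Theorem \ref{m<d} the latter occurs precisely when $n=2$ and $d>m>2$, or when $n\geq 3$ and $d\geq m\geq 2$, which are exactly conditions (i) and (ii).

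Beyond citing these two earlier results, the only genuine content is the $m=1$ argument and the observation that the two existential quantifiers (over admissible ideals, and over finite point sets) correspond under duality. I expect the one point needing a word of care to be the spanning of $[R]_d$ by $d$-th powers of linear forms invoked in the $m=1$ case; this is standard (a consequence of apolarity, or of the non-degeneracy of the $d$-uple Veronese), so I anticipate no serious obstacle.
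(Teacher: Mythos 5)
Your proof is correct and takes essentially the same route as the paper, whose entire proof of this corollary is the single sentence that it ``follows immediately from Theorem \ref{m<d} by applying Proposition \ref{SLPvUH}.'' Your extra care with the case $m=1$ (which falls outside the hypothesis $d\geq m>1$ of Proposition \ref{SLPvUH}, and so is silently glossed over by the paper's one-line proof) is a genuine, if minor, improvement in rigor rather than a different approach.
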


\begin{proof}
This follows immediately from Theorem \ref{m<d} by applying Proposition \ref{SLPvUH}.
\end{proof}

\begin{remark}
\begin{enumerate}
\item Note that Corollary \ref{fail SLP} (and similarly for the earlier results) is not necessarily about failure of surjectivity. The proof only 
shows that the cokernel of $\times L^{d-m+1}$ is bigger than expected, which means failure of surjectivity if $\dim [R/I]_{m-1} \geq \dim [R/I]_d$, and it means failure of injectivity if $\dim [R/I]_{m-1} \leq \dim [R/I]_d$.

\item In \cite{SS}, Schenck and Seceleanu proved the striking result that for {\em every} ideal of the form $I = (L_1^{a_1},\dots,L_e^{a_e})$ in the ring $\KK[x,y,z]$, the quotient $R/I$ has the WLP. In the case that all $a_i$ are equal to some $d$, we can recover this result from Corollary \ref{fail SLP}. Indeed, this is the case $n=2$. Checking whether WLP holds amounts to considering the case $m=d$ (since we are studying $\times L^1$), and Corollary \ref{fail SLP} says that in this case maximal rank always holds.

\end{enumerate}
\end{remark}


\section{Root system examples} \label{root system section}

The construction used in \S \ref{cone section}  
to get an unexpected hypersurface of degree $d$ with a general point of multiplicity $m$
for a locus $Z$ in $\PP^n$ for $n>2$, is based on $[I_Z]_d$ having a positive dimensional base locus.
There are, as we shall see, examples of finite point sets $Z$ with an unexpected hypersurface 
with $d=m$ and $n>2$ such that the base locus of $[I_Z]_d$ is 0 dimensional. Thus our construction
in \S \ref{cone section} is not the end of the story, since unexpected hypersurfaces
can arise in other ways. The question is where else can one look to find them?

In this section we find new habitats where unexpected hypersurfaces lurk, 
both for $n=2$ and $n>2$, at least some of which for $n>2$
have the property that the base locus of $[I_Z]_d$ is 0-dimensional.

We first became aware of a set of points $Z$ admitting an unexpected curve from an example of \cite{DIV}, where
the set $Z$ consists of the points in projective space corresponding to the roots of the $B_3$ root system. 
The lines dual to $Z$ are shown in Figure \ref{B3Fig}.
This example is interesting for a number of reasons.
The line arrangement comes from a root system. (The 
lines in $\PP^2_\RR$ correspond to the 2 dimensional vector subspaces in $\RR^3$ orthogonal to the roots of the $B_3$ root system,
under the bijective correspondence
between lines in $\PP^2_\RR$ and planes through the origin in $\RR^3$.) 
It is a simplicial real arrangement. (This means that the lines divide the real projective plane into triangles.)
It is extremal. (If $t_k$
denotes the number of points where exactly $k$ lines meet, an inequality of Melchior \cite{Me}
for real arrangements of $d>2$ lines with $t_d=0$ states that $t_2\geq 3+\sum_{k>2}(k-3)t_k$.
For $B_3$ we have $t_2=6$, $t_3=4$ and $t_4=3$, so equality holds.)
It is free (meaning that if $F$ is the product of the linear forms defining the lines dual to the points of $Z$,
then there are no second syzygies for the Jacobian ideal $J_F=(F_x,F_y,F_z)$; i.e.,
the syzygy bundle for $J_F$ is free.)
Its unexpected curve has minimal degree in characteristic 0 (no unexpected curve 
in characteristic 0 has degree 3 or less \cite{A, FGST}).
And it is a supersolvable arrangement. 

A line arrangement in the projective plane is {\em supersolvable} if there is a so-called 
{\em modular} point, i.e., a point $P$ where two or more of the lines meet
such that if $Q$ is any other point where two or more of the lines meet, then the line through
$P$ and $Q$ is a line in the arrangement. 
Thus a supersolvable line arrangement includes the cone over its
crossing points with vertex at any modular point.
The {\em multiplicity} of a point with respect to a line arrangement
is just the number of lines in the arrangement containing the point. When a line arrangement is supersolvable,
every point of maximum multiplicity is modular (but not every modular point 
need have maximum multiplicity)  \cite{AT} (only the arXiv version includes the proof). For the line arrangement $B_3$, shown in Figure \ref{B3Fig}, the center point 
is modular and indeed has maximum multiplicity (no other crossing point has multiplicity more than 4).
The result of \cite{DMO} says the point scheme $Z_A$ dual to the lines of a supersolvable line arrangement $A$ 
has an unexpected curve of degree $m_A$ with respect to $X=(m_A-1)P$ for a general point $P$ 
if and only if $2m_A<d_A$, where $d_A$ is the number 
of lines in the arrangement and $m_A$ is the maximum multiplicity of a point for $A$, 
and in this case the unexpected curve is unique. 
Since $m_{B_3}=4$ and $d_{B_3}=9$, it follows that $Z_{B_3}$
has a unique unexpected curve of degree 4.

The roots of $B_3$ can be defined as the integer vector solutions $(a,b,c)\in\RR^3$
to $1\leq a^2+b^2+c^2\leq2$.
Geometrically, given a cube of side length $2$ aligned with the coordinate axes of
$\RR^3$ and whose center is at the origin
(the cube with vertices $(\pm 1, \pm 1, \pm 1)$), 
these are the vectors pointing from the origin
to the center of each face and to the midpoint of each edge. The roots in pairs correspond to points
in the projectivization $\PP^2_\RR$ of $\RR^3$. Thus the 18 roots give the 9 points
of $Z_{B_3}$, and the lines of the line arrangement $B_3$ are just the projectivizations
of the planes normal to the roots; these lines are the projective duals of the points of $Z_{B_3}$.

\begin{figure}
\begin{center}
\begin{tikzpicture}[line cap=round,line join=round,>=triangle 45,x=.5cm,y=.5cm]
\clip(-3,-3) rectangle (3,3);
\draw [line width=1.pt] (0.,-4.7) -- (0.,4.18);
\draw [line width=1.pt,domain=-4.98:6.62] plot(\x,{(-0.-0.*\x)/1.});
\draw [line width=1.pt,domain=-4.98:6.62] plot(\x,{(-0.-1.*\x)/1.});
\draw [line width=1.pt,domain=-4.98:6.62] plot(\x,{(-0.-1.*\x)/-1.});
\draw [line width=1.pt,domain=-4.98:6.62] plot(\x,{(-1.-1.*\x)/1.});
\draw [line width=1.pt,domain=-4.98:6.62] plot(\x,{(-1.-1.*\x)/-1.});
\draw [line width=1.pt,domain=-4.98:6.62] plot(\x,{(-1.--1.*\x)/-1.});
\draw [line width=1.pt,domain=-4.98:6.62] plot(\x,{(-1.--1.*\x)/1.});
\end{tikzpicture}
\caption{The nine lines of the arrangement $B_3$ (the line $z=0$ at infinity is not shown).}
\label{B3Fig}
\end{center}
\end{figure}
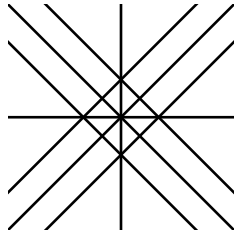

Given the interesting behavior of $B_3$, it is natural to look at other arrangements with similar properties.
As noted above, \cite{DMO} has done this for the case of supersolvable arrangements.
Here we check what happens for arrangements coming from other root systems
$A$ in $\RR^n$, not only for $n=3$ but also for $n>3$.
The set-up then is:
a root system $A$ gives a finite set of vectors of $\RR^n$ for some $n$.
Each root gives a point in $\PP^{n-1}$ and the set of these points for the given root system $A$
gives the point set we denote by $Z_A$. The codimension 1 linear subspaces normal to the
roots define the hyperplanes of the arrangement corresponding to $A$ which we also refer to by $A$.

In principle, given a finite set of points $Z\subset\PP^n$ and a general point $P=[a_0:\cdots:a_n]$,
to find an unexpected hypersurface for $Z+mP$ computationally one takes $P$ to be the generic point
$P=[1:\frac{a_1}{a_0}:\cdots:\frac{a_n}{a_0}]$ and works as usual in the homogeneous coordinate ring
$S=\KK(\frac{a_1}{a_0},\ldots,\frac{a_n}{a_0})[x_0,\ldots,x_n]=
\KK[\PP^n_{\KK(\frac{a_1}{a_0},\ldots,\frac{a_n}{a_0})}]$. 
(We will however abuse notation and typically use $[a_0:\cdots:a_n]$ to denote $P$
and refer to it as a general point.) However, it can be convenient
and more efficient to work in the bi-graded ring $R=\KK[a_0,\ldots,a_n][x_0,\ldots,x_n]$. So we mention here a few 
clarifying but elementary remarks.

All of our rings of interest are contained in the field $\KK(a_0,\ldots,a_n, x_0,\ldots,x_n)$, which is the field of fractions
of a UFD. Thus given a form $F\in S$ of degree $d$ but which is not in $\KK$,
there is (up to scalars in $\KK$) a unique factorization $F=BG/H$, where $B$ and $H$ are relatively prime forms in $\KK[a_0,\ldots,a_n]$ and 
$G\in R$ is bi-homogeneous with no factors of bi-degree $(a,0)$ with $a>0$; its bi-degree is $(t,d)$, where $t=\deg(H)-\deg(B)$. 
We denote $G$ by $F^*$. Given any bi-homogeneous element $G\in R$ of bi-degree $(t,d)$,
we denote $G/a_0^t$ by $G^\circ$. Note that $G^\circ\in S$ is homogeneous
of degree $d$. It need not be true that $(G^\circ)^*=G$ (for example, $(a_1^\circ)^*=(a_1/a_0)^*=1$) nor that
$(F^*)^\circ=F$ (e.g., $((a_1x_1/a_0)^*)^\circ=x_1^\circ=x_1$), but we do have the following useful lemma.

\begin{lemma}\label{StEx} 
Let $S$ and $R$ be as above.
Let $P$ be the point $[1:\frac{a_1}{a_0}:\cdots:\frac{a_n}{a_0}] \in\PP^n_{\KK(\frac{a_1}{a_0},\ldots,\frac{a_n}{a_0})}$.
For any $m \geq 1$, let $I_{mP}$ denote the ideal $(I_P)^m$ in $S$,
and let $J_{mP}$ denote the ideal $(J_P)^m$ in $R$,
where $J_P=(\{a_ix_j-a_jx_i: i\neq j\})$ (hence $(J_P)^m$ is generated by elements of bi-degree $(m,m)$).
Now let $d>0$ and $t \geq 0$, let $F\in S$ be a nonzero form of degree $d$,
and let $G\in R$ be a nonzero, nonconstant bi-homogeneous form of bi-degree $(t,d)$.

\begin{enumerate}
\item[(a)] As ideals in $S$ we have $(F)=((F^*)^\circ)$. 
\item[(b)] If $F$ is irreducible, then so are $F^*$ and $(F^*)^\circ$.
\item[(c)] If $G$ is irreducible, then so are $G^\circ$ and $(G^\circ)^*$, and as ideals in $R$ we have $(G)=((G^\circ)^*)$. 
\item[(d)] If $F\in I_{mP}$, then $d\geq m$ and 
$F^*\in J_{mP}$ so we see that $F^*$ has bi-degree $(s,d)$ for some $s$ with $s\geq m$. 
\item[(e)] If $G\in J_{mP}$, then $G^\circ\in I_{mP}$.
\end{enumerate}
\end{lemma}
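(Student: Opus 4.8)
My plan is to regard both $R=\KK[a_0,\dots,a_n][x_0,\dots,x_n]$ and $S$ as subrings of the UFD $\KK(a_0,\dots,a_n)[x_0,\dots,x_n]$ and to lean on three elementary facts throughout: the units of $S$ are exactly the nonzero scalars $K_0^\times$, where $K_0=\KK(\tfrac{a_1}{a_0},\dots,\tfrac{a_n}{a_0})$ is the coefficient field; an irreducible element of $\KK[a_0,\dots,a_n]$ stays prime in $R$, so a bi-homogeneous form has a well-defined purely-$a$ content and primitive part, and a product of primitive forms is primitive; and both operations are multiplicative up to scalars, $(FF')^*=F^*(F')^*$ and $(GG')^\circ=G^\circ(G')^\circ$, which is immediate once content is collected in the respective factorizations. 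For (a), writing $F=BF^*/H$ from the definition of $*$ gives $F/(F^*)^\circ=Ba_0^t/H$, a nonzero element of $K_0$ (it has $a$-degree $\deg B+t-\deg H=0$), hence a unit of $S$; so $F$ and $(F^*)^\circ$ generate the same ideal.

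For (b) and (c) I would transfer irreducibility across the two operations. By (a), $(F^*)^\circ$ is an associate of $F$, hence irreducible; and in any nontrivial bi-homogeneous factorization $F^*=G_1G_2$ both factors have positive $x$-degree (else $F^*$ would have a purely-$a$ factor), so each $G_i^\circ$ is a nonunit of $S$ and $(F^*)^\circ=G_1^\circ G_2^\circ$ contradicts irreducibility of $(F^*)^\circ$; thus $F^*$ is irreducible. For (c) the point is that an irreducible $G$ with $d>0$ has no purely-$a$ factor, so $G^\circ=a_0^{-t}G$ is already in $*$-reduced form and $(G^\circ)^*=G$ up to a scalar; this gives at once the ideal equality $(G)=((G^\circ)^*)$ in $R$ and the irreducibility of $(G^\circ)^*$, while irreducibility of $G^\circ$ follows by applying multiplicativity of $*$ to a hypothetical factorization $G^\circ=F_1F_2$ (both $F_i$ of positive $x$-degree), which would exhibit $G$ as a product of two nonunits $F_1^*F_2^*$ in $R$.

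Part (e) I would prove directly: expanding a bi-homogeneous $G\in J_{mP}$ as $\sum_k r_k\prod_{l=1}^m\gamma_{k,l}$ with each $\gamma_{k,l}$ a generator $a_ix_j-a_jx_i$ and each summand of bi-degree $(t,d)$, distributing $a_0^t=a_0^{t-m}\cdot a_0^m$ turns each factor into $\gamma_{k,l}/a_0\in I_P$ (a linear form vanishing at $P$) and $r_k$ into $r_k^\circ\in S$, so $G^\circ\in (I_P)^m=I_{mP}$. For (d), the bound $d\ge m$ is just the minimal degree of a nonzero element of $I_{mP}$. To reach $F^*\in J_{mP}$ I would expand $F=\sum_\alpha c_\alpha\psi^\alpha$ over degree-$m$ monomials in the generators $\psi_i=x_i-\tfrac{a_i}{a_0}x_0$ of $I_P$; multiplying by $a_0^m$ converts each $\psi_i$ into the generator $a_0x_i-a_ix_0$ of $J_P$, and clearing the remaining $a$-denominators by a homogeneous $w\in\KK[a_0,\dots,a_n]$ yields $wa_0^mF\in J_{mP}$. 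Writing $wa_0^mF=q(a)\,F^*$ with $q$ the purely-$a$ content (so $q\neq 0$ and $q\notin J_P$, since $J_P\cap\KK[a_0,\dots,a_n]=0$ by the substitution $x_i\mapsto a_i$), it remains only to cancel $q$.

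This cancellation is the step I expect to be the real obstacle: I must know that $q$ is a nonzerodivisor modulo $J_{mP}=(J_P)^m$, equivalently that $(J_P)^m$ is $J_P$-primary. Here the special structure is essential: $J_P$ is the ideal of $2\times2$ minors of the generic matrix $\bigl(\begin{smallmatrix}a_0&\cdots&a_n\\ x_0&\cdots&x_n\end{smallmatrix}\bigr)$, i.e.\ the ideal of maximal minors, for which ordinary powers coincide with symbolic powers and are therefore primary; hence $q\notin J_P=\sqrt{(J_P)^m}$ is a nonzerodivisor modulo $(J_P)^m$ and $F^*\in J_{mP}$ follows, with $s\ge m$ because every nonzero element of $J_{mP}$ has $a$-degree at least $m$. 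One can localize to isolate exactly this difficulty: inverting $\KK[a_0,\dots,a_n]\setminus\{0\}$ turns $J_P$ into the ideal of the point $[a_0:\cdots:a_n]$ in $\PP^n$ over $\KK(a_0,\dots,a_n)$, a complete intersection of the $n$ linear forms $a_0x_i-a_ix_0$ whose powers are visibly primary, so the only danger is an embedded prime of $(J_P)^m$ meeting $\KK[a_0,\dots,a_n]\setminus\{0\}$, and the equality of ordinary and symbolic powers for maximal minors is precisely what rules these out. Everything else reduces to formal manipulation of the $*$ and $\circ$ operations.
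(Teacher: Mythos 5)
Your proof is correct, and for parts (a), (b), (c) and (e) it matches the paper's argument essentially step for step (the unit $Ba_0^t/H$ in (a), transfer of irreducibility via positive $x$-degree factors in (b)--(c), and the term-by-term division by $a_0^t$ in (e)). The genuine difference is in part (d), which is where the paper spends most of its effort. Both arguments share the same skeleton: produce a nonzero $q\in\KK[a_0,\ldots,a_n]$ with $qF^*\in J_{mP}$, then cancel $q$ using that $(J_P)^m$ is $J_P$-primary --- $J_P$ being the ideal of maximal minors of the generic $2\times(n+1)$ matrix with rows $(a_0,\ldots,a_n)$ and $(x_0,\ldots,x_n)$ (the paper cites \cite[Corollary 7.10]{BV}; your appeal to equality of ordinary and symbolic powers of such ideals is the same input) --- together with $q\notin J_P$, which both of you get from $J_P\cap\KK[a_0,\ldots,a_n]=0$. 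Where you diverge is in producing the membership $qF^*\in J_{mP}$: the paper dehomogenizes, translates $P$ to the origin, uses the order-$m$ vanishing there (Taylor expansion) to see that every term carries a monomial of degree at least $m$ in the affine $x$-variables, then translates back and clears denominators, arriving at the explicit multiplier $a_0^{2d}F^*\in J_{mP}$. You instead expand $F$ directly in the degree-$m$ monomials in the generators $x_i-\frac{a_i}{a_0}x_0$ of $I_P$, multiply by $a_0^m$ to convert each factor into the generator $a_0x_i-a_ix_0$ of $J_P$, and clear denominators with a single auxiliary $w\in\KK[a_0,\ldots,a_n]$. This is noticeably shorter and avoids the affine translation computation entirely; the only thing lost is the explicit form of the multiplier, which is irrelevant once primaryness is available. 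Your closing localization remark also correctly isolates why one needs more than ``powers of a complete intersection'': the danger is an embedded prime of $(J_P)^m$ containing a nonzero purely-$a$ polynomial, which is exactly what the Bruns--Vetter result excludes.
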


\begin{proof}
(a) Since $F=BF^*/H=B(F^*)^\circ a_0^t/H$, where $t=\deg(H)-\deg(B)$, we see that
$F$ and $(F^*)^\circ$ differ by $Ba_0^t/H$, which is a unit in $S$.

(b) Note that $F$ and $(F^*)^\circ$ differ by a unit factor in $S$, so one is irreducible if and only if the other is.
Also, $F^*$ by construction has no factors of bi-degree $(f,0)$ with $f>0$, so if $F^*$ fails to be irreducible, it factors as
$F^*=AB$ where $A$ and $B$ have bi-degree $(a,d_A)$ and $(b,d_B)$ with $d_A,d_B>0$ and $d_A+d_B=d$.
But then $(F^*)^\circ=A^\circ B^\circ$ is a product of factors of positive degree and hence not irreducible.

(c) If $G^\circ$ is not irreducible, then $G^\circ=AB$ where both $A$ and $B$ have positive degree $d_A$ and $d_B$, so we have
$G=\alpha A^*\beta B^*$ where $A^*$ and $B^*$ have bi-degrees $(a,d_A)$ and $(b,d_B)$, and $\alpha$ and $\beta$ have bi-degrees
$(t_\alpha,0)$ and $(t_\beta,0)$. Since $R$ is a UFD, the denominators in the right hand side of the expression $G=\alpha A^*\beta B^*$ must
cancel with factors of  the numerators of the expression $\alpha A^*\beta B^*$ and this does not affect the values of $d_A$ or $d_B$, 
so we may assume that $\alpha,\beta, A^*, B^*$ all are in $R$ and $A^*$ and $B^*$ have bi-degrees $(a',d_A)$ and $(b',d_B)$,
where the simplification might have changed $a$ and $b$ but will have left $d_a$ and $d_B$ unchanged. Since $d_A$ and $d_B$ are both positive,
$G$ has nonunit factors so $G$ is not irreducible.
Now note that irreducibility of $G^\circ$ implies that for $(G^\circ)^*$ by (b). Finally, by construction, if $G$ is irreducible with bi-degree $(t,d)$ where $d>0$, then
$G=c(G^\circ)^*$ for some nonzero $c\in\KK$, hence $(G)=((G^\circ)^*)$.

(d) Since $F\in I_{mP}$, by (a) we have $\frac{F^*}{a_0^s}=(F^*)^\circ\in I_{mP}$, where $(s,d)$ is the bi-degree of $F^*$,
and since $d$ is the degree of $(F^*)^\circ$ and since $(F^*)^\circ\in I_{mP}$ we have $d\geq m$. 
Now let 
$$D\big(\frac{a_1}{a_0},\ldots,\frac{a_n}{a_0},\frac{x_1}{x_0},\ldots,\frac{x_n}{x_0}\big)=
F^*\big(\frac{a_0}{a_0},\ldots,\frac{a_n}{a_0},\frac{x_0}{x_0},\frac{x_1}{x_0},\ldots,\frac{x_n}{x_0}\big)=\frac{F^*(a_0,\ldots,a_n,x_0,\ldots,x_n)}{a_0^sx_0^d},$$
so as a rational function $D$ has bi-degree $(0,0)$, but as a polynomial in
$\KK(\frac{a_1}{a_0},\ldots,\frac{a_n}{a_0})[\frac{x_1}{x_0},\ldots,\frac{x_n}{x_0}]$, $D$ has degree $\delta\leq d$.
Moreover, since $x_0\not\in I_P$, we have $D\in I_{mQ}$ where $Q$ is the point $Q=(\frac{a_1}{a_0},\ldots,\frac{a_n}{a_0})$
in the affine open subset $\KK(\frac{a_1}{a_0},\ldots,\frac{a_n}{a_0})^n$ away from $x_0=0$. Now translate $Q$ to the origin; i.e., consider
$$H\big(\frac{a_1}{a_0},\ldots,\frac{a_n}{a_0},\frac{x_1}{x_0},\ldots,\frac{x_n}{x_0}\big)=
D\big(\frac{a_1}{a_0},\ldots,\frac{a_n}{a_0},\frac{x_1}{x_0}+\frac{a_1}{a_0},\ldots,\frac{x_n}{x_0}+\frac{a_n}{a_0}\big).$$
Note that 
$$a_0^{s+d}x_0^dH=a_0^{s+d}x_0^dF^*\big(\frac{a_0}{a_0},\ldots,\frac{a_n}{a_0},\frac{x_0}{x_0},\frac{x_1}{x_0}+\frac{a_1}{a_0},\ldots,\frac{x_n}{x_0}+\frac{a_n}{a_0}\big)=$$
$$F^*\big(a_0,\ldots,a_n,a_0x_0,a_0x_1+a_1x_0,\ldots,a_0x_n+a_nx_0\big)\in R$$
is a bi-homogeneous polynomial of bi-degree $(s+d,d)$.

Since $H$ has multiplicity $m$ at the origin (with respect to the variables $\frac{x_1}{x_0},\ldots,\frac{x_n}{x_0}$),
$H$ is a sum of terms where each term consists of a monomial in $\frac{x_1}{x_0},\ldots,\frac{x_n}{x_0}$ of degree at least $m$ and at most $d$,
times a polynomial in the variables $\frac{a_1}{a_0},\ldots,\frac{a_n}{a_0}$ of degree at most $s+d$. Thus each term is of the form
$$c\big(\frac{a_1}{a_0},\ldots,\frac{a_n}{a_0}\big)\big(\frac{x_1}{x_0}\big)^{i_1}\cdots\big(\frac{x_n}{x_0}\big)^{i_n}$$
where $m\leq \sum_ji_j\leq d$ and $c$ has degree at most $s+d$. Thus multiplying by $a_0^{s+d}x_0^d$ clears the denominators.
Translating back we recover $D$; i.e., 
$$D\big(\frac{a_1}{a_0},\ldots,\frac{a_n}{a_0},\frac{x_1}{x_0},\ldots,\frac{x_n}{x_0}\big)=
H\big(\frac{a_1}{a_0},\ldots,\frac{a_n}{a_0},\frac{x_1}{x_0}-\frac{a_1}{a_0},\ldots,\frac{x_n}{x_0}-\frac{a_n}{a_0}\big),$$
but each term becomes
$$c\big(\frac{a_1}{a_0},\ldots,\frac{a_n}{a_0}\big)\big(\frac{x_1}{x_0}-\frac{a_1}{a_0}\big)^{i_1}\cdots\big(\frac{x_n}{x_0}-\frac{a_n}{a_0}\big)^{i_n}
=c\big(\frac{a_1}{a_0},\ldots,\frac{a_n}{a_0}\big)\big(\frac{a_0x_1-a_1x_0}{a_0x_0}\big)^{i_1}\cdots\big(\frac{a_0x_n-a_nx_0}{a_0x_0}\big)^{i_n}$$
and multiplying by $a_0^{2d+s}x_0^d$ we obtain
$$C(a_0,\ldots,a_n)(a_0x_1-a_1x_0)^{i_1}\cdots(a_0x_n-a_nx_0)^{i_n}$$
where $C$ is homogeneous in the variables $a_i$ of degree $s+d$, and 
$$(a_0x_1-a_1x_0)^{i_1}\cdots(a_0x_n-a_nx_0)^{i_n}\in J_{mP}.$$
Thus $a_0^{2d+s}x_0^dH\big(\frac{a_1}{a_0},\ldots,\frac{a_n}{a_0},\frac{x_1}{x_0}-\frac{a_1}{a_0},\ldots,\frac{x_n}{x_0}-\frac{a_n}{a_0}\big)=
a_0^{2d+s}x_0^dD=a_0^{2d}F^*\in J_{mP}$. But by \cite[Corollary 7.10]{BV}, since $J_P$ is the ideal generated by the 
maximal minors of a $2 \times (n+1)$ matrix of indeterminates, namely the two sets of variables, the powers $J_{mP}$ are primary for $J_P$.
Since no power of $a_0$ is in $J_{mP}$, we see that $F^*\in J_{mP}$.

(e) If $G\in J_{mP}$, then $G=\sum_s H_sM_s$ where each $M_s$ is a product of $m$ forms such as $a_ix_j-a_jx_i$ and each $H_s$
is a bi-homogeneous form of bi-degree $(t-m,d-m)$. So $G^\circ$ is obtained by dividing $G$ by $a_0^t$, hence we get 
$G^\circ=\sum_s H_s^\circ M_s^\circ$, and each $M_s^\circ$ is a product of $m$ forms such as $(a_ix_j-a_jx_i)/a_0$, each of
which is in $I_P$. Thus $G^\circ\in I_{mP}$.

\end{proof}

We used Macaulay2 \cite{M2} to find and verify the occurrence of unexpected hypersurfaces for various root systems.
It seems likely that the root system $B_{n+1}$, which gives a point set $Z_{B_{n+1}}$ in $\PP^n$, gives rise to an unexpected hypersurface
for each $n\geq 2$, but our verifications of unexpectedness are computational and so are limited to a few smaller values of $n$. 
We do not have a general proof. 

Given a finite set of points $Z\subset\PP^n$ we use the following script to check whether $Z$ has an
unexpected hypersurface of degree $d$ vanishing on $mP$ for a general point $P=[a_0:\cdots:a_n]$. 
Assuming that the computation of rank given by Macaulay2 is reliable,
running the script on an example gives a rigorous proof of whether the example is unexpected or not. 

The idea of the script is to construct the matrix $N$ expressing the conditions imposed on
all forms $F$ of degree $d$ by the points of $Z$, together with the conditions imposed for $F$ to vanish to order $m$ at a point
$P=[a_0:\ldots:a_n]$ with indeterminate coordinates $a_i$ represented in the scripts with new variables. 
Thus if we enumerate the monomials of degree $d$ in $n+1$ variables $x_i$
as $b_1,\ldots,b_{\binom{n+d}{n}}$, then $N$ will be a matrix with $|Z|+\binom{n+m-1}{n}$ rows and $\binom{n+d}{n}$ columns.
A form $F=\sum_ic_ib_i$ vanishes on $Z$ and on $mP$ if and only if $Nc=0$, where $c$ is the coefficient vector $c=(c_1,\ldots,c_{\binom{n+d}{n}})^T$.
We can regard $N$ as consisting of two matrices, $Q_1$ and $Q_2$, where $Q_1$ comprises the top $|Z|$ rows of $N$ and gives the conditions
imposed by the points of $Z$, and $Q_2$ comprises the bottom $\binom{n+m-1}{n}$ rows of $N$ giving the conditions imposed
by the fat point $mP$. Thus the entries of $Q_1$ are scalars in the ground field, but the entries of $Q_2$ are the order $m-1$ partials of
the monomials $b_i$ evaluated at $P$. 
Then $Z$ has unexpected hypersurfaces of degree $d$ with a general point $P$ of multiplicity $m$
exactly when 
$$\dim \ker N > \max(0,\binom{n+d}{n}-\rank(Q_1)-\rank(Q_2)),$$
and in this case the coefficient vectors $c$
of the unexpected hypersurfaces are precisely the nontrivial elements of $\ker N$.

The script which does this is given below. The section marked CODE BLOCK is where one puts in the list of the points
of $Z$ (or where one puts in code needed to generate the list; see the examples).
For now we exhibit code which works when the points of $Z$ are defined over the rationals.
Subtleties arise when coordinates in an extension field are needed. We discuss that later.

Apart from output indicating current status of the computation,
the script output indicates exactly when it finds an unexpected hypersurface.
An output of the form $(n,d,m,edim,adim)$ means that there is
a hypersurface for $Z$ in $\PP^n$ of degree $d$ with a generic point
of multiplicity $m$ (that is a point whose coordinates are variables); the vector space of unexpected forms has expected dimension edim
and actual dimension adim (where edim can be negative if the number of conditions
imposed by $mp$ is greater than the dimension of $[I_Z]_d$).
If for a given $n,d,m$ there is no output, then $Z$ has no unexpected hypersurface
in $\PP^n$ of degree $d$ with a general point $P$ of multiplicity $m$.
We will refer to the script below as the {\it universal script}.

\begin{verbatim}
for n from 2 to 6 do { -- Loop over dim 2 to 6
R=QQ[x_0..x_n];      -- Define coordinate ring
S=frac(QQ[a_0..a_n]); --Define ring for generic point 
CODE BLOCK for Pts={...}; -- Insert the list of the points of Z here
print {"n=",n,"#Pts=",#Pts};  -- Print completion status indicator
for d from 2 to 6 do {   -- Loop over degree
Md=flatten entries(basis(d,R));  -- Create deg d monomial basis
for m from 2 to d do {   -- Loop over multiplicity m
print {"d=",d,"m=",m};    -- Print completion status indicator
Mm=flatten entries(basis(m-1,R));  -- Create deg m-1 monomial basis
A={};   -- A will contain the list of rows for transpose of matrix Q1
apply(Md,i->(N={};for s from 0 to #Pts-1 do N=N|{sub(i,matrix{Pts_s})};A=A|{N}));
D={};  -- D will contain the list of rows for transpose of matrix Q2
apply(Md,i->(N={};for s from 0 to #Mm-1 do N=N|{diff(Mm_s,i)};D=D|{N}));
Q1=transpose matrix A;  -- Q1 is defined over R
Q2=transpose matrix D;  -- Q2 is defined over R
M={};
for i from 0 to n do M=M|{a_i};  -- M is the coord vector for generic point
Q1S=sub(Q1,S); -- Q1 is now defined to be over S
Q2S=sub(Q2,matrix{M}); -- Swap x variables for a variables
N=Q1S||Q2S;  
expdim=#Md - (rank Q1S) - (rank Q2S);
actdim=#Md-(rank N);
if actdim > expdim and actdim > 0 then 
     print {"n=",n,"d=",d,"m=",m,"edim=",expdim,"adim=",actdim}}}}
\end{verbatim}

One can recover the actual unexpected forms by putting in a line to print out the kernel of $N$.
When the actual unexpected forms themselves are not needed,
the script can be made more efficient. Note that the line \verb!Q2S=sub(Q2T,matrix{M})!
merely substitutes the variables $a_i$ in for the variables $x_i$ in the $(m-1)$ order partials.
This doesn't affect the rank. Also, the rank of the matrix over $S$ after this substitution
is the same as the rank of the matrix over $R$ before the substitution.
Thus if existence of and numerical data for unexpectedness is all that is needed,
then the line \verb!S=frac(QQ[a_0..a_n]);! can be deleted and the lines
\begin{verbatim}
M={};
for i from 0 to n do M=M|{a_i};  -- M is the coord vector for generic point
Q1S=sub(Q1,S); -- Q1 is now defined to be over S
Q2S=sub(Q2,matrix{M}); -- Swap x variables for a variables
N=Q1S||Q2S;  
expdim=#Md - (rank Q1S) - (rank Q2S);
actdim=#Md-(rank N);
if actdim > expdim and actdim > 0 then 
     print {"n=",n,"d=",d,"m=",m,"edim=",expdim,"adim=",actdim}}}}
\end{verbatim}
can be changed to
\begin{verbatim}
N=Q1||Q2;
expdim=#Md - (rank Q1) - (rank Q2);
actdim=#Md-(rank N);
if actdim > expdim and actdim > 0 then 
     print {"n=",n,"d=",d,"m=",m,"edim=",expdim,"adim=",actdim}}}}
\end{verbatim}
It's possible the script would run faster by evaluating the matrix $Q_2$ of partials at a random point
(with coordinates in the rationals or even in the integers)
rather than at a generic point. To do so, replace the line\\
\verb!apply(Md,i->(N={};for s from 0 to #Mm-1 do N=N|{diff(Mm_s,i)};D=D|{N}));!\\
with
\begin{verbatim}
G={};
v={};
F=random(1,R);
for i from 0 to n do (v=v|{diff(x_i,F)});
G=G|{v};
apply(Md,i->(N={};for s from 0 to #Mm-1 do 
     N=N|{sub(diff(Mm_s,i),matrix G)};D=D|{N}));
\end{verbatim}
By semicontinuity, if the script indicates that there is no
unexpected hypersurface for a given $Z$, $n$, $d$ and $m$ (by not outputting anything),
then there is indeed no such unexpected hypersurface. But output claiming
an unexpected hypersurface can't be relied on since the random point might
have been unlucky.

We now check the usual root systems for occurrence of unexpected hypersurfaces.

\subsection{The root system \texorpdfstring{$A_{n+1}$}{A(n+1)}}
The roots for $A_{n+1}$ are the $(n+1)(n+2)$ integer vectors in $\RR^{n+2}$ having one entry of 1, one of $-1$ and the rest 0.
We project these into $\RR^{n+1}$ by dropping the last coordinate. Projectivizing then gives a set $Z\subset\PP^n$
of $\binom{n+1}{2}$ points. No unexpected hypersurfaces turned up for 
$2\leq n\leq6$, $2\leq d\leq6$, $2\leq m\leq d$.

\subsection{The root system \texorpdfstring{$B_{n+1}$}{Bn}}
The root system $B_{n+1}\subset\RR^{n+1}$ consists of the $2(n+1)^2$ integer vectors $(a_1,\ldots,a_{n+1})$ 
such that $1\leq a_1^2+\cdots+a_{n+1}^2\leq 2$.
We thus have $|Z_{B_{n+1}}|=(n+1)^2$ for the corresponding set of points $Z_{B_{n+1}}\subset\PP^n$.
The CODE BLOCK here is:
\begin{verbatim}
H={};
W1=subsets(n+1,1);
apply(W1,s->(H=H|{x_(s_0)}));
W2=subsets(n+1,2);
apply(W2,s->(H=H|{x_(s_0)+x_(s_1),x_(s_0)-x_(s_1)}));
Pts={};
for j from 0 to #H-1 do (v={};for i from 0 to n do 
     (v=v|{diff(x_i,H_j)});Pts=Pts|{v});
\end{verbatim}
Checking $2\leq n\leq6$, $2\leq d\leq6$, $2\leq m\leq d$ turned up
seven cases of unexpected hypersurfaces, listed in Table~\ref{table: unexpected hypersurfaces}.

\begin{table}
\begin{tabular}{rrrrr}
$n$ & $d$ & $m$ & edim & adim \\
\hline
$2$ & $4$ & $3$ & $0$ & $1$ \\
$3$ & $4$ & $4$ & $-1$ & $1$ \\
$4$ & $4$ & $4$ & $10$ & $11$ \\
$5$ & $3$ & $3$ & $-1$ & $5$ \\
$5$ & $4$ & $4$ & $34$ & $35$ \\
$6$ & $3$ & $3$ & $7$ & $14$ \\
$6$ & $4$ & $4$ & $77$ & $78$ \\
\end{tabular}
\caption{Unexpected hypersurfaces arising from the root system $B_{n+1}$.}\label{table: unexpected hypersurfaces}
\end{table}

The case $(2, 4, 3, 0, 1)$ comes from the arrangement $B_3$ shown in Figure \ref{B3Fig}.
Its unique unexpected curve was shown to be unexpected by other methods in \cite{CHMN}.

In the case of $B_4$ we get a previously unknown unique unexpected hypersurface. It has degree 4 with a general point $[a_0:a_1:a_2:a_3]$ of multiplicity 4.
Thus it is a cone at the point $[a_0:a_1:a_2:a_3]$. In this case $Z_{B_4}$ is the set of 16 points coming from the roots, but the vanishing locus of $[I_{Z_{B_4}}]_4$
is 0-dimensional, in contrast with the examples of \S \ref{cone section}. In fact, as the general point of multiplicity 4 moves around, the only points that lie on every
one of the degree 4 unexpected surfaces are the 16 points of $Z_{B_4}$ together with the eight points
$[1:1:1:1]$,
$[-1:1:1:1]$,
$[1:-1:1:1]$,
$[1:1:-1:1]$,
$[1:1:1:-1]$,
$[-1:-1:1:1]$,
$[-1:1:-1:1]$,
$[1:-1:-1:1]$.
(To compute this locus of 24 points, look at the ideal of the coefficients of the monomials of the unexpected form
$F(a,x)$ in $a=[a_0:a_1:a_2:a_3]$; i.e., if $F(a,x)$ is the unexpected surface,
write it as a polynomial in $a_i$ with coefficients which are polynomials in $x_j$. Take the ideal generated by these 
coefficient polynomials in $x_j$. They define the locus of points at which all of the unexpected surfaces vanish as the point
$a$ moves around.)

\subsection{The root system \texorpdfstring{$C_{n+1}$}{C(n+1)}}
Since $Z_{C_{n+1}}=Z_{B_{n+1}}$, this case is covered by $B_{n+1}$.

\subsection{The root system \texorpdfstring{$D_{n+1}$}{D(n+1)}}
The root system $D_{n+1}$ consists of the $2(n+1)n$ integer vectors $(a_1,\ldots,a_{n+1})\in\RR^{n+1}$ with $a_1^2+\cdots+a_{n+1}^2=2$.
The CODE BLOCK here is:
\begin{verbatim}
H={};
W2=subsets(n+1,2);
apply(W2,s->(H=H|{x_(s_0)+x_(s_1),x_(s_0)-x_(s_1)}));
Pts={};
for j from 0 to #H-1 do 
    (v={}; for i from 0 to n do (v=v|{diff(x_i,H_j)});Pts=Pts|{v});
\end{verbatim}
We checked $2\leq n\leq6$, $2\leq d\leq 6$ and $2\leq m\leq d$. The cases $({n, d, m, edim, adim})$ 
that turned up were $({3, 3, 3, -2, 1}), ({3, 4, 4, 3, 4})$.

\subsection{The root systems \texorpdfstring{$E_{n+1}$}{E(n+1)}, for \texorpdfstring{$n=5,6,7$}{n=5,6,7}}
The root system $E_8$ is a set of 240 vectors consisting of all integer vectors $(a_1,\ldots,a_8)\in\RR^8$ with $a_1^2+\cdots+a_8^2=2$,
(i.e., $D_8$) together with all vectors of the form $(1/2)(a_1,\ldots,a_8)$ where each entry $a_i$ is $\pm1$ and $a_1\cdots a_8=1$;
thus $|Z_{E_8}|=120$.
The CODE BLOCK here is:
{\tiny
\begin{verbatim}
Pts={{1, 1, 0, 0, 0, 0, 0, 0}, {1, -1, 0, 0, 0, 0, 0, 0}, {1, 0, 1, 0, 0, 0, 0, 0}, 
{1, 0, -1, 0, 0, 0, 0, 0}, {0, 1, 1, 0, 0, 0, 0, 0}, {0, 1, -1, 0, 0, 0, 0, 0}, 
{1, 0, 0, 1, 0, 0, 0, 0}, {1, 0, 0, -1, 0, 0, 0, 0}, {0, 1, 0, 1, 0, 0, 0, 0}, 
{0, 1, 0, -1, 0, 0, 0, 0}, {0, 0, 1, 1, 0, 0, 0, 0}, {0, 0, 1, -1, 0, 0, 0, 0}, 
{1, 0, 0, 0, 1, 0, 0, 0}, {1, 0, 0, 0, -1, 0, 0, 0}, {0, 1, 0, 0, 1, 0, 0, 0}, 
{0, 1, 0, 0, -1, 0, 0, 0}, {0, 0, 1, 0, 1, 0, 0, 0}, {0, 0, 1, 0, -1, 0, 0, 0}, 
{0, 0, 0, 1, 1, 0, 0, 0}, {0, 0, 0, 1, -1, 0, 0, 0}, {1, 0, 0, 0, 0, 1, 0, 0}, 
{1, 0, 0, 0, 0, -1, 0, 0}, {0, 1, 0, 0, 0, 1, 0, 0}, {0, 1, 0, 0, 0, -1, 0, 0}, 
{0, 0, 1, 0, 0, 1, 0, 0}, {0, 0, 1, 0, 0, -1, 0, 0}, {0, 0, 0, 1, 0, 1, 0, 0}, 
{0, 0, 0, 1, 0, -1, 0, 0}, {0, 0, 0, 0, 1, 1, 0, 0}, {0, 0, 0, 0, 1, -1, 0, 0}, 
{1, 0, 0, 0, 0, 0, 1, 0}, {1, 0, 0, 0, 0, 0, -1, 0}, {0, 1, 0, 0, 0, 0, 1, 0}, 
{0, 1, 0, 0, 0, 0, -1, 0}, {0, 0, 1, 0, 0, 0, 1, 0}, {0, 0, 1, 0, 0, 0, -1, 0}, 
{0, 0, 0, 1, 0, 0, 1, 0}, {0, 0, 0, 1, 0, 0, -1, 0}, {0, 0, 0, 0, 1, 0, 1, 0}, 
{0, 0, 0, 0, 1, 0, -1, 0}, {0, 0, 0, 0, 0, 1, 1, 0}, {0, 0, 0, 0, 0, 1, -1, 0}, 
{1, 0, 0, 0, 0, 0, 0, 1}, {1, 0, 0, 0, 0, 0, 0, -1}, {0, 1, 0, 0, 0, 0, 0, 1}, 
{0, 1, 0, 0, 0, 0, 0, -1}, {0, 0, 1, 0, 0, 0, 0, 1}, {0, 0, 1, 0, 0, 0, 0, -1}, 
{0, 0, 0, 1, 0, 0, 0, 1}, {0, 0, 0, 1, 0, 0, 0, -1}, {0, 0, 0, 0, 1, 0, 0, 1}, 
{0, 0, 0, 0, 1, 0, 0, -1}, {0, 0, 0, 0, 0, 1, 0, 1}, {0, 0, 0, 0, 0, 1, 0, -1}, 
{0, 0, 0, 0, 0, 0, 1, 1}, {0, 0, 0, 0, 0, 0, 1, -1}, {1, 1, 1, 1, 1, 1, 1, 1}, 
{1, 1, 1, 1, 1, 1, -1, -1}, {1, 1, 1, 1, 1, -1, 1, -1}, {1, 1, 1, 1, 1, -1, -1, 1}, 
{1, 1, 1, 1, -1, 1, 1, -1}, {1, 1, 1, 1, -1, 1, -1, 1}, {1, 1, 1, 1, -1, -1, 1, 1}, 
{1, 1, 1, 1, -1, -1, -1, -1}, {1, 1, 1, -1, 1, 1, 1, -1}, {1, 1, 1, -1, 1, 1, -1, 1}, 
{1, 1, 1, -1, 1, -1, 1, 1}, {1, 1, 1, -1, 1, -1, -1, -1}, {1, 1, 1, -1, -1, 1, 1, 1}, 
{1, 1, 1, -1, -1, 1, -1, -1}, {1, 1, 1, -1, -1, -1, 1, -1}, {1, 1, 1, -1, -1, -1, -1, 1}, 
{1, 1, -1, 1, 1, 1, 1, -1}, {1, 1, -1, 1, 1, 1, -1, 1}, {1, 1, -1, 1, 1, -1, 1, 1}, 
{1, 1, -1, 1, 1, -1, -1, -1}, {1, 1, -1, 1, -1, 1, 1, 1}, {1, 1, -1, 1, -1, 1, -1, -1}, 
{1, 1, -1, 1, -1, -1, 1, -1}, {1, 1, -1, 1, -1, -1, -1, 1}, {1, 1, -1, -1, 1, 1, 1, 1}, 
{1, 1, -1, -1, 1, 1, -1, -1}, {1, 1, -1, -1, 1, -1, 1, -1}, {1, 1, -1, -1, 1, -1, -1, 1}, 
{1, 1, -1, -1, -1, 1, 1, -1}, {1, 1, -1, -1, -1, 1, -1, 1}, {1, 1, -1, -1, -1, -1, 1, 1}, 
{1, 1, -1, -1, -1, -1, -1, -1}, {1, -1, 1, 1, 1, 1, 1, -1}, {1, -1, 1, 1, 1, 1, -1, 1}, 
{1, -1, 1, 1, 1, -1, 1, 1}, {1, -1, 1, 1, 1, -1, -1, -1}, {1, -1, 1, 1, -1, 1, 1, 1}, 
{1, -1, 1, 1, -1, 1, -1, -1}, {1, -1, 1, 1, -1, -1, 1, -1}, {1, -1, 1, 1, -1, -1, -1, 1}, 
{1, -1, 1, -1, 1, 1, 1, 1}, {1, -1, 1, -1, 1, 1, -1, -1}, {1, -1, 1, -1, 1, -1, 1, -1}, 
{1, -1, 1, -1, 1, -1, -1, 1}, {1, -1, 1, -1, -1, 1, 1, -1}, {1, -1, 1, -1, -1, 1, -1, 1}, 
{1, -1, 1, -1, -1, -1, 1, 1}, {1, -1, 1, -1, -1, -1, -1, -1}, {1, -1, -1, 1, 1, 1, 1, 1}, 
{1, -1, -1, 1, 1, 1, -1, -1}, {1, -1, -1, 1, 1, -1, 1, -1}, {1, -1, -1, 1, 1, -1, -1, 1}, 
{1, -1, -1, 1, -1, 1, 1, -1}, {1, -1, -1, 1, -1, 1, -1, 1}, {1, -1, -1, 1, -1, -1, 1, 1}, 
{1, -1, -1, 1, -1, -1, -1, -1}, {1, -1, -1, -1, 1, 1, 1, -1}, {1, -1, -1, -1, 1, 1, -1, 1}, 
{1, -1, -1, -1, 1, -1, 1, 1}, {1, -1, -1, -1, 1, -1, -1, -1}, {1, -1, -1, -1, -1, 1, 1, 1}, 
{1, -1, -1, -1, -1, 1, -1, -1}, {1, -1, -1, -1, -1, -1, 1, -1}, {1, -1, -1, -1, -1, -1, -1, 1}};
\end{verbatim}
}

For $E_8$ we have $n=7$. 
We checked $2\leq d\leq 6$ and $2\leq m\leq d$. The cases $({n, d, m, edim, adim})$ 
that arose were
$({7, 4, 3, 174, 175})$,
$({7, 4, 4, 90, 99})$, and
$({7, 5, 5, 342, 343})$.

The root system $E_7$ is the set of 126 elements of $E_8$ such that the first two coordinates are equal.
Thus the CODE BLOCK for $E_7$ is obtained from that for $E_8$ by filtering out the cases where
the first two coordinates are equal and then dropping the first coordinate to get a 7 element vector.
We checked $2\leq n\leq6$, $2\leq d\leq 6$ and $2\leq m\leq d$. The only case $(n, d, m, edim, adim)$ 
that arose was $({6, 4, 4, 63, 64})$.

The root system $E_6$ is the set of 72 elements of $E_8$ such that the first three coordinates are equal.
The CODE BLOCK in this case is obtained from that for $E_7$ by filtering out the cases where
the first two coordinates are equal and then dropping the first coordinate to get a 6 element vector.
We checked $2\leq n\leq6$, $2\leq d\leq 6$ and $2\leq m\leq d$ but did not find any 
unexpected hypersurfaces.

\subsection{The root system \texorpdfstring{$F_4$}{F4}}
The root system $F_4$ is the set of 48 vectors consisting of all vectors $(a_1,\ldots,a_4)\in\RR^4$ 
such that each nonzero entry is $\pm1$ and $1\leq a_1^2+\cdots+a_4^2\leq2$,
or each entry is $\pm1/2$.
So here $n=3$, $|Z_{F_4}|=24$ and the CODE BLOCK is
\begin{verbatim}
Pts={{1,1,0,0}, {1,-1,0,0}, {1,0,1,0}, {1,0,-1,0}, {1,0,0,1}, {1,0,0,-1}, 
{0,1,1,0}, {0,1,-1,0}, {0,1,0,1}, {0,1,0,-1}, {0,0,1,1}, {0,0,1,-1}, 
{1,0,0,0}, {0,1,0,0}, {0,0,1,0}, {0,0,0,1}, {1,1,1,1}, {1,1,-1,1}, 
{1,1,1,-1}, {1,1,-1,-1}, {1,-1,1,1}, {1,-1,-1,1}, {1,-1,1,-1}, {1,-1,-1,-1}};
\end{verbatim}
We checked $2\leq n\leq6$, $2\leq d\leq 10$ and $2\leq m\leq d$. The cases $({n, d, m, edim, adim})$ 
that arose were
$({3, 4, 3, 2, 4}),
({3, 4, 4, -8, 1}),
({3, 5, 5, -3, 3}),
({3, 6, 6, 4, 7}),
({3, 7, 7, 12, 13})$.

\subsection{The root system \texorpdfstring{$H_n,n=3,4$}{Hn, n=3,4}}
The root systems $H_3$ and $H_4$ are non-crystallographic.
Figure \ref{H3Fig} shows the 15 lines dual to the roots of the $H_3$ root system.
We note that the $H_3$ line arrangement is simplicial but not supersolvable.
Because the lines are not defined over $\QQ$,
for $H_3$ we must in the universal script replace
\begin{verbatim}
for n from 2 to 2 do {
R=QQ[x_0..x_n]; 
S=frac(QQ[a_0..a_n]); 
\end{verbatim}
by
\begin{verbatim}
for n from 2 to 2 do {
K=toField(QQ[t]/(t^2-5));
R=K[x_0..x_n];
S=frac(QQ[a_0..a_n]); 
\end{verbatim}
\medskip
The CODE BLOCK is now
\begin{verbatim}
Pts={{0,0,1}, {1,0,1}, {1,0,-1}, {0,1,1},
{0,1,-1}, {0,1,2 + t}, {0,1,-2 - t}, {1,0,-2 - t},
{1,0,2 + t}, {1,-1,0}, {1,1,0},
{t+3,-(2*t+4),3*t+7}, {2*t+4,-(t+3),-(3*t+7)},
{2*t+4,-(t+3),3*t+7}, {t+3,-(2*t+4),-(3*t+7)}};
\end{verbatim}
The unexpected curves that turned up for $n=2$, $2\leq d\leq 8$ and $2\leq m\leq d$ were
$({2, 6, 5, -2, 1}),
({2, 7, 6, 0, 2}),
({2, 8, 7, 2, 3})$.

\begin{figure}
\begin{center}
\definecolor{cqcqcq}{rgb}{0.7529411764705882,0.7529411764705882,0.7529411764705882}
\begin{tikzpicture}[line cap=round,line join=round,>=triangle 45,x=.35cm,y=.35cm]
\clip(-9.004453999999999,-6.906597999999999) rectangle (7.979106000000004,6.82666);
\draw [line width=1.pt] (-1.,-6.906597999999999) -- (-1.,6.82666);
\draw [line width=1.pt] (1.,-6.906597999999999) -- (1.,6.82666);
\draw [line width=1.pt,domain=-9.004453999999999:7.979106000000004] plot(\x,{(-1.-0.*\x)/1.});
\draw [line width=1.pt,domain=-9.004453999999999:7.979106000000004] plot(\x,{(--1.-0.*\x)/1.});
\draw [line width=1.pt,domain=-9.004453999999999:7.979106000000004] plot(\x,{(-4.23606797749979-0.*\x)/1.});
\draw [line width=1.pt,domain=-9.004453999999999:7.979106000000004] plot(\x,{(--4.23606797749979-0.*\x)/1.});
\draw [line width=1.pt] (4.23606797749979,-6.906597999999999) -- (4.23606797749979,6.82666);
\draw [line width=1.pt] (-4.23606797749979,-6.906597999999999) -- (-4.23606797749979,6.82666);
\draw [line width=1.pt,domain=-9.004453999999999:7.979106000000004] plot(\x,{(-0.--1.*\x)/1.});
\draw [line width=1.pt,domain=-9.004453999999999:7.979106000000004] plot(\x,{(-0.-1.*\x)/1.});
\draw [line width=1.pt,domain=-9.004453999999999:7.979106000000004] plot(\x,{(-13.70820393249937-5.23606797749979*\x)/-8.47213595499958});
\draw [line width=1.pt,domain=-9.004453999999999:7.979106000000004] plot(\x,{(--13.70820393249937-8.47213595499958*\x)/-5.23606797749979});
\draw [line width=1.pt,domain=-9.004453999999999:7.979106000000004] plot(\x,{(-13.70820393249937-8.47213595499958*\x)/-5.23606797749979});
\draw [line width=1.pt,domain=-9.004453999999999:7.979106000000004] plot(\x,{(--13.70820393249937-5.23606797749979*\x)/-8.47213595499958});
\draw [fill=white] (0.,0.) circle (3.0pt);
\draw [fill=white] (4.23606797749979,4.23606797749979) circle (3.0pt);
\draw [fill=white] (1.,1.) circle (3.0pt);
\end{tikzpicture}
\caption{The $H_3$ configuration of 15 lines (the line at infinity is not shown; the affine coordinates of the points shown 
as open dots are $(0,0), (1,1)$ and $(2+\sqrt{5},2+\sqrt{5})$).}
\label{H3Fig}
\end{center}
\end{figure}
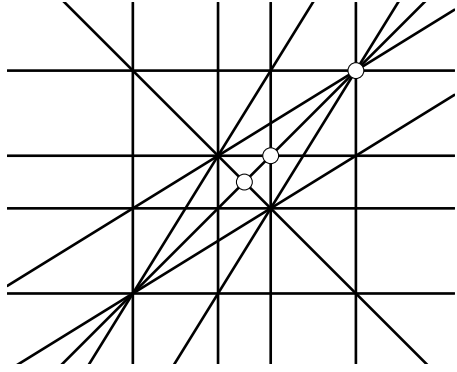

The $H_4$ root system has 120 elements defined over ${\QQ}[t]/(t^2-t-1)$
(see \cite{BFGMV,St}). Thus $|Z_{H_4}|=60$.
For $H_4$ one must use
\begin{verbatim}
for n from 3 to 3 do {
K=toField(QQ[t]/(t^2-t-1));
R=K[x_0..x_n];
\end{verbatim}
The CODE BLOCK is
{\tiny
\begin{verbatim}
Pts={{1,0,0,0}, {0,1,0,0}, {0,0,1,0}, {0,0,0,1}, {1,1,1,1}, {1,1,1,-1},
{1,1,-1,1}, {1,1,-1,-1}, {1,-1,1,1}, {1,-1,1,-1}, {1,-1,-1,1}, {1,-1,-1,-1},
{0,t,t^2,1}, {0,t,t^2,-1}, {0,t,-t^2,1}, {0,t,-t^2,-1}, {0,t^2,1,t}, {0,t^2,1,-t},
{0,t^2,-1,t}, {0,t^2,-1,-t}, {0,1,t,t^2}, {0,1,t,-t^2}, {0,1,-t,t^2}, {0,1,-t,-t^2},
{t,0,1,t^2}, {t,0,1,-t^2}, {t,0,-1,t^2}, {t,0,-1,-t^2}, {t^2,0,t,1}, {t^2,0,t,-1},
{t^2,0,-t,1}, {t^2,0,-t,-1}, {1,0,t^2,t}, {1,0,t^2,-t}, {1,0,-t^2,t}, {1,0,-t^2,-t},
{t,t^2,0,1}, {t,t^2,0,-1}, {t,-t^2,0,1}, {t,-t^2,0,-1}, {t^2,1,0,t}, {t^2,1,0,-t},
{t^2,-1,0,t}, {t^2,-1,0,-t}, {1,t,0,t^2}, {1,t,0,-t^2}, {1,-t,0,t^2}, {1,-t,0,-t^2},
{t,1,t^2,0}, {t,1,-t^2,0}, {t,-1,t^2,0}, {t,-1,-t^2,0}, {t^2,t,1,0}, {t^2,t,-1,0},
{t^2,-t,1,0}, {t^2,-t,-1,0}, {1,t^2,t,0}, {1,t^2,-t,0}, {1,-t^2,t,0}, {1,-t^2,-t,0}};
\end{verbatim}
}
The cases of unexpected surfaces that turned up for $n=3$, $2\leq d\leq 6$ and $2\leq m\leq d$ were
$({3, 6, 3, 14, 15})$,
$({3, 6, 4, 4, 9})$,
$({3, 6, 5, -11, 4})$,
$({3, 6, 6, -32, 1})$.


\section{BMSS Duality}\label{BMSSduality}

A very interesting observation was made in \cite{BMSS}. In the case of the unexpected quartic coming from the $B_3$ line arrangement,
\cite{BMSS} observed that $F^*(a,x)$ has bi-degree $(3,4)$ and for each $x$ it defines three lines 
in the $a$ variables, and moreover that these three lines meet at the point $x$.
In fact, given a form 
$$F\in S=\KK\big(\frac{a_1}{a_0},\ldots,\frac{a_n}{a_0}\big)[x_0,\ldots,x_n]$$
defining an unexpected variety of degree $d$ with a general point $P$ of multiplicity $m$,
one has by Lemma \ref{StEx}(d) that the bi-homogeneous form $F^*(a,x)\in R=\KK[a_0,\ldots,a_n][x_0,\ldots,x_n]$ has bi-degree $(t,d)$
for some $t\geq m$. Thus one can regard $F^*(a,x)$ as defining a family of hypersurfaces in the $x$ variables, parameterized by $a$
(these are the unexpected hypersurfaces), but one can also regard 
$F^*(a,x)$ as defining a more mysterious family of hypersurfaces in the $a$ variables, parameterized by $x$.

In the case of the $B_3$ unexpected quartic $F^*(a,x)$ with a general triple point, it follows from Lemma \ref{StEx}(d) that
$F^*$ has multiplicity at least 3 in the $a$ variables at the point $a=x$ and hence has bi-degree $(s,4)$ with $s\geq 3$.
We see below why in fact $s=3$; thus, given the point of multiplicity at least 3 at $a=x$,
it must have multiplicity exactly 3, and for a general choice of $x$, $F^*(a,x)$ splits as a product of three 
forms linear in the $a$ variables, meeting at $a=x$.

In this section we will show that this phenomenon occurs in a range of cases, 
and that for these cases $F^*(x,a)$ defines the lines tangent to the branches of the curve $F^*(a,x)$
(see Figure \ref{TCFig}). We will also study a similar duality for the hypersurfaces defined by our cone construction
from \S\ref{cone section}.

Let $W\subset \Ab^n$ be a hypersurface in affine space defined by a reduced polynomial $F$.
We first recall what the tangent cone is for $W$ at a point $P\in W$.
Write $F$ as a sum $F=F_0+F_1+\cdots$ of polynomials $F_i \in I(P)^i$ where each 
$F_i$ is homogeneous of degree $i$ in coordinates centered at the point $P$.
Let $j$ be the least index such that $F_j\neq0$. Then $F_j=0$ defines the tangent cone
to $W$ at $P$. In characteristic 0, $F=F_0+F_1+\cdots$ is just a Taylor expansion of $F$ at $P$; the tangent 
cone is the term $F_j$ obtained by differentiating to order $j$ where $j$ is the multiplicity of $W$ at $P$.

One can also work projectively. Let $W\subset \PP^n$ now be a hypersurface in projective space defined by a reduced form $H$
of degree $d$. For simplicity, assume the characteristic of $\KK$ is 0.
Given two polynomials $F$ and $G$, let $F\cdot G$ denote the action of differentiation, so 
$$(2x_0^2x_1+x_3)\cdot (x_0^3x_1^2x_3)=(2x_0^2x_1)\cdot (x_0^3x_1^2x_3)+x_3\cdot (x_0^3x_1^2x_3)=24x_0x_1x_3+x_0^3x_1^2.$$

To compute the tangent cone of $W$ at a point $P$ of multiplicity $m$,
let $\mu_j$ be an enumeration of the monomials of degree $m$. Let $c_j=\mu_j\cdot\mu_j$
be the factorial expression obtained by differentiating $\mu_j$ against itself (this is needed for the Taylor expansion).
Then the tangent cone of $W$ at $P$ is defined by the degree $m$ form
\begin{equation} \label{def of HP}
H_P=\sum_j \frac{((\mu_j\cdot H)(P))\mu_j}{c_j}.
\end{equation}

\begin{example}
As an example we consider the tangent cones for the cone construction of \S\ref{cone section}. 
Assume $Z$ is a finite set of points in $\PP^n_\KK$ and that the space $V\subset [S]_d$ of forms vanishing on the points
has the property that there is (up to multiplication by a scalar)
a unique form $F\in V$ with a point of multiplicity $d$ at a general point $P=[a_0:\ldots:a_n]\in\PP^n$. 
Applying an idea similar to that of Remark \ref{need fewer pts}, remove points if necessary so that we are 
left with a subset $\{P_1,\ldots,P_r\}$ of $Z$ of $r=\binom{d+n}{n} - \binom{d-1+n}{n} -1$ points. (Now $F$ is not unexpected.) 
Then $F^*\in R=\KK[a_0,\ldots,a_n][x_0,\ldots,x_n]$ and by Lemma \ref{StEx}(d) $F^*$ is bi-homogeneous
of bi-degree $(\delta,d)$ with $\delta\geq d$.

Let $M_j$ be an enumeration of the monomials in $T=\KK[x_0,\ldots,x_n]$ of degree $d$. Let $m_i$ be an enumeration
of the monomials in $T$ of degree $d-1$. Let $t=\binom{n+d}{n}$ and $s=\binom{n+d-1}{n}$.
Let $\Gamma$ be the matrix whose top $r$ rows are the values $M_j(P_i)$ of the monomials $M_j$ at the points $P_i$,
and whose next (and bottom) $s$ rows are the values $(m_i\cdot M_j)(P)$, 
where as above the dot indicates the action of $T$ on $T$ by partial differentiation.
Note that the entries of $\Gamma$ are all in $\KK[a_0,\ldots,a_n]$.
Elements in the kernel of $\Gamma$ are coefficient vectors for forms vanishing at the points $P_i$ and having a point of multiplicity $d$ at $P = a$.
The assumption that there are $r=t-s-1$ points $P_i$ and that there is (up to multiplication by scalars) a unique form vanishing on the points
with a point of multiplicity $m$ at $P$ means 
that $\Gamma$ is a $(t-1)\times t$ matrix whose rank at a general point $P$ is $t-1$. 
Since the entries of $\Gamma$ are monomials of degree 0 or 1, we can divide the $s$ rows
having degree 1 monomials by $a_0$ and obtain a row equivalent matrix $\Gamma'$
with entries in the field $\FF=\KK\big(\frac{a_1}{a_0},\ldots,\frac{a_n}{a_0}\big)$. The kernel of $\Gamma'$ has dimension 1, and
for any nonzero vector $v=(v_1,\ldots,v_t)$
in the kernel, we can take $F$ to be $F=\sum_iv_iM_i$. 
Note that $(F^*)^\circ$ (and hence $F^*$) also are in the kernel, and so can be used in place of $F$
and thus all have the same tangent cone for a general point $P=a$. 
Each entry of $v$ is in $\FF$ so computing the tangent cone gives
$$\sum_j \frac{((M_j\cdot F)(P))M_j}{c_j}=\sum_{i,j} \frac{((M_j\cdot v_iM_i)(P))M_j}{c_j}
=\sum_i v_i\frac{((M_i\cdot M_i)(P))M_i}{c_i}=\sum_i v_iM_i=F.$$
I.e., $F$ is its own tangent cone, which for a general point $P=a$ is thus defined by $F(a,x)=0$
(or equivalently $F^*(a,x)=0$).

We can also work over $R$.
Let $\Gamma'^*$ be the matrix obtained by appending $(M_1,\ldots,M_t)$ as a row at the bottom of $\Gamma'$,
and let $\Gamma^*$ be the matrix obtained by appending $(M_1,\ldots,M_t)$ as a row at the bottom of $\Gamma$.
Then $G'=\det\Gamma'^*$ is a multiple of $F$ by a scalar in $\FF$ and we have $a_0^sG'=G=\det \Gamma^*\in R$.
Since $G'=G/a_0^s$ is a scalar multiple of $F$ by a scalar in $\FF$, it follows that $G^*=F^*$. 
Moreover, we have $G=C(a)G^*$
where $C(a)$ is a polynomial describing for which points $a$ the matrix $\Gamma$ has less than full rank.
(Suppose that we choose a point $a=Q_1$ such that there is a point $x=Q_2$ for which 
$G^*(Q_1,Q_2)\neq0$. Then $C(Q_1)=0$ if and only if $\det \Gamma^*=G(Q_1,Q_2)=0$,
which occurs if and only if the maximal minors of $\Gamma$ all vanish; i.e., $\Gamma$ has rank less than $t-1$.)
As before, $G$ is its own tangent cone, but it's still not clear what $F^*$ is or how $F^*(a,x)$ is related to $F^*(x,a)$. 

Now assume that there is an irreducible variety $W$ of degree $d$ and codimension 2 such that for a general point 
$P=a$ the locus $F(a,x)=0$ is precisely the union of all lines through $P$ and a point of $W$,
and that $F(a,x)$ is irreducible (and hence so is $F^*(a,x)$ by Lemma \ref{StEx}). Then we have $F^*(a,x)=\pm F^*(x,a)$. Here's why.
For a general point $P'=[a_0':\ldots:a_n']$ of $F(a,x)=0$ (and hence of $F^*(a,x)=0$), $P'$ is on the cone through $W$ having vertex $P$,
so $P'$ is on the line through $P$ and a point $w\in W$. But then $P$ is on the line through $P'$ and $w$, so
$P$ is on the cone $F(a',x)=0$ with vertex $P'$ (hence on $F^*(a',x)=0$), so $F^*(a',a)=0$. I.e., $F^*(a',a)=0$ if and only if $F^*(a,a')=0$.
Thus the loci $F^*(a,x)=0$ and $F^*(x,a)=0$ intersect in a nonempty open subset of $F(a,x)=0$. 
Since $F^*(a,x)$ and $F^*(x,a)$ are irreducible, we have $F^*(a,x)=cF^*(x,a)$ for some scalar $c$.
But swapping variables again gives $F^*(a,x)=cF^*(a,x)$ hence $F^*(a,x)=c^2F^*(a,x)$ so $c=\pm1$.
Thus in this case, without resorting to Lemma \ref{StEx}(d), we see $F^*(a,x)$ has bi-degree $(d,d)$
and that the tangent cone to $F^*(a,x)$ at a general point $P=a$ is defined by $F^*(x,a)$
(since here the tangent cone is defined by $F^*(a,x)$ but $F^*(a,x)$ and $F^*(x,a)$ define the same locus).
\end{example}

\begin{question} 
Is it always true for an unexpected hypersurface $F(a,x)$ of degree $d$ with a general point of multiplicity $d$ that $F^*(a,x)=\pm F^*(x,a)$?
\end{question} 

We now consider how $F^*(a,x)$ and $F^*(x,a)$ are related in the case of unexpected curves in the plane having
degree $m+1$ and a general point $P$ of multiplicity $m$. 

We begin with a lemma. Recall that $(x-x_1)(x-x_2)\cdots(x-x_n)=x^n+(-1)^1e_1x^{n-1}+(-1)^2e_2x^{n-2}+\cdots+(-1)^ne_nx^0$,
where $e_i=\sum_{1\leq j_1<j_2<\cdots<j_i\leq n}x_{j_1}\cdots x_{j_i}$ is the so-called $i$th elementary symmetric polynomial.
Let $p_i=x_1^i+\cdots+x_n^i$ be the $i$th symmetric power sum. Newton's identities (also known as the Newton-Girard formulas)
relate these as follows:
$$e_1=p_1,$$
$$2e_2=e_1p_1-p_2,$$
$$3e_3=e_2p_1-e_1p_2+p_3,$$
etc., and in general
$$ie_i=\sum_{j=1}^i(-1)^{j-1}e_{i-j}p_j.$$
Let $f:\KK^n\to \KK^n$ be the map $f(x_1,\ldots,x_n)=(e_1,\ldots,e_n)$ and let $df=(\partial e_i/\partial x_j)$ be the
matrix for the mapping on tangent spaces.

\begin{lemma}\label{diffMap} 
Given the mapping $f(x_1,\ldots,x_n)=(e_1,\ldots,e_n)$, then 
$$\det(df)(x_1,\ldots,x_n)=\pm \Pi_{i<j}(x_i-x_j)$$ 
so $\det(df)(x_1,\ldots,x_n)\neq0$ if and only if $x_i\neq x_j$ for all $i\neq j$.
\end{lemma}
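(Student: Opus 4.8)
The plan is to factor the map $f$ through the power-sum map and exploit the triangular relationship between power sums and elementary symmetric functions furnished by the Newton identities stated just above the lemma. Write $g \colon \KK^n \to \KK^n$ for the power-sum map $g(x_1,\ldots,x_n) = (p_1,\ldots,p_n)$ and $\phi \colon \KK^n \to \KK^n$ for the polynomial map sending $(e_1,\ldots,e_n)$ to $(p_1,\ldots,p_n)$. Since each $p_k$ is a polynomial in the $e_i$, we have the factorization $g = \phi \circ f$, so by the chain rule $\det(dg) = \det(d\phi)\cdot\det(df)$, and it suffices to compute the two outer Jacobians and divide.

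First I would compute $\det(dg)$ directly. Because $\partial p_k/\partial x_j = k\,x_j^{k-1}$, the matrix $dg$ has $(k,j)$ entry $k\,x_j^{k-1}$; pulling the scalar $k$ out of the $k$th row for $k = 1,\ldots,n$ leaves a Vandermonde matrix $(x_j^{k-1})$, whose determinant is $\prod_{i<j}(x_j - x_i)$. Hence $\det(dg) = n!\,\prod_{i<j}(x_j - x_i)$.

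Next I would show $\det(d\phi) = \pm n!$. The key point is that the general Newton identity $i\,e_i = \sum_{j=1}^{i}(-1)^{j-1}e_{i-j}p_j$ (with $e_0 = 1$) can be solved for $p_i$ to express each $p_i$ as a polynomial in $e_1,\ldots,e_i$ \emph{only}, with the coefficient of $e_i$ equal to $(-1)^{i-1}i$; this is an easy induction on $i$, since every other term is a product $e_{i-j}p_j$ with $1 \le j < i$, and by induction such a term involves only $e_1,\ldots,e_{i-1}$. Consequently $d\phi$ is lower triangular in the ordering $e_1,\ldots,e_n$, with diagonal entries $(-1)^{i-1}i$, giving $\det(d\phi) = \prod_{i=1}^{n}(-1)^{i-1}i = \pm\, n!$.

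Dividing, I obtain $\det(df) = \det(dg)/\det(d\phi) = \pm \prod_{i<j}(x_j - x_i) = \pm\prod_{i<j}(x_i - x_j)$, from which the nonvanishing criterion $\det(df)(x_1,\ldots,x_n) \neq 0 \iff x_i \neq x_j$ for all $i \neq j$ is immediate. The step to verify most carefully is the triangular structure of $\phi$ — namely that $p_i$ really involves no $e_\ell$ with $\ell > i$ and that the diagonal coefficient is exactly $(-1)^{i-1}i$ — since everything else reduces to a one-line Vandermonde computation; fortunately this is precisely what the recursive form of the Newton identities delivers. One subtlety worth noting is that the factorization $g = \phi \circ f$ and the resulting chain-rule identity hold identically as polynomials in the $x_i$, so the cancellation of the common factor $n!$ is valid everywhere, not merely at generic points.
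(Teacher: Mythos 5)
Your proof is correct, and it takes a genuinely different route from the paper's. The paper works directly on the matrix $df$: guided by Newton's identities, it performs determinant-preserving row operations (clearing the terms $\nabla(e_{i-j})p_j$ against higher rows, then iterating) until $df$ is row-equivalent to the matrix with rows $(-1)^{k-1}\frac{1}{k}\nabla(p_k)=(-1)^{k-1}(x_1^{k-1},\ldots,x_n^{k-1})$, i.e.\ a signed Vandermonde matrix. You instead factor the \emph{map}: writing $g=\phi\circ f$ with $g$ the power-sum map and $\phi$ the Newton change of variables, you get $\det(dg)(x)=\det(d\phi)(f(x))\cdot\det(df)(x)$ by the chain rule, and then compute the two outer determinants separately — $\det(dg)=n!\prod_{i<j}(x_j-x_i)$ by pulling $k$ out of each row of a Vandermonde-type matrix, and $\det(d\phi)=\pm n!$ from the triangularity that your induction on the recursion $p_i=(-1)^{i-1}\bigl[i e_i-\sum_{j=1}^{i-1}(-1)^{j-1}e_{i-j}p_j\bigr]$ establishes. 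Conceptually the two arguments are close (the paper's row operations amount to left multiplication by $(d\phi)^{-1}$), but your packaging has real advantages: it replaces the paper's somewhat informal iterative reduction (``continuing in this way\dots'') with a single, cleanly stated triangularity claim proved by induction, and it makes every constant and sign transparent. Your closing caveats are exactly the right ones: the key point to check is that $p_i$ involves only $e_1,\ldots,e_i$ with $e_i$ appearing linearly with \emph{constant} coefficient $(-1)^{i-1}i$ (which also guarantees $\det(d\phi)$ is a constant, so evaluating it at $f(x)$ costs nothing), and the cancellation of $n!$ is an identity of polynomials, legitimate because the paper works in characteristic $0$ — note that the paper's own proof likewise divides by the integers $1,\ldots,n$, so neither argument is more demanding on the ground field.
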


\begin{proof} 
The proof is to show by applying row operations involving multiplying rows by $-1$
and adding multiples of one row to another
that one obtains the Vandermonde matrix.

Row $n$ of the matrix $df$ is the gradient of $e_n$, namely $\nabla(e_n)$. Using Newton's identity we can rewrite this as
$\frac{1}{n}\nabla(\sum_{j=1}^n(-1)^{j-1}e_{n-j}p_j)=\frac{1}{n}\sum_{j=1}^n(-1)^{j-1}(\nabla(e_{n-j})p_j+e_{n-j}\nabla(p_j))$.
Using row operations, we can (up to row equivalence) clear out the terms $\nabla(e_{n-j})p_j$ since these are multiples 
of rows $\nabla(e_i)$ higher up in the matrix. We can do the same now for row $n-1$, and then row $n-2$, etc.

Afterward, row 1 is unchanged (it is $\nabla(e_1)=\nabla(p_1)=x_1+\cdots+x_n$), but row 2 is 
$\frac{1}{2}(e_1\nabla(p_1)-\nabla(p_2))$, so we can use row 1 to clear the term $e_1\nabla(p_1)$ so that row 2 becomes
$-\nabla(p_2)$, at which point we can use rows 1 and 2 to clear row 3 so that row 3 becomes $\nabla(p_3)$.
Continuing in this way we eventually obtain a matrix row equivalent to $df$ whose rows are
$\nabla(p_1)=(1,\ldots,1)$, $-\frac{1}{2}\nabla(p_2)=-(x_1,\cdots,x_n)$, $\ldots$, $(-1)^{n-1}\frac{1}{n}\nabla(p_n)=(-1)^{n-1}(x_1^{n-1},\cdots,x_n^{n-1})$. 
Up to sign, this is the Vandermonde matrix, whose determinant is well known to be as claimed.
\end{proof}

Let $Z\subset\PP^2$ be a finite set of points admitting an unexpected curve $C$ of degree $m+1$ with a general point $P=[a_0:a_1:a_2]$
of multiplicity $m$. Let $F\in S$ be the form defining $C$ over the field $\KK(\frac{a_1}{a_0},\frac{a_2}{a_0})$. 
Examples suggest that $F^*$ is bi-homogeneous of bi-degree $(m,m+1)$, but our proof is restricted to the case that
the line arrangement dual to $Z$ is free. 
In forthcoming work, W.\ Trok \cite{Tr} establishes a more general result on the bi-degree using different methods.

\begin{theorem}\label{BMSSDualityThm}
Let $Z\subset\PP^2$ be a finite set of points admitting an irreducible unexpected curve $C=C_P$ of degree $m+1$ with a general point $P=[a_0:a_1:a_2]$
of multiplicity $m$.  
\begin{enumerate}
\item[(a)] The curve $C_P$ is unique.

\item[(b)] Let $F(a,x)\in S=\KK\big(\frac{a_1}{a_0},\frac{a_2}{a_0}\big)[x_0,x_1,x_2]$ 
be the form defining $C$ over the field $\KK(\frac{a_1}{a_0},\frac{a_2}{a_0})$.
Assume that the lines dual to the points of $Z$ comprise a free line arrangement. Then $F^*(a,x)$ is bi-homogeneous of bi-degree $(m,m+1)$.
Furthermore, viewing $F^*(a,x) \in \KK[x_0,x_1,x_2][a_0,a_1,a_2]$, $F^*(a,x)$ has multiplicity $m$ in the 
$a$ variables at the general point $[x_0:x_1:x_2]$ (briefly we will say $F^*(a,x)$ has a point of multiplicity $m$ in the $a$ variables at $a=x$).

\item[(c)] Assume that $F\in R=\KK[a_0,a_1,a_2][x_0,x_1,x_2]$ is any bi-homogeneous form of bi-degree $(m,m+1)$
such that $F(a,x)$ is reduced and irreducible for a general point $a=P$ and has multiplicity $m$ both in the $a$ variables at $a=x$ and 
in the $x$ variables at $x=a$.
Then $F_P(a,x)=(-1)^mF(x,a)$ is the tangent cone at $x=P$ to the curve $F(P,x)=0$ for $a=P$, where $F_P$ is defined in (\ref{def of HP}).
\end{enumerate}
\end{theorem}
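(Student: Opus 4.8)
The three parts are essentially independent, and I would carry them out in the order (a), (b), (c), keeping in mind that (c) takes the bidegree $(m,m+1)$ and the two multiplicity conditions as hypotheses and so does \emph{not} logically require (b); rather, (b) is what verifies those hypotheses for the form $F^*$ attached to an actual unexpected curve. The genuine content, and the step I expect to be the main obstacle, is the exact control of the $a$-degree in (b). Parts (a) and (c) are comparatively formal: (a) is Bézout plus a point count, and (c) is a differential-calculus identity.

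For (a) I would argue by Bézout. Suppose $C_1\ne C_2$ are two irreducible unexpected curves of degree $m+1$ with a point of multiplicity $m$ at the general point $P$, both through $Z$. Being irreducible of the same degree and distinct, neither contains the other, so they share no component and Bézout gives $(m+1)^2=C_1\cdot C_2\ge m^2+|Z|$, since the intersection multiplicity at $P$ is at least $m^2$ and each point of $Z$ contributes at least $1$; hence $|Z|\le 2m+1$. I would then observe that a point set supporting such an unexpected curve must be strictly larger than this: in the superabundant regime $\dim[I_Z]_{m+1}\le\binom{m+1}{2}$ the bound $\dim[I_Z]_{m+1}\ge\binom{m+3}{2}-|Z|$ already forces $|Z|\ge 2m+3$, and the remaining range is governed by the dimension count $\dim[I_Z\cap I_P^m]_{m+1}\le 1$ of \cite{CHMN}. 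This contradiction yields $\dim[I_Z\cap I_P^m]_{m+1}=1$, i.e.\ uniqueness.

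For (b), Lemma \ref{StEx}(d) already gives that $F^*$ is bi-homogeneous of bidegree $(s,m+1)$ with $s\ge m$ and that $F^*\in J_{mP}$, which is precisely the statement that $F^*$ vanishes to order $\ge m$ in the $a$-variables at $a=x$. Thus everything reduces to the single inequality $s\le m$, after which $\deg_a F^*=m$ forces the order of vanishing in $a$ to be exactly $m$. To prove $s=m$ I would identify $\deg_a F^*$ with the generic splitting type, along a line, of the syzygy bundle of the Jacobian ideal $J_F$ of the product $F$ of the linear forms defining the arrangement $\mathcal{A}$ dual to $Z$: the coefficients of $F^*$, read as forms in $a$, are exactly the sections that witness the unexpected curve, and their degree is read off from that splitting. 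When $\mathcal{A}$ is free, this bundle is a direct sum of line bundles, so its global and generic splitting types coincide, and this rigidity pins $\deg_a F^*$ to the value $m$. This is where freeness is indispensable and is the crux of the theorem; in the non-free case the splitting type can jump, and controlling $s$ there is exactly what Trok \cite{Tr} achieves by other methods.

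Part (c) is then the cleanest step. Fixing the bidegree $(m,m+1)$ and the two multiplicity conditions, formula (\ref{def of HP}) with $H=F(P,x)$ and monomials $\mu$ of degree $m$ in $x$ gives
\[
F_P(x)=\sum_{|\mu|=m}\frac{[\partial_x^\mu F](P,P)}{\mu!}\,x^\mu ,
\]
while expanding the degree-$m$ form $F(x,P)$ in monomials gives $F(x,P)=\sum_{|\mu|=m}\frac{[\partial_a^\mu F](P,P)}{\mu!}\,x^\mu$. Hence $F_P=(-1)^m F(x,P)$ is equivalent to the pointwise identity $[\partial_x^\mu F](P,P)=(-1)^m[\partial_a^\mu F](P,P)$ for all $|\mu|=m$. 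To prove this I would pass to the affine chart $a_0=x_0=1$ around the general point $P$, which is legitimate since both sides define cones with vertex $P$ and such a cone is determined in this chart. There, because $\deg_a F=m$ equals the order of vanishing of $F$ in $a$ at $a=x$, the form $F$ is exactly homogeneous of degree $m$ in the shifted variable $(a-x)$, so its Taylor expansion collapses to $F=\sum_{|\mu|=m}\frac{1}{\mu!}c_\mu(x)(a-x)^\mu$ with $c_\mu(x)=[\partial_a^\mu F](x,x)$. Differentiating $m$ times in $a$ and setting $a=x$ recovers $c_\nu$, whereas differentiating $m$ times in $x$ and setting $a=x$ forces every derivative onto the factor $(a-x)^\mu$ (a derivative landing on $c_\mu$ leaves a surviving power of $(a-x)$, which vanishes on the diagonal), producing the sign $(-1)^m$; this gives the desired identity at $P$. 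The multiplicity-$m$-in-$x$ hypothesis is used only to guarantee that $F(P,x)$ really has a point of multiplicity $m$ at $x=P$, so that (\ref{def of HP}) computes an honest degree-$m$ tangent cone, and reducedness and irreducibility ensure $F(P,x)=0$ is a curve whose tangent cone this is.
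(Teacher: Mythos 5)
The main problem is part (b), and it is a genuine gap. You correctly reduce everything to the single inequality $\deg_a F^*\le m$, and you correctly locate where freeness must enter: for a free arrangement the global and generic splitting types of the syzygy (derivation) bundle coincide, so the Jacobian ideal of the product $\Lambda$ of the dual linear forms has a global syzygy $(s_0,s_1,s_2)$ of degree exactly $m$. But your key sentence --- that the coefficients of $F^*$ ``are exactly the sections that witness the unexpected curve, and their degree is read off from that splitting'' --- is an assertion of the conclusion, not an argument. The witnessing sections live on the dual line $\ell_P$ (via the identification of $[I_{Z+mP}]_{m+1}$ with sections of the restricted bundle), whereas $F^*$ is the output of an elimination, and nothing formal transfers the degree of the syzygy to the $a$-degree of the implicit equation: eliminating naively (Cramer's rule on the condition matrix, as in the paper's own cone example where $\det\Gamma^*=C(a)\,F^*$) produces coefficient forms of $a$-degree roughly $m^2$, together with an extraneous factor $C(a)$, and the entire difficulty is to show that this factor absorbs all but degree $m$. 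That implicitization is exactly what the paper's proof of (b) consists of: it forms $\phi=(s_0,s_1,s_2)\times(x_0,x_1,x_2)$, uses the fact from \cite{CHMN} that $\phi$ maps $\ell_P$ birationally onto $C_P$, matches this parameterization (after divisibility reductions and a coordinate change) with the slope parameterization coming from the blow-up of $P$, and implicitizes to obtain equations of bi-degree $(m,m+2)$ of which $F^*$ is a divisor, whence $\deg_a F^*\le m$. Some argument of this kind is unavoidable, and your proposal does not contain one; as written, (b) restates what is to be proved.

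Part (a) is logically sound but is not the independent argument it appears to be. Once two distinct curves are assumed, your Bézout bound $|Z|\le 2m+1$ forces $\dim[I_Z]_{m+1}\ge\binom{m+3}{2}-(2m+1)=\binom{m+1}{2}+2$, so your ``superabundant regime'' can never occur, and the whole contradiction rests on the cited bound $\dim[I_Z\cap I_P^m]_{m+1}\le 1$ from \cite{CHMN}, which for an irreducible unexpected curve is precisely the uniqueness statement. In substance, then, your (a) is the same citation the paper makes; the Bézout step is correct but carries no weight.

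Part (c), by contrast, is correct and genuinely different from (and simpler than) the paper's treatment. The paper fixes a general $x$, factors $F$ in the $a$-variables into $m$ lines, needs the roots $h_i$ to be distinct (this is where irreducibility enters), and needs Lemma \ref{diffMap} plus the inverse function theorem to make the $h_i$ holomorphic in $x$ before computing tangent lines branch by branch. Your argument --- that in an affine chart the hypotheses $\deg_a F=m$ and multiplicity $m$ at $a=x$ force $F$ to be exactly homogeneous of degree $m$ in $a-x$, so that $m$-fold differentiation in $x$ along the diagonal equals $(-1)^m$ times $m$-fold differentiation in $a$ --- proves $F_P(a,x)=(-1)^mF(x,a)$ directly, with no analytic input and without using reducedness or irreducibility at all. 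Two routine points should be added: the identity must also cover derivatives involving $x_0$ in order to recover the projective formula (\ref{def of HP}) (either note that degree-$m$ forms are determined by their dehomogenizations, or expand $F$ as a sum of products of $m$ minors $a_ix_j-a_jx_i$ times $x$-linear forms, for which the sign $(-1)^m$ is visible for all derivatives at once); and your route loses the paper's byproduct that the linear factors of $F(x,a)$ are the tangent lines to the branches of $C_P$ at $P$, which the paper uses in its surrounding discussion of BMSS duality.
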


\begin{proof}
(a) Since $C$ is irreducible, then \cite{CHMN} shows that $C$ is unique.

(b) By Lemma \ref{StEx}(d) we know that $F^*$ is bi-homogeneous of bi-degree $(m',m+1)$
for some $m'\geq m$ and that $F^*$ has multiplicity at least $m$ in the $a$ variables at $a=x$.
The conclusion follows from examining a parameterization given in \cite{CHMN} to conclude that $m'\leq m$.
Let $\Lambda$ be the product of the forms defining the lines dual to the points of $Z$.
It is not hard to check that there are no unexpected curves for $Z$ with $|Z|<3$, so
after a change of coordinates if need be, we may assume $x_0x_1$ divides $\Lambda$.
Let $(s_0,s_1,s_2)$ be a syzygy of minimal degree (hence homogeneous of degree $m$;
see \cite{CHMN}) for the ideal $(\Lambda_{x_0},\Lambda_{x_1},\Lambda_{x_2})$,
where $\Lambda_{x_i}$ denotes the partial with respect to $x_i$; thus
$$s_0\Lambda_{x_0}+s_1\Lambda_{x_1}+s_2\Lambda_{x_2}=0.$$
Define $\phi$ formally to be the vector whose components are given by the cross
product 
$$\phi=(\phi_0,\phi_1,\phi_2)=(s_0,s_1,s_2)\times (x_0,x_1,x_2).$$

Now let $\ell$ be the general line $a_0x_0+a_1x_1+a_2x_2=0$.
If $a_2\neq0$, we can parameterize $\ell$ by $(ta_2,-a_2s,a_1s-a_0t)$,
where $(t,s)$ are projective coordinates on $\PP^1$,
and by \cite{CHMN} $\phi$ defines a birational map from $\ell$ to $C$.

Note that, since $x_0x_1$ divides $\Lambda$, then $x_0$ divides $\Lambda_{x_1}$ and
$\Lambda_{x_2}$ and $x_1$ divides $\Lambda_{x_0}$ and $\Lambda_{x_2}$, but
$x_0$ does not divide $\Lambda_{x_0}$ and $x_1$ does not divide $\Lambda_{x_1}$.
Setting $x_0=0$ in $s_0\Lambda_{x_0}+s_1\Lambda_{x_1}+s_2\Lambda_{x_2}=0$ gives
$$s_0\Lambda_{x_0}=0$$
so $x_0$ must divide $s_0$. Similarly, $x_1$ divides $s_1$.

Plugging $(x_0,x_1,x_2)=(ta_2,-a_2s,a_1s-a_0t)$ into $\phi(x_0,x_1,x_2)$ we see from the definition of the cross product
that $a_2$ divides $\phi_2(ta_2,-a_2s,a_1s-a_0t)$, and since $x_i|s_i$ for $i=0,1$, we get that 
$x_i$ divides $\phi_i$ for $0,1$ and thus after the substitution $(x_0,x_1,x_2)=(ta_2,-a_2s,a_1s-a_0t)$ 
that $a_2$ divides $\phi_i$ for $i=0,1,2$. Let $\psi_i=\phi_i/a_2$. Then
$\psi_i$ is bi-homogeneous of degree $m-1$ in the $a$ variables and degree $m$ in the
variables $s$ and $t$.

We also get a parameterization of $C$ using the slope of lines through the point $P=a$. This induces an isomorphism
from $E_P$ to $C$, where $E_P$ is the blow up of $P$. But $\psi=(\psi_0,\psi_1,\psi_2)$ induces a birational map
$\ell$ to $C$. Composing gives an isomorphism $\ell$ to $E_P$, which is therefore linear.
Thus after a change of coordinates fixing $P$ and $x_2=0$, we may assume that
the parameterization of $C$ given by $\psi$ is the same as that given by slopes of lines
through $P$. In particular, consider the line through the point $(a_1/a_2,a_1/a_2,1)$ with slope $t/s$; in affine coordinates it is
$$\frac{x_1}{x_2}-\frac{a_1}{a_2}=\frac{t}{s}\Big(\frac{x_0}{x_2}-\frac{a_0}{a_2}\Big)$$
which we can rewrite as $(x_1a_2-a_1x_2)s=t(x_0a_2-a_0x_2)$. Setting $x_2=0$ gives the coordinates
$a_2t=x_1, sa_2=x_0$ as the parameterization on $E_P$, and we can assume that the parameterization
of $C$ given by mapping the point $(t,s)$ on $E_P$ to the point $(x_0:x_1:x_2)$ where the 
line $(x_1a_2-a_1x_2)s=t(x_0a_2-a_0x_2)$ meets $C$ (away from $P$) is the same point of $C$ as that given by
$\psi(ta_2,-a_2s,a_1s-a_0t)$. 

Thus the parameterization is such that 
$$[x_0:x_1:x_2]=[\psi_0(ta_2,-a_2s,a_1s-a_0t):\psi_1(ta_2,-a_2s,a_1s-a_0t):\psi_2(ta_2,-a_2s,a_1s-a_0t)],$$ 
and hence plugging $s=x_0a_2-a_0x_2$ and $t=x_1a_2-a_1x_2$ 
in to $(ta_2,-a_2s,a_1s-a_0t)$ gives $(a_2(x_1a_2-a_1x_2),-a_2(x_0a_2-a_0x_2),a_1x_0a_2-a_0x_1a_2)$, and factoring out the $a_2$ we get
$(x_1a_2-a_1x_2,-(x_0a_2-a_0x_2),a_1x_0-a_0x_1)$, each component of which has bi-degree $(1,1)$.
So we now have parametric equations for $C$,
$$[x_0:x_1:x_2]=[\psi_0(v):\psi_1(v):\psi_2(v)],$$
where $v=(x_1a_2-a_1x_2,-x_0a_2+a_0x_2,a_1x_0-a_0x_1)$, so each component of the right hand side has bi-degree $(m,m+1)$.
We get the following equations for $C$ (bi-homogeneous of degree $(m,m+2)$):
$x_0\psi_1-x_1\psi_0=0$, $x_0\psi_2-x_2\psi_0=0$, $x_1\psi_2-x_2\psi_1=0$.
The form $F^*(a,x)$ defining $C$ is a common divisor of these three equations. Thus 
$F^*$ is bi-homogeneous of bi-degree $(m',m+1)$
for some $m'\leq m$. Thus $F^*$ is bi-homogeneous of bi-degree $(m,m+1)$,
and since it has a point of multiplicity at least $m$ at $a=x$ but degree exactly $m$,
the multiplicity is also $m$.

(c) Since $F$ has degree $m$ in the $a$ variables, $F$ defines a curve
of degree $m$ with a point of multiplicity $m$ at $a=x$, hence it defines a union of $m$ lines through the
point $a=x$. 

The question remains as to what these lines are. To answer this question, 
translate the point $a=x$ to the point $[1:0:0]$. Doing this gives us
$$G(a)=a_0^mF\big(1,\frac{a_1}{a_0}+\frac{x_1}{x_0},\frac{a_2}{a_0}+\frac{x_2}{x_0},1,\frac{x_1}{x_0},\frac{x_2}{x_0}\big)
\in\KK\Big[\frac{x_1}{x_0},\frac{x_2}{x_0}\Big][a_0,a_1,a_2].$$
In the $a$ variables, this defines $m$ lines meeting at $[1:0:0]$.
Thus in the $a$ variables $G(a)$ 
is a homogeneous form of degree $m$, but now
the variable $a_0$, which we can regard as defining the line at infinity,
does not appear. 
Note that neither $a_1$ nor $a_2$ is a factor of $G(a)$, since 
if, say $a_1$ were a factor of $G$, then for all choices of $x$,
$G=0$ includes the line from $[1:0:0]$ to $[0:0:1]$. Since the point
$[0:0:1]$ is at infinity, it is fixed under the translation we employed,
so $F(a_0,a_1,a_2,x_0,x_1,x_2)=0$ includes the line from $[x_0:x_1:x_2]$ to $[0:0:1]$, hence
$x_1a_0-x_0a_1$ is a factor of $F$, contradicting the assumption that
$C$ is irreducible. 

Since $a_0$ does not appear in $G(a)$ and $a_1$ and $a_2$ are not factors,
we have $G(a)=b_0a_1^m+\cdots+b_ma_2^m$ with $b_i\in\KK\big[\frac{x_1}{x_0},\frac{x_1}{x_0}\big]$
and $b_0,b_m\neq0$. Setting $a_2=1$ in $G$ gives
$g(a_1)=b_0a_1^m+\cdots+b_m$ and
dividing by $b_0$ gives $h(a_1)=g(a_1)/b_0\in\KK(\frac{x_1}{x_0},\frac{x_1}{x_0})[a_1]$.
Over an appropriate field extension of $\KK(\frac{x_1}{x_0},\frac{x_1}{x_0})$ this factors
as $h(a_1)=(a_1-h_1)\cdots(a_1-h_m)$. 
Note that the elements $h_i$ are distinct. If not, then $h$ and hence $g$
has a multiple root, say $h_i$, hence $a_1-h_i$ is a common factor
of $g$ and $g'$ (or, taking more derivatives, of $g^{(\mu-1)}$ 
and $g^{(\mu)}$ if the root has multiplicity $\mu>1$)
and thus can be found using the Euclidean algorithm.
Thus the linear factor $a_1-h_i$ is in $\KK(\frac{x_1}{x_0},\frac{x_1}{x_0})[a_1]$ and so 
divides $g$ over $\KK(\frac{x_1}{x_0},\frac{x_1}{x_0})[a_1]$ and hence $a_1-h_ia_2$ divides $G(a)$ 
in $\KK\big[\frac{x_1}{x_0},\frac{x_1}{x_0}\big][a_0,a_1,a_2]$ 
and thus $F(a,x)$ as before has a corresponding factor linear in $a$, contrary to assumption.
Thus the roots $h_i$ are distinct.

Since by Lemma \ref{diffMap} 
the mapping $f$ expressing the coefficients of $h$ as symmetric functions of the roots $h_i$ has
an invertible differential $df$
(the roots being distinct), we can by the inverse function theorem \cite[p.\ 33]{FG} regard the roots
$h_i$ as holomorphic functions of the coefficients $b_i/b_0$ of $h$. Since each 
$b_i/b_0$ is a rational (and so holomorphic) function of $x_1/x_0$ and $x_2/x_0$, we can regard the
$h_i$ as holomorphic functions of $x_1/x_0$ and $x_2/x_0$.

The roots $h_i$ give the points at infinity where $G(a,x)$ vanishes. Specifically these are
$[0:h_i:1]$. These are unaffected by affine translations, so the points where $F(a,x)$ 
vanishes on $a_0=0$ are these same points $[0:h_i:1]$. 
Thus translating back we see $F(a,x)$ vanishes on the lines through $[x_0:x_1:x_2]$ and $[0:h_i:1]$,
so the forms defining these lines divide $F(a,x)$.
Specifically, we have $G(a)=b_0(a_1-h_1a_2)\cdots(a_1-h_ma_2)$ so
$$b_0\Big(\frac{a_1}{a_0}-h_1\frac{a_2}{a_0}\Big)\cdots\Big(\frac{a_1}{a_0}-h_m\frac{a_2}{a_0}\Big)=G(a)/a_0^m=
F\Big(1,\frac{a_1}{a_0}+\frac{x_1}{x_0},\frac{a_2}{a_0}+\frac{x_2}{x_0},1,\frac{x_1}{x_0},\frac{x_2}{x_0}\Big)$$
translates back to
$$F(a,x)=x_0^{m+1}a_0^mb_0\Big(\frac{a_1}{a_0}-\frac{x_1}{x_0}-\big(\frac{a_2}{a_0}-\frac{x_2}{x_0}\big)h_1\Big)\cdots
\Big(\frac{a_1}{a_0}-\frac{x_1}{x_0}-\big(\frac{a_2}{a_0}-\frac{x_2}{x_0}\big)h_m\Big)=$$
$$x_0b_0\big((a_1x_0-a_0x_1)-(a_2x_0-a_0x_2)h_1\big)\cdots
\big((a_1x_0-a_0x_1)-(a_2x_0-a_0x_2)h_m\big)=$$
$$b_0x_0\left|\begin{array} {ccc}
0 & h_1(x)  & 1\\
x_0 & x_1 & x_2\\
a_0 & a_1 & a_2\\
\end{array}
\right|\cdots
\left|\begin{array} {ccc}
 0 & h_m(x)  & 1\\
x_0 & x_1 & x_2\\
a_0 & a_1 & a_2\\
\end{array}
\right|.$$

Consider a point $x=a$ where $b_0x_0\neq0$. Then the branches of $F(a,x)=0$ in a neighborhood of $x=a$
are defined by the vanishing of each factor $\lambda_i(x_0,x_1,x_2)=\left|\begin{array} {ccc}
0 & h_i(x)  & 1\\
x_0 & x_1 & x_2\\
a_0 & a_1 & a_2\\
\end{array}
\right|$. The tangent line to the branch $\lambda_i(x)=0$ at $x=a$ is defined by
$((\nabla \lambda_i)|_{x=a})\cdot (x_0,x_1,x_2)$.
Since $\lambda_i=(a_1-a_2h_i)x_0-a_0x_1+a_0x_2h_i$, the gradient $\nabla \lambda_i$ is
$$\Big((a_1-a_2h_i),-a_0,a_0h_i\Big)+\Big(-a_2h_{i0}x_0+a_0x_2h_{i0},-a_2h_{i1}x_0+a_0x_2h_{i1},-a_2h_{i2}x_0+a_0x_2h_{i2}\Big).$$
The second term vanishes at $x=a$ so 
$$(\nabla \lambda_i)|_{x=a}\cdot(x_0,x_1,x_2)=(a_1-a_2h_i(x),-a_0,a_0h_i(x))|_{x=a}\cdot(x_0,x_1,x_2)=\left|\begin{array} {ccc}
0 & h_i(a)  & 1\\
x_0 & x_1 & x_2\\
a_0 & a_1 & a_2\\
\end{array}
\right|.$$

So we see that the tangent cone to $F(a,x)=0$ at a point $P=a$ such that $b_0x_0\neq0$ is defined by the vanishing of
$$F_P(a,x)=b_0(a)a_0\left|\begin{array} {ccc}
0 & h_1(a)  & 1\\
x_0 & x_1 & x_2\\
a_0 & a_1 & a_2\\
\end{array}
\right|\cdots
\left|\begin{array} {ccc}
 0 & h_m(a)  & 1\\
x_0 & x_1 & x_2\\
a_0 & a_1 & a_2\\
\end{array}
\right|=$$
$$(-1)^mb_0(a)a_0\left|\begin{array} {ccc}
0 & h_1(a)  & 1\\
a_0 & a_1 & a_2\\
x_0 & x_1 & x_2\\
\end{array}
\right|\cdots
\left|\begin{array} {ccc}
 0 & h_m(a)  & 1\\
a_0 & a_1 & a_2\\
x_0 & x_1 & x_2\\
\end{array}
\right|=(-1)^mF(x,a).$$
\end{proof}

We can now extend our result to unexpected curves that are unique but not necessarily irreducible.

\begin{corollary}\label{BMSSDualityCor}
Let $Z\subset\PP^2$ be a finite set of points admitting a unique unexpected curve $C$ of degree $m+1$ with a general point $P=[a_0:a_1:a_2]$
of multiplicity $m$. Let $F(a,x)\in S$ be the form defining $C$ over the field $\KK(\frac{a_1}{a_0},\frac{a_2}{a_0})$ and assume
$F^*$ is bi-homogeneous of bi-degree $(m,m+1)$.
Then $(F^*)_P(a,x)=(-1)^mF^*(x,a)$ defines the tangent cone to $C$ at $P$.
\end{corollary}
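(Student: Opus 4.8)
The plan is to deduce the reducible case from the irreducible statement of Theorem~\ref{BMSSDualityThm}(c) by factoring $F^*$ into geometrically irreducible factors, exploiting that both operations appearing in the asserted identity are multiplicative: passage to the tangent cone $(\,\cdot\,)_P$ of (\ref{def of HP}) (the lowest-degree term of a product is the product of the lowest-degree terms, and multiplicities at $P$ add), and the swap $G(a,x)\mapsto G(x,a)$. Thus if $F^*=\prod_i G_i$ with each $G_i$ bi-homogeneous and one proves the componentwise duality $(G_i)_P=(-1)^{m_i}G_i(x,a)$, where $m_i$ is the multiplicity at $P$ of the component cut out by $G_i$, then
\[
(F^*)_P=\prod_i (G_i)_P=\prod_i(-1)^{m_i}G_i(x,a)=(-1)^{\sum_i m_i}F^*(x,a)=(-1)^m F^*(x,a),
\]
using $\sum_i m_i=\mathrm{mult}_P C=m$. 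So the whole task becomes a componentwise verification.

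First I would pin down the shape of the factorization. Writing $C=\bigcup_i C_i$ as its decomposition into absolutely irreducible components, with $d_i=\deg C_i$ and $m_i=\mathrm{mult}_P C_i$, we have $\sum_i d_i=m+1$ and $\sum_i m_i=m$, hence $\sum_i(d_i-m_i)=1$. As $d_i\ge m_i$ for every plane curve, exactly one component is ``special'' with $d_i-m_i=1$, and all others satisfy $d_i=m_i$. An absolutely irreducible curve whose multiplicity at a point equals its degree is forced to be a line through that point, so every non-special component is a line $\ell_j$ through $P$. The special component is either an honest curve $C_0$ through $P$ with $\deg C_0=\mathrm{mult}_P C_0+1\ge 2$, or (the degenerate possibility $m_0=0$) a single fixed line $L$ not passing through $P$. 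Correspondingly $F^*=G_0\cdot\prod_j \ell_j$ up to a scalar, where $G_0=F_0^*$ or $G_0=L$; a nonzero scalar and any field extension needed to split conjugate lines are harmless, since the target identity is between $S$-rational forms and may be checked over $\bar S$.

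Next I would verify the componentwise duality for each type. For a line $\ell_j$ through $P=a$, its defining form is bilinear and vanishes on the diagonal $a=x$; the space of such forms consists precisely of the antisymmetric ones, so $\ell_j(a,x)=\det\left(\begin{smallmatrix} x_0&x_1&x_2\\ a_0&a_1&a_2\\ q_0&q_1&q_2\end{smallmatrix}\right)$ for a fixed $Q_j=[q_0:q_1:q_2]$, whence $\ell_j(x,a)=-\ell_j(a,x)$; since a line is its own tangent cone, $(\ell_j)_P=\ell_j(a,x)=(-1)^1\ell_j(x,a)$, matching $m_j=1$. For a possible fixed line $L=L(x)$ with $P\notin L$ we have $m_0=0$, and (\ref{def of HP}) evaluates the constant term $L(x)$ at $x=a$, giving $L_P=L(a)=(-1)^0L(x,a)$. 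For the honest special component $C_0$ I would check the hypotheses of Theorem~\ref{BMSSDualityThm}(c): $F_0^*$ is reduced and absolutely irreducible of $x$-degree $m_0+1$; its $a$-degree equals $m-k=m_0$, where $k=m-m_0$ is the number of lines $\ell_j$ (each of $a$-degree $1$) and the total $a$-degree of $F^*$ is $m$ by assumption; and its multiplicity in the $a$ variables at $a=x$ is likewise $m_0$, since Lemma~\ref{StEx}(d) together with the bi-degree hypothesis forces the diagonal multiplicity of $F^*$ to be exactly $m$ while each $\ell_j$ contributes $1$. Theorem~\ref{BMSSDualityThm}(c) then gives $(F_0^*)_P=(-1)^{m_0}F_0^*(x,a)$, and multiplying the factors yields the corollary.

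The main obstacle is the bookkeeping that the assumed bi-degree $(m,m+1)$ is exactly minimal, so that the inequalities in play — $d_i\ge m_i$, the ``$\ge m$'' of Lemma~\ref{StEx}(d), and $\deg\ge\mathrm{mult}$ in both sets of variables — all become equalities and distribute additively across the factorization; this is what lets Theorem~\ref{BMSSDualityThm}(c) apply to $C_0$ with the correct bi-degree. The other delicate point is the degenerate case of a fixed component $L$ not through $P$, which at first seems to fall outside the componentwise pattern but in fact fits with $m_i=0$ once one notes that the tangent-cone formula returns $L(a)=L(x,a)$. Finally, the multiplicativity of $(\,\cdot\,)_P$ (valid in characteristic $0$ over a domain) and the descent from $\bar S$ to $S$ should be recorded, though both are routine.
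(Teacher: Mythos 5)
Your proposal is correct, and its core coincides with the paper's: split off the line components through $P$, handle them by antisymmetry of bilinear forms vanishing on the diagonal, apply Theorem \ref{BMSSDualityThm}(c) to the one remaining component, and finish by multiplicativity of the tangent cone and of the swap $a\leftrightarrow x$. Where you genuinely differ is the source of the splitting. The paper gets it in one line from the structure theorem of \cite{CHMN}: $C=C'\cup\Lambda_1\cup\cdots\cup\Lambda_r$ with $C'$ an irreducible unexpected curve for a subset $Z'\subseteq Z$ and each $\Lambda_i$ the line joining $P$ to a point of $Z\setminus Z'$; this hands over for free that each line factor is the determinant form in $a$, $x$ and a fixed point (hence bilinear and antisymmetric) and that $G^*$ has bi-degree $(m-r,m-r+1)$, so Theorem \ref{BMSSDualityThm}(c) applies at once. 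You instead reconstruct the decomposition from the numerology $\sum_i(d_i-m_i)=1$, $d_i\ge m_i$, which buys independence from CHMN's structure theory (and reproves the relevant case of it, since your antisymmetric bilinear forms are exactly lines joining $P$ to fixed points), but creates two obligations the paper never faces. First, the numerology needs $\mathrm{mult}_P C=m$ exactly, not $m+1$; you use this silently, and it requires the paper's argument (proof of Theorem \ref{m<d}(i)) that a cone in $\PP^2$ is never unexpected. Second, bilinearity of the moving line components is not automatic — a line through $P=a$ could a priori be cut out by an irreducible factor of higher $a$-degree — and follows only from the equality bookkeeping you defer to your last paragraph: additivity of $\deg_a$ and of the diagonal multiplicity over the factors, the factorwise lower bounds from Lemma \ref{StEx}(c),(d), and $\deg_a F^*=m$ forcing equality throughout. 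Both points are fixable with tools already in the paper, so they are details to write out rather than gaps; with them included, your argument is complete and even covers the degenerate fixed-line case, which the CHMN decomposition rules out but which your setup must confront.
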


\begin{proof}
It is shown in \cite{CHMN} that $C=C'\cup \Lambda_1\cup\cdots\cup \Lambda_r$ where $C'$ is unexpected for a subset
$Z'$ of $Z$ and has degree $m+1-r$ with a general point $P$ of multiplicity $m-r$ where $r=|Z|-|Z'|$
and each $\Lambda_i$ is the line from $P$ to $p_i$ where $p_1,\ldots, p_r$ are the points of $Z$ not in $Z'$.
Thus $F=GL_1\cdots L_r$, where $G$ is the form defining $C'$ and $L_i$ is the bi-linear form defining $\Lambda_i$. 
So we have $F^*=G^*L_1^*\cdots L_r^*$, but $L_i^*=L_i$ and $L_i(a,x)=-L_(x,a)$, so 
$F^*(a,x)=(G^*L_1\cdots L_r)(a,x)=(-1)^m(G^*L_1\cdots L_r)(x,a)$. The tangent cone for $C$ is the tangent cone for $C'$
union with the lines $L_i$, so we also see that $(-1)^mG^*(x,a)L_1^*(x,a)\cdots L_r^*(x,a)=F^*(x,a)$ defines the tangent cone
for $C$.
\end{proof}

\begin{example}\label{QuarticUnexpCurveEx}
The form $F(a,x)$ defining the unexpected quartic curve $C$ for the $B_3$ configuration is
$$F(a,x)=a_2^3x_0^3x_1-a_2^3x_0x_1^3-a_1^3x_0^3x_2+(3a_0a_1^2-3a_0a_2^2)x_0^2x_1x_2+
(-3a_0^2a_1+3a_1a_2^2)x_0x_1^2x_2+a_0^3x_1^3x_2+$$
$$(3a_0^2a_2-3a_1^2a_2)x_0x_1x_2^2+a_1^3x_0x_2^3-a_0^3x_1x_2^3=$$
$$(x_1^3x_2-x_1x_2^3)a_0^3-3x_0x_1^2x_2a_0^2a_1+3x_0^2x_1x_2a_0a_1^2+(-x_0^3x_2+x_0x_2^3)a_1^3+3x_0x_1x_2^2a_0^2a_2$$
$$-3x_0x_1x_2^2a_1^2a_2-3x_0^2x_1x_2a_0a_2^2+3x_0x_1^2x_2a_1a_2^2+(x_0^3x_1-x_0x_1^3)a_2^3.$$
Substituting $a_0=x_0=1$ and restricting to the line $L$ defined by $x_1=a_1$ gives
$$F(1,a_1,a_2,1,a_1,x_2)=a_1(a_1^2-1)(x_2-a_2)^3,$$
while substituting $a_0=x_0=1,a_1=x_1$ gives
$$F(1,x_1,a_2,1,x_1,x_2)=x_1(x_1^2-1)(x_2-a_2)^3.$$
Thus we see that $F$ has a triple point both in terms of the $a$ variables and the $x$ variables.
To see that the factors of $F(x,a)$
are just the lines tangent to the branches of $F(a,x)$,
one can graph $F(a,x)$ and $F(x,a)$ on the same coordinate axes; see Figure \ref{TCFig}.
\end{example}

\begin{figure}
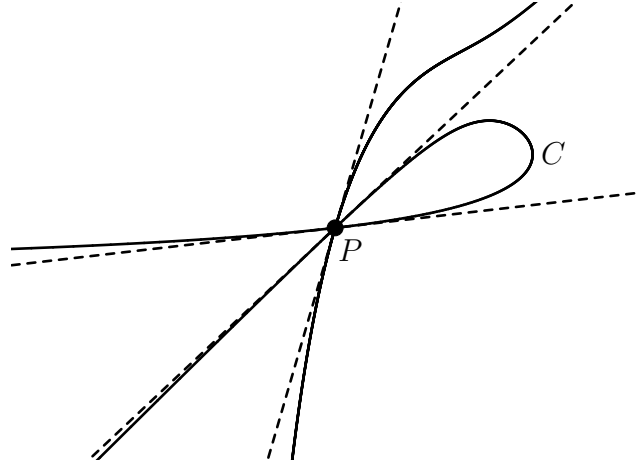

\begin{center}

\caption{The unexpected $B_3$ quartic curve $C$ defined by $F(a,x)=0$ (graphed as a solid line) and 
the graph of $F(x,a)=0$ (dashed lines)
for $P=(a_0,a_1,a_2)=(-6,-5,4)$.}
\label{TCFig}
\end{center}
\end{figure}

\begin{remark}\label{unexpNotFree} The line arrangement of lines dual to the 9 points of $Z$ giving the irreducible unexpected curve in 
Example \ref{QuarticUnexpCurveEx} (i.e., the $B_3$ arrangement of 9 lines) is free.
This raises the question of whether every irreducible unexpected curve comes from a free arrangement.
It is not true that irreducible unexpected curves never come from non-free line arrangements.
For example, consider Figure \ref{20lines} (which comes from \cite[Example 6.2]{CHMN}); 
it gives a non-free arrangement of 19 lines (these 19 being the 18 solid lines shown in the figure
plus the line at infinity). The 19 points dual to these 19 lines gives a set of points $Z$ having an irreducible unexpected
curve of degree 9 with a general point of multiplicity 8. However, by including the dotted line shown in the figure, we obtain 
a free arrangement of 20 lines for which the dual set $Z'$ of 20 points has the same unexpected curve.
\end{remark}

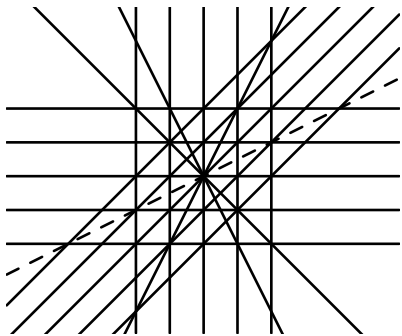
\begin{figure}
\begin{center}
\begin{tikzpicture}[line cap=round,line join=round,>=triangle 45,x=.45cm,y=.45cm]
\clip(-5.82,-4.64) rectangle (5.78,5);
\draw [line width=1.pt] (0.,-4.64) -- (0.,5.24);
\draw [line width=1.pt,domain=-5.82:5.78] plot(\x,{(-0.-0.*\x)/1.});
\draw [line width=1.pt,domain=-5.82:5.78] plot(\x,{(-0.-1.*\x)/1.});
\draw [line width=1.pt,domain=-5.82:5.78] plot(\x,{(-0.-1.*\x)/-1.});
\draw [line width=1.pt,domain=-5.82:5.78] plot(\x,{(-0.-2.*\x)/1.});
\draw [line width=1.pt,domain=-5.82:5.78] plot(\x,{(-0.-2.*\x)/-1.});
\draw [line width=1.pt] (-1.,-4.64) -- (-1.,5.24);
\draw [line width=1.pt] (1.,-4.64) -- (1.,5.24);
\draw [line width=1.pt,domain=-5.82:5.78] plot(\x,{(-1.-0.*\x)/1.});
\draw [line width=1.pt,domain=-5.82:5.78] plot(\x,{(--1.-0.*\x)/1.});
\draw [line width=1.pt] (-2.,-4.64) -- (-2.,5.24);
\draw [line width=1.pt] (2.,-4.64) -- (2.,5.24);
\draw [line width=1.pt,domain=-5.82:5.78] plot(\x,{(-2.-0.*\x)/1.});
\draw [line width=1.pt,domain=-5.82:5.78] plot(\x,{(--2.-0.*\x)/1.});
\draw [line width=1.pt,domain=-5.82:5.78] plot(\x,{(-1.-1.*\x)/-1.});
\draw [line width=1.pt,domain=-5.82:5.78] plot(\x,{(--1.-1.*\x)/-1.});
\draw [line width=1.pt,domain=-5.82:5.78] plot(\x,{(-2.-1.*\x)/-1.});
\draw [line width=1.pt,domain=-5.82:5.78] plot(\x,{(--2.-1.*\x)/-1.});
\draw [line width=1.pt,dash pattern=on 5pt off 5pt,domain=-5.82:5.78] plot(\x,{(-0.-1.*\x)/-2.});
\end{tikzpicture}
\caption{A non-free arrangement of 19 lines (these being the 18 solid lines plus the line $z=0$ at infinity, which is not shown)
which becomes free after adding a line (the dotted line) and such that both arrangements have the same
irreducible unexpected curve.}
\label{20lines}
\end{center}
\end{figure}


\section{Open Problems} \label{open problems}

In this short section we list some open problems stemming from this work.

\begin{enumerate}

\item Suppose $Z$ is a finite set of points which admits an unexpected curve of degree $d=m+1$ having a general point 
$P$ of multiplicity $m$. If the arrangement of lines dual to $Z$ is not free, to what extent does BMSS duality still hold? 

\item  To what extent does BMSS duality hold in higher dimensions? 
For example, it holds for $B_4$ and $F_4$ with $d=m=4$. In these cases the unexpected surfaces are defined by a form 
$F(a,x)$ of bi-degree $(4,4)$ and the form for the tangent cone at $P$ is $F(x,a)$.
It also holds for $D_4$ with $d=m=3$. In this case the unexpected surfaces are defined by a form $F(a,x)$ of bi-degree 
$(3,3)$ and the form for the tangent cone at $P$ is $-F(x,a)$. 

\item  Given an unexpected variety for a finite point set $Z$ having a general point $P$ of multiplicity $m$ and degree $d$, 
let $B_Z(P)$ be the base locus of $[I_{Z+mP}]_d$ and let $B_Z=\cap_P B_Z(P)$, which we can refer to as the base locus associated 
to $Z$. What can be said about this associated base locus? For example, what is its dimension? If it is 0-dimensional, when is it strictly larger than $Z$?

\item  What is special about the root systems having unexpected hypersurfaces? For example, why do the systems $A_{n+1}$ not seem to have any?

\item  For the systems $B_{n+1}$, computational runs suggest there might be unexpected hypersurfaces with $d=m=4$ for all $n\geq 3$ and for $d=m=3$ for all $n\geq5$. How can one prove this? And why only $3\leq m\leq 4$?

\item Let $Z$ be a non-degenerate set of points in linear general position in $\PP^n$, $n \geq 3$. Is it true that there does not exist an unexpected hypersurface of any degree $d$ and multiplicity $d-1$ at a general point?

\item Is there a class of finite sets of points in $\PP^n$ for $n \geq 3$ (or respectively a condition on $(d,m)$) for which the syzygy 
bundle plays a similar role, in the study of unexpected hypersurfaces, to that which it plays when $n=2$ and $m = d-1$, or is that purely a phenomenon for the plane?

\item Let $Z$ be a set of points in $\PP^2$ admitting a (unique) unexpected curve of degree $m+1$ with a general point of multiplicity $m$. 
Then $Z + mP$ does not impose independent conditions on forms of degree $m+1$, but for a suitable subset $Z'$ of $2m+2$ of the points of $Z$, $Z' + mP$ 
does impose independent conditions, and the curve is still unique (but it is no longer unexpected for $Z'$). 

So suppose we consider more generally sets $Z$ of $2m+2$ points such that there is a unique irreducible curve of degree $m+1$ containing $Z$ and having a 
general point of multiplicity $m$ (but not necessarily unexpected). How does the bi-degree of $F^*$ depend on $Z$? Is there a connection between this bi-degree 
and the question of whether $Z$ extends to a set of points for which the curve is unexpected? 

\item Suppose $Z\subset\PP^2$ is a finite set of points having an irreducible unexpected curve where (as in Remark \ref{unexpNotFree})
the arrangement of lines dual to $Z$ is not free. Is it true that there is a finite set $Z'$ with $Z\subset Z'\subset\PP^2$ such that $Z'$ has 
the same unexpected curve as does $Z$ but such that the arrangement of lines dual to $Z'$ is free?

\end{enumerate}


\end{document}